\title{Isometries of complemented sub-Riemannian manifolds}
\author{Robert K. Hladky}
\address{NDSU Mathematics Dept \#2750, PO Box 6050, Fargo ND 58108-6050,USA }
\email{robert.hladky@ndsu.edu}
\newcommand{\aip}[3]{\ensuremath{\langle \, {#1} \, , \, {#2} \, \rangle_{#3}}\xspace}
\newcommand{\pd}[2]{\ensuremath{\frac{\partial {#1}}{\partial {#2}}}\xspace}
\DeclareFontFamily{OT1}{rsfs}{}
\DeclareFontShape{OT1}{rsfs}{m}{n}{ <-7> rsfs5 <7-10> rsfs7 <10-> rsfs10}{} 
\DeclareMathAlphabet{\mathscr}{OT1}{rsfs}{m}{n}
\newcommand{\tor}{\text{Tor}}
\newcommand{\Tor}[1][2]{\text{TOR}_{#1}}
\newcommand{\src}{\ensuremath{\text{Rc}^s}\xspace}
\newcommand{\srm}{\ensuremath{\text{Rm}^s}\xspace}
\newcommand{\lp}{\triangle_H}
\newcommand{\mB}[1][K]{\ensuremath{\mathcal{B}_{#1}}\xspace}
\keywords{Carnot-Carath\'eodory geometry, isometry, Killing field, Ricci curvature, Bochner Formula}
\begin{document}

\begin{abstract}
We show that the group of smooth isometries of a complemented sub-Riemannian manifold form a Lie group and establish dimension estimates based on the torsion of the canonical connection. We explore the interaction of curvature and the structure of isometries and Killing fields and derive a Bochner formula for Killing fields. Sub-Riemannian generalizations of classical results of Bochner and Berger are established. We also apply our theory to common sub-categories of complemented sub-Riemannian geometries and show how to compute the isometry groups for several examples, including $SO(n)$, $SL(n)$ and the rototranslation group. 
\end{abstract}

\maketitle

\section{Introduction}\label{S:ID}

The essential idea of Klein's Erlangen program is to study geometric objects via their groups of self-isometries. While general Riemannian geometry falls outside the boundaries of this program, there are deep and rich interactions between the isometry groups of a Riemannian manifolds and tensorial invariants such as curvature. 

There is a natural notion of isometry in sub-Riemannian geometry. But, in part due to the problematic nature of the exponential map on sub-Riemannian manifolds, there has been little success in studying the group of isometries using analytic tensorial methods.

In a recent paper \cite{Hladky4}, the author introduced a canonical connection for complemented sub-Riemannian manifolds and studied some of the basic properties of its curvature and torsion. Using the language of Killing fields and one-parameter subgroups, this connection can be used to study the isometries  that preserve not only the sub-Riemannian distance but also the complement. Many natural examples of sub-Riemannian geometries come equipped with an inbuilt or natural complement. The isometries of interest in these examples are exactly those that preserve this additional structure.

In this paper, we establish the basic definitions and properties for isometries and Killing fields on complemented sub-Riemannian manifolds. In Section \ref{Iso}, we show that isometries can be determined by information at a single point, establish that $\text{Iso}(M)$ is a finite dimensional Lie group and find crude dimension estimates. In Section \ref{Tor}, we study the considerable effect of torsion on the group of isometries and establish methods to greatly refine the dimension estimate. In Section \ref{Neg}, we first  study the pointwise relationships between Killing fields and curvature.  Then we consider compact manifolds and Bochner formulas for Killing fields. We extend classical results of Bochner and Berger relating Killing fields to Ricci curvature into the  sub-Riemannian category. Finally in Section \ref{Ex} we show how to apply our results to a variety of examples. We show that the Lie group $SO(n)$ with a natural left-invariant, complemented sub-Riemannian structure provides an example of a compact manifold with  isometry group of maximal size.

\section{Basic Definitions}\label{S:BD}

\begin{defn}\label{D:sub-R}
A {\em sub-Riemannian manifold} is a smooth manifold $M$ together with a smooth, bracket generating  subbundle $HM$ of the tangent bundle and a smooth inner product $\aip{}{}{}$ on $HM$.
\end{defn}

It is well known that the sub-Riemannian metric defines an equivalent topology to that of the underlying manifold $M$. We shall always assume that $M$ is complete with respect to the metric induced by the sub-Riemannian distance. From \cite{Strichartz}  Theorem 7.4 , this is equivalent to the completeness with respect to any particular extension of $\aip{}{}{}$ to a full Riemannian metric.

To allow for precise statements concerning the bracket structure of $M$, we define $H^0=HM$ and then inductively define $H^{j+1}_p$ to be the linear hull of $H^j_p$ and all vectors of the form $[X,Z]_p$ with $X \in \Gamma^\infty(H^0)$ and $Z$ a smooth vector field that everywhere is in $H^j$.  The sub-Riemannian manifold $M$ has step size $r$ at $p$ if $H^{r-1}_p = T_pM$ and global step size $r$ if $H^{r-1} =TM$. We say that the sub-Riemannian manifold $M$ is {\em regular} if each $H^j$ is a smooth distribution of constant rank. 

\begin{defn}\label{D:sRC}
A {\em sub-Riemannian manifold with complement (sRC-manifold)} is a sub-Riemannian manifold together with a smooth bundle $VM$ such that $TM = HM \oplus VM$.

An { \em $r$-grading} on an sRC-manifold is a decomposition 
\[ VM = V^1 \oplus \dots \oplus V^r\]
where each $V^i$ is a smooth bundle and we have the bracket conditions 
\begin{align*}
[\Gamma^\infty(HM),\Gamma^\infty(HM)] &\subseteq \Gamma^\infty(HM\oplus V^1)\\ [\Gamma^\infty(HM),\Gamma^\infty(V^i)] &\subseteq \Gamma^\infty(HM \oplus V^1\oplus \dots \oplus V^{i+1}).
\end{align*} 
A {\em regular} sRC-manifold is a regular sub-Riemannian manifold of step size $r+1$ together with an $r$-grading such that
\[ H^j = H^{j-1} \oplus V^{j}\] 
for all $j=1,\dots, r$.
\end{defn}
 
 \begin{rem}\label{R:basic}
 There is only one $1$-grading on any sRC-manifold. We shall refer to this as the basic grading. An sRC-manifold with the basic grading is always regular. 
 
 To simplify notation, we shall also set $V^0=H$ and use $\pi^j$, $j=0,\dots,r$ to represent the projection maps $TM \to V^j$. However, when we are working with the basic grading, we shall continue to use $HM$ and $VM$. The projections of a vector field $A$ will often be denoted $A_H$ and $A_V$.
 \end{rem}

\begin{defn}\label{D:HIsometry} A {\em weak $H$-isometry} between sRC-manifolds is a smooth diffeomorphism $\varphi \colon M \to N$ such  that
\[ \varphi_* HM = HN , \qquad \varphi^* \aip{}{}{N} = \aip{}{}{M}\]
An {\em $H$-isometry} is a weak $H$-isometry such that
\[  \varphi_* VM =VN\]
If both $M$ and $N$ are regular sRC-manifolds of step size $r+1$, a {\em regular $H$-isometry} is an $H$-isometry such that $\varphi_* V_M^j = V^j_N$ for all $j$.

The group of $H$-isometries from an sRC-manifold $M$ to itself will be denoted $\text{Iso}(M)$ with the weak and regular groups denoted by $\text{Iso}^*(M)$ and $\text{Iso}^R(M)$ respectively.

\end{defn}

\begin{rem}
It would certainly be possible to define $H$-isometries under much weaker regularity assumptions than smoothness, although some regularity is necessary for the preservation of the horizontal bundle to make sense. In this paper, however, we shall restrict our attention to the smooth category. 
\end{rem}

\begin{defn}\label{D:1P}
A {\em one-parameter subgroup of (weak/regular) $H$-isometries } on $M$ is a group homomorphism $\Phi$ from $\mathbb{R}^{}$ to the group of (weak/regular) $H$-isometries, such that for all $p\in M$, the map $t \mapsto \Phi(t,p)$ describes a smooth curve.  
\end{defn}

\begin{defn}\label{D:Killing}
A smooth vector field $K$ is a {\em weak  $H$-Killing field} if $ [K,HM]  \subset HM $
and for all sections $X,Y $ of $HM$
\[  K\aip{X}{Y}{} = \aip{[K,X]}{Y}{} + \aip{X}{[K,Y]}{}. \]
 
A vector field $K$ is an {\em $H$-Killing field} if it is a weak $H$-Killing field such that 
$[K,VM]\subset VM$. If $M$ is a regular sRC-manifold, then an $H$-Killing field is {\em regular} if $[K,V^j] \subset V^j$ for all $j$.
\end{defn}

When we need to emphasize the distinction from weak $H$-Killing fields, we shall occasionally refer to $H$-Killing fields as {\em strong} $H$-Killing fields.

\begin{lemma}\label{L:Killing}\hfill

\begin{enumerate}
\item The local flows of a (weak/regular) $H$-Killing field act by (weak/regular) $H$-isometries.
\item Every (weak/regular) $H$-Killing field is complete.
\item The map $\Phi \mapsto  \frac{d}{dt}_{| t=0} \Phi(t,p)$  defines a $1-1$ correspondence between one-parameter families of (weak/regular) $H$-isometries and (weak/regular) $H$-Killing fields.
\end{enumerate}
\end{lemma}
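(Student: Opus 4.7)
The plan is to prove the three claims in sequence, with each building on the previous ones.

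For (1), I would fix a (weak) $H$-Killing field $K$ with local flow $\Phi_t$ and verify the defining properties of an $H$-isometry separately. The invariance $(\Phi_t)_* HM = HM$ follows from the general fact that the flow of a vector field preserves a distribution $\mathcal{D}$ precisely when $[K,\Gamma^\infty(\mathcal{D})]\subset\Gamma^\infty(\mathcal{D})$, which is built directly into the definitions. Preservation of the inner product amounts to recognizing the defining Killing identity as the vanishing of $\mathcal{L}_K\aip{\cdot}{\cdot}{}$ on horizontal arguments; the group property $\Phi_{t+s}=\Phi_t\circ\Phi_s$ then gives $\tfrac{d}{dt}\Phi_t^*\aip{X}{Y}{} = \Phi_t^*\mathcal{L}_K\aip{X}{Y}{} = 0$, so the restricted inner product is preserved for all $t$ in the domain. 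The strong and regular refinements follow by applying the same argument to the bracket condition on $VM$ or on each $V^j$.

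For (2), I would combine local ODE existence with the isometry property from (1) and the completeness of $M$. Fix $p\in M$ and let $(\alpha,\beta)$ be the maximal interval of the integral curve $\gamma(t)=\Phi_t(p)$; assuming $\beta<\infty$, pick a relatively compact neighborhood $U$ of $p$ and $\epsilon>0$ on which the flow is defined, and use the group property together with the fact that each $\Phi_t$ restricts to an $H$-isometry on its domain to propagate this uniform existence time along $\gamma$. Invoking \cite{Strichartz}, Theorem 7.4, sub-Riemannian completeness agrees with Riemannian completeness of any ambient extension, so metric balls are precompact; this forces $\gamma(t)$ to accumulate at a point as $t\to\beta$ where local existence extends the flow past $\beta$, contradicting maximality.

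For (3), the map $\Phi\mapsto K=\tfrac{d}{dt}|_{t=0}\Phi(t,\cdot)$ is visibly well-defined and smooth. Differentiating the isometry conditions $\Phi_t^*HM=HM$, $\Phi_t^*VM=VM$, and $\Phi_t^*\aip{\cdot}{\cdot}{}=\aip{\cdot}{\cdot}{}$ at $t=0$ yields exactly the bracket conditions and the Killing identity, placing $K$ in the appropriate class of Killing fields. Conversely, sending $K$ to its complete flow gives a one-parameter family of $H$-isometries by (1) and (2); the two constructions are mutually inverse by the uniqueness of integral curves with prescribed initial velocity. The principal obstacle will be the completeness argument in (2): unlike the Riemannian case, $\Phi_t$ preserves only the partial metric on $HM$, so one cannot bound $|K|$ along a trajectory via the standard pushforward-invariance trick applied to a full Riemannian metric. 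The argument must instead rely on sub-Riemannian distance invariance together with uniform local existence on a compact neighborhood of the trajectory, with Strichartz's completeness equivalence supplying the needed precompactness.
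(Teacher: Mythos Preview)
Your treatment of (1) and (3) is correct and matches what the paper intends: the paper gives no argument beyond the sentence ``The proof requires only trivial modifications of the Riemannian version, see for example \cite{Petersen} p.188,'' and your reductions to the bracket-preservation criterion for distributions and to the vanishing of $\mathcal L_K\aip{\cdot}{\cdot}{}$ on horizontal arguments are exactly those modifications.

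For (2) you have correctly identified the one place where Petersen's proof does not transfer verbatim --- his argument uses that $|K|_g$ is constant along its own integral curves, and here there is no norm on $K_V$. But your proposed fix has a gap. The step ``propagate this uniform existence time along $\gamma$'' is not justified: knowing that $\Phi$ is defined on $[0,\epsilon]\times U$ and that $(\Phi_s)_*K=K$ does not by itself yield existence for time $\epsilon$ starting from $\gamma(s)$, because transporting the flow box via $\Phi_s$ requires $\Phi_s$ to be defined on all of $\bigcup_{r\in[0,\epsilon]}\Phi_r(U)$, which is exactly what is in doubt as $s\to\beta$. What is actually needed --- and what the paper supplies separately as Lemma~\ref{L:global} --- is a direct bound on $d(\gamma(t),p)$: since each local flow map is an $H$-isometry and hence preserves Carnot--Carath\'eodory distance, one obtains $\sup_{0\le t\le n}d(\gamma(t),p)\le n\cdot\sup_{0\le t\le 1}d(\gamma(t),p)$, so the curve remains in a bounded and, by completeness, precompact set, contradicting the Escape Lemma. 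Your closing sentence lists the right ingredients (distance invariance, precompactness via Strichartz), but the assembly should go through this linear distance bound rather than through a uniform-existence-time propagation.
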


The proof requires only trivial modifications of the Riemannian version, see for example \cite{Petersen} p.188.

In the majority of interesting cases the horizontal bundle $HM$ is strongly non-integrable. This means that typically there will be no purely horizontal $H$-Killing fields.  However, $H$-Killing fields can have horizontal components. For example on Carnot groups, the infinitesimal generators of left translations are $H$-Killing fields.

\begin{exm}\label{X:Heisn}
The n-th Heisenberg group $\mathbb{H}^{n}$ is $\mathbb{R}^{2n+1} =\mathbb{R}^{n}_x \times \mathbb{R}^{n}_y \times \mathbb{R}^{}_t$ with $H\mathbb{H}^{n}$ spanned by the orthonormal basis $X_i = \pd{}{x^i} -\frac{y^i}{2} \pd{}{t}$, $Y_i = \pd{}{y^i} +\frac{x^i}{2} \pd{}{t}$, $i=1,\dots,n$. A complement is defined by choosing $T = \pd{}{t}$.  The group structure
\[ (x,y,t) \cdot (\tilde{x},\tilde{y},\tilde{t}) = \left(x+\tilde{x}, y+\tilde{y}, t+\tilde{t} + \frac{1}{2} (x^i \tilde{y}^i - \tilde{x}^i y^i ) \right) \]
makes $\mathbb{H}^{n}$ a Lie group for which each $X_i,Y_i,T$ is left invariant. Hence the group of left translations is a transitive subgroup of $\text{Iso}(\mathbb{H}^{})$.

 Identify $\mathfrak{u}(n)$ with the space of real $2n \times 2n$ matrices of the form $(A,B)= \begin{pmatrix} A & B \\ -B & A \end{pmatrix}$ with $A$ skew-symmetric and $B$ symmetric. For $(A,B) \in \mathfrak{u}(n)$, it is easy to verify that
\[\begin{split} K_{A,B} & = (A_{ij} x^j + B_{ij} y^j)  X_i + (A_{i j}y^j - B_{i j}x^j ) Y_i  \\
 & \qquad +  \left(  A_{ij} x^j y^i   + \frac{1}{2}   B_{ij} (x^ix^j +y^i y^j) \right) T \end{split} \]
 is an $H$-Killing field. It is well known that these form a basis for the space of $H$-Killing fields on $\mathbb{H}^{n}$ and indeed it will later follow from Corollary \ref{C:psidim} that the Lie algebra of smooth $H$-Killing fields on $\mathbb{H}^{n}$ is isomorphic to $\mathfrak{u}(n)$.

 For the Heisenberg groups, we can actually make a much stronger statement. Let
 \[ K = a^i X_i + b^i Y_i + cT\]
 be  any weak $H$-Killing field. Then it is easy to verify that 
 \[ X_i a^j +X_ja^i =0 = Y_i b^j +Y_j b^i, \qquad X_i b^j +Y_j a^i = 0\]
 and that
 \[ X_i c = -b^i, \qquad Y_i c = a^i.\]
 But then for any fixed $i$, $Tc = [X_i,Y_i]c = X_i a^i + Y_i b^i = 0 $ and therefore
 \[ Ta^i = TY_i c = Y_i Tc =0, \qquad Tb^i  = -TX_i c = -X_i Tc =0.\]
 From this it is easy to see that $[K,T]_H =0$ and so $K$ is actually a strong $H$-Killing field. Thus the Lie algebras of weak and strong $H$-Killing fields agree.

\end{exm}

To compute with $H$-Killing fields we shall use specialized connections associated to an sRC-manifold.  We begin by establishing some notation and terminology.

The following theorem is shown in \cite{Hladky4}.

\begin{thm}\label{T:Connection}
If $g$ is an extension of an $r$-graded sRC-manifold that makes the grading orthogonal, then there exists a unique connection $\nabla$ such that
\begin{itemize}
\item $g$ is metric compatible,
\item $V^j$ is parallel for all $j=0,\dots,r$,
\item $\tor(V^j,V^j)  \cap V^j = \{0\}$ for all $j=0,\dots,r$,
\item If $j\ne k$ and $X,Y \in V^j$ and $Z\in V^k$ then \[g(\tor(Z,X),Y) = g(\tor(Z,Y),X).\]
\end{itemize}
Furthermore if $X,Y $ are horizontal vector fields and $T$ is a vertical vector field then $\nabla X$, $\tor(X,Y)$, $\pi^0 \tor(X,T)$ are all independent of the choice of $g$ and the $r$-grading on $VM$.
\end{thm}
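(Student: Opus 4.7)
The plan is a classical Koszul-style argument: use the four axioms to force pointwise formulas for $\nabla$, establishing uniqueness, and then verify that the formulas actually satisfy all four conditions, establishing existence. Because the grading is orthogonal and each $V^j$ is required to be parallel, the connection decomposes block by block: for $A \in V^k$ and $X \in V^j$, parallelism forces $\nabla_A X \in V^j$, so it suffices to pin down the $V^j$-component of $\nabla_A X$ on such simple pairs.

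When $j = k$, condition (iii) forces $\pi^j\tor(S,T) = 0$ for $S, T \in V^j$, and since $\nabla_S T, \nabla_T S \in V^j$ this reduces to $\nabla_S T - \nabla_T S = \pi^j[S,T]$. Combined with metric compatibility on $V^j$, the standard cyclic Koszul manipulation delivers
\[ 2g(\nabla_S T, U) = Sg(T,U) + Tg(S,U) - Ug(S,T) + g(\pi^j[S,T], U) - g(\pi^j[S,U], T) - g(\pi^j[T,U], S) \]
for $S, T, U \in V^j$. When $j \ne k$, orthogonality of the grading kills $g(\nabla_X A, Y)$ and $g(\nabla_Y A, X)$ in condition (iv) (these land in $V^k \perp V^j$), and what remains rearranges to
\[ g(\nabla_A X, Y) - g(\nabla_A Y, X) = g(\pi^j[A,X], Y) - g(\pi^j[A,Y], X); \]
adding this to metric compatibility on $V^j$ solves uniquely for $g(\nabla_A X, Y)$ with $Y \in V^j$.

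For existence I would take these formulas as definitions and check $C^\infty$-linearity in the first slot and the derivation property in the second; both follow from the structure of the right-hand sides, as the $Ag(\cdot,\cdot)$ terms furnish the product rule and the $g(\pi^j[\cdot,\cdot],\cdot)$ terms are tensorial in the first argument. Metric compatibility and parallelism of each $V^j$ are built into the definitions, and conditions (iii) and (iv) are recovered as the antisymmetric and symmetric rearrangements, respectively, of the defining identities.

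For the invariance claim, one reads from the Koszul formula that when $j = 0$ and $S, T, U \in HM$ the right-hand side involves only the sub-Riemannian inner product on $HM$ and the projection $\pi^0 \colon TM \to HM$ along $VM$, both of which are part of the sRC-with-complement data and hence independent of $g$ and the vertical grading. For vertical $A$ and horizontal $X$, the analogous argument yields
\[ 2g(\nabla_A X, Y) = Ag(X,Y) + g(\pi^0[A,X], Y) - g(\pi^0[A,Y], X) \]
for $Y \in HM$, again phrased purely in sRC-with-complement terms. Hence $\nabla X$ for horizontal $X$ is intrinsic; then $\tor(X,Y) = \nabla_X Y - \nabla_Y X - [X,Y]$ for horizontal $X, Y$ is intrinsic, and $\pi^0\tor(X,T) = -\nabla_T X - \pi^0[X,T]$ for vertical $T$ (using $\nabla_X T \in VM$) is intrinsic. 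The principal obstacle I anticipate is careful bookkeeping in the mixed-block case, both to ensure the orthogonality arguments decouple terms correctly across different $V^k$ and to verify that the block-by-block formulas patch together into a single well-defined global connection rather than an incompatible family.
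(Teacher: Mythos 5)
The paper itself contains no proof of this theorem: it is quoted from \cite{Hladky4}, so your argument has to be judged against that reference rather than against anything in the text. Judged on its own terms, your blockwise Koszul argument is correct and essentially supplies the proof the paper defers. Parallelism of each $V^j$ reduces everything to the blocks $g(\nabla_A X,Y)$ with $X,Y\in V^j$; within a block, condition (iii) plus metric compatibility gives your cyclic Koszul formula, and in the mixed case the orthogonality of the grading kills the $g(\nabla_X Z,Y)$-type terms so that (iv) plus compatibility yields $2g(\nabla_Z X,Y)=Zg(X,Y)+g(\pi^j[Z,X],Y)-g(\pi^j[Z,Y],X)$, which determines the block uniquely. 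Your existence checks are the right ones and do go through: the right-hand sides are tensorial in the differentiating field (the offending terms carry a factor $\pi^j Z=0$), satisfy Leibniz in the differentiated field, are tensorial in $Y$ so they define sections of $V^j$, and the axioms (i)--(iv) are recovered as the symmetric and antisymmetric rearrangements, exactly as you indicate; the blockwise definitions patch by linearity over the graded decomposition of both arguments. The invariance statement also follows as you say: for horizontal $X$ the two relevant formulas involve only the horizontal inner product, the projection $\pi^0$ determined by the splitting $TM=HM\oplus VM$, and the horizontal/vertical parts of the differentiating field (summing your mixed formula over $k\ge 1$ gives the same expression with a general vertical field), none of which see the extension $g$ or the grading of $VM$; intrinsicness of $\tor(X,Y)$ and of $\pi^0\tor(X,T)=-\nabla_T X-\pi^0[X,T]$ then follows. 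One caveat worth making explicit: your uniqueness step reads condition (iii) as $\pi^j\tor(V^j,V^j)=0$. That is the intended meaning (and what your construction delivers), but the literal set-theoretic statement $\tor(V^j,V^j)\cap V^j=\{0\}$ is strictly weaker --- a torsion value with nonzero $V^j$-component is not excluded so long as it also has components in other summands --- and under that literal reading uniqueness would actually fail, so state the strengthened form when you invoke it.
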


\begin{cor}\label{C:Extend}
If the sRC-manifold $M$ is equipped with a regular $r$-grading then for tangent vectors
$T_p \in V_p^j$ and $X_p \in H_p^0$ the projection $\pi^{j+1} \tor(T_p,X_p)$ depends only on $VM$ and the $r$-grading, not  the choice of $g$.
\end{cor}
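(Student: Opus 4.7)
The plan is to exploit the parallelism of $H$ and $V^j$ under the canonical connection to collapse $\pi^{j+1}\tor(T,X)$ into a purely bracket-theoretic expression that makes no reference to $g$.

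First I will write the torsion as $\tor(T,X) = \nabla_T X - \nabla_X T - [T,X]$ and extend $T_p,X_p$ to local sections $T \in \Gamma^\infty(V^j)$ and $X \in \Gamma^\infty(H)$. By Theorem \ref{T:Connection}, both $V^0=H$ and $V^j$ are $\nabla$-parallel, so $\nabla_T X \in \Gamma^\infty(H)$ and $\nabla_X T \in \Gamma^\infty(V^j)$.

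Next, assuming $j \geq 1$ (the case $j=0$ is already contained in Theorem \ref{T:Connection} since both arguments are then horizontal), we have $j+1 \neq 0$ and $j+1 \neq j$, so $\pi^{j+1}$ annihilates both $\nabla_T X$ and $\nabla_X T$. The grading hypothesis
\[ [\Gamma^\infty(HM),\Gamma^\infty(V^j)] \subseteq \Gamma^\infty(HM \oplus V^1 \oplus \dots \oplus V^{j+1}) \]
keeps $\pi^{j+1}[X,T]$ well defined, and I am left with
\[ \pi^{j+1}\tor(T,X) = \pi^{j+1}[X,T], \]
an expression involving only the Lie bracket and the splitting $TM = HM \oplus V^1 \oplus \dots \oplus V^r$ -- the metric $g$ has vanished.

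The only routine verification is that $\pi^{j+1}[X,T]$ is $C^\infty$-linear in each slot, so that it really defines a pointwise invariant. This follows from $[X,fT] = f[X,T] + X(f) T$, because the error term $X(f) T$ lies in $V^j$ and is killed by $\pi^{j+1}$, with the symmetric computation handling the $X$-slot. I do not anticipate any genuine obstacle; the substantive step is simply recognizing that the parallelism of $H$ and $V^j$ allows one to trade the torsion for the symbol of the Lie bracket in the $V^{j+1}$ direction, and this symbol depends only on $VM$ and the $r$-grading.
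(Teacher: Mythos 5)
Your proof is correct and follows essentially the same route as the paper: write the torsion as $\nabla_T X - \nabla_X T - [T,X]$, use parallelism of $H$ and $V^j$ to see that $\pi^{j+1}$ kills the connection terms, and reduce to $\pi^{j+1}[X,T]$, which depends only on the bracket and the grading. The tensoriality check you add is fine (the paper instead invokes tensoriality of $\tor$ directly), and your separate treatment of $j=0$ via Theorem \ref{T:Connection} matches the intended scope.
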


\begin{proof}
If $X$ and $T$ are any vector field extensions of  the vectors $X_p ,T_p$ such that $X$, $T$ are sections of $H^0$ and $V^j$ respectively, then it is clear that
\[ \pi^{j+1} [T,X]_p = -\pi^{j+1} \tor(T,X)_p = -\pi^{j+1} \tor(T_p,X_p). \]
However from the definition of a regular grading, it is clear that the left hand side is independent of the metric extension.

\end{proof}

A direct consequence of the last corollary is that portions of the connection and torsion are invariant under $H$-isometries in the following sense.

\begin{cor}\label{C:Fundamental}
If $F \colon M \to \widetilde{M}$ is an sRC-isometry between sRC-manifolds, then for all horizontal vector fields $X,Y$,  vertical vector fields $T$ and arbitrary vector fields $A$
\begin{itemize}
\item $F_* \left( \nabla_A X \right) = \widetilde{\nabla}_{F_* A} F_* X$,
\item $F_* \tor(X,Y) = \widetilde{\tor}(F_*X,F_*Y)$,
\item $F_* \tor_H(T,X) = \widetilde{\tor}_H (F_* T,F_* X)$
\end{itemize}
\end{cor}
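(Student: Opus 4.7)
My plan is to reduce the three identities to the uniqueness and invariance assertions of Theorem \ref{T:Connection}. First, fix any extension metric $\tilde g$ on $\widetilde M$ making the basic grading $H\widetilde M\oplus V\widetilde M$ orthogonal, and let $\widetilde\nabla$ be the connection Theorem \ref{T:Connection} produces for $\tilde g$ and the basic grading. Since $F$ is an $H$-isometry we have $F_*HM=H\widetilde M$, $F_*VM=V\widetilde M$, and $F^*\tilde g$ restricts to $\aip{}{}{M}$ on $HM$; hence $g:=F^*\tilde g$ is itself an orthogonal extension of the basic grading on $M$.

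Next, define a connection $\nabla'$ on $M$ by $\nabla'_AX:=F_*^{-1}\widetilde\nabla_{F_*A}F_*X$. A direct diffeomorphism pullback shows that $\nabla'$ is metric-compatible with $g$, that $HM$ and $VM$ are $\nabla'$-parallel (because $F_*$ intertwines the splittings), and that the torsions satisfy $\tor'(A,B)=F_*^{-1}\widetilde\tor(F_*A,F_*B)$, from which the two torsion conditions in Theorem \ref{T:Connection} transfer from $\widetilde\nabla$ to $\nabla'$. The uniqueness part of Theorem \ref{T:Connection} therefore identifies $\nabla'$ with the canonical connection $\nabla^g$ associated to $g$ and the basic grading on $M$.

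By the final clause of Theorem \ref{T:Connection}, for horizontal $X,Y$ and vertical $T$ the tensors $\nabla X$, $\tor(X,Y)$ and $\pi^0\tor(T,X)$ agree whether computed from the original choice of extension or from $g$. Consequently $\nabla_AX=\nabla^g_AX=\nabla'_AX=F_*^{-1}\widetilde\nabla_{F_*A}F_*X$ and $\tor(X,Y)=F_*^{-1}\widetilde\tor(F_*X,F_*Y)$, which give the first two bullets after applying $F_*$. The third bullet follows from the same identification together with the commutation $F_*\pi^0=\widetilde\pi^0F_*$, which holds because $F_*$ preserves the $HM$--$VM$ splitting.

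The only non-cosmetic step is the verification that the pulled-back connection $\nabla'$ satisfies each axiom of Theorem \ref{T:Connection}; this is routine, but it is the place where the hypothesis that $F$ preserves both $HM$ \emph{and} $VM$, not merely the horizontal bundle and its metric, is used essentially.
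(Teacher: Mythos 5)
Your proposal is correct and is essentially the argument the paper intends: the paper offers no written proof, presenting the corollary as a direct consequence of Theorem \ref{T:Connection} (and Corollary \ref{C:Extend}), and the natural fleshing-out is exactly your pullback-of-the-metric, verify-the-axioms, invoke-uniqueness, then invoke-independence-of-the-extension argument. Your closing remark correctly identifies where preservation of $VM$ (not just $HM$ and its metric) is essential, which matches the paper's emphasis on strong versus weak $H$-isometries.
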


%\begin{defn}\label{D:Constants}
%Let 
%\begin{align*}
 %\ub &= \frac{d}{2} \sup \left\{   \left|  \tor(T,X_1)_H \right|^2  \right\} \\
  %\ub_V &= \frac{d}{2} \sup \left\{   \left|  \tor(T,X_1)_V \right|^2  \right\} \\
 %\kappa &= \sup  \sum\limits_E \left|  \tor(E,X_1) \right|^2 \\
 %\sigma &=  \frac{1}{2} \sup   \left|\sum\limits_E \nabla  \aip{\tor(T,E,X)}{E}{} \right| \\
% \sigma &= \sup \tau(T,X) \\
 %\chi &= \sup \aip{T}{\text{tr}(\nabla \tor)(T)}{}
 %\end{align*}
%\end{defn}

These connections are not torsion-free and this presence of torsion greatly complicates analysis on sRC-manifolds as compared to the Riemannian case. To obtain and optimize results, we shall use a variety of restrictions on the torsion.

\begin{defn}\label{D:Normality}
Let $\{E_i\}$ be any local orthonormal frame for $HM$ and $\{U_k\}$ any local orthonormal frame for $VM$, graded if $VM$ is graded. 
\begin{itemize}
\item An sRC-manifold is {\em $H$-normal} if $\tor(HM,VM) \subseteq VM$. This is independent of $g$ and choice of grading.
\item A metric extension is {\em $V$-normal} if $\tor(HM,VM) \subseteq HM$.
\item A metric extension is {\em strictly normal} if $\tor(HM,VM)=0$
\item  The {\em rigidity tensor} for $g$ is 
\[ \mathfrak{R}(A) = \sum\limits_i \aip{\tor(E_i,A)}{E_i}{} + \sum\limits_k \aip{\tor(U_k,A)}{U_k}{} \]
  The {\em rigidity vector} for $g$ is 
 \[ \widehat{\mathfrak{R}} = \sum\limits_k \mathfrak{R}(E_k)E_k + \sum\limits_i \mathfrak{R}(U_i)U_i. \]
 The sRC-manifold is {\em vertically rigid} if $\widehat{\mathfrak{R}} \in VM$ everywhere, {\em horizontally rigid} if $\widehat{\mathfrak{R}} \in HM$ everywhere and {\em totally rigid} if $\widehat{\mathfrak{R}} \equiv 0$. 
\end{itemize}
\end{defn}

For the remainder of this section, we shall work with the basic grading and assume that an extension to a Riemannian metric has been chosen . All results transfer to more complicated gradings without much effort.

With this connection in hand, we can now define what will be a key tool in studying the $H$-Killing fields of $M$.

\begin{defn}\label{D:Bil}
For a weak $H$-Killing field, $K$, we define a linear operator \[ \mB\colon TM \to HM\] by
\[ \mB (A) = \nabla_A K_H + \tor(K,A)_H = \left( \nabla_K A -[K,A] \right)_H \]
and a bilinear form on $TM$ by 
\[ \mB (A,B) = \aip{ \mB (A) }{B}{}  .\]
\end{defn}

\begin{defn}\label{D:Spaces}
We define $\mathscr{K}$ and $\mathscr{K}^*$ to be the Lie algebras of $H$-Killing fields and weak $H$-Killing fields respectively.  We further define
\begin{align*}
\mathscr{K}_V &= \{ K \in \mathscr{K} \colon K_H  \equiv 0 \}\\
\mathscr{K}_B &= \{ K \in \mathscr{K} \colon \mB \equiv 0 \}\\
\mathscr{K}_p &= \{ K \in \mathscr{K} \colon   K_p =0\}
\end{align*}
With a $*$ superscript denoting the weak versions of the same spaces. 
\end{defn}

To conclude this section, we list the basic properties of $\mB$ and the Lie algebras above.

\begin{lemma}[Elementary Properties]\label{L:EP}
If $K \in \mathscr{K}^*$ then
\begin{enumerate}
\item For $X \in HM$, 
\[ \nabla_X K_V = \tor(X,K)_V \]
\item $\mB$ is skew-symmetric on $HM$.
\item $\nabla_{K_H} K_V = - \tor(K_V,K_H)_V=-\tor(K,K_H)_V = - \tor(K_V,K)_V$
\end{enumerate}
If $K \in \mathscr{K}$ then additonally
\begin{enumerate}  \addtocounter{enumi}{3}
\item For $T \in VM$, 
\[ \nabla_T K_H = \tor(K,T)_H \]
\item $\mB(VM) =0$
\item $\mB$ is skew-symmetric on $TM$.
\item $ \nabla_{K_V} K_H = - \tor(K_H,K_V)_H  = -\tor(K,K_V)_H = - \tor(K_H,K)_H$
\end{enumerate}
\end{lemma}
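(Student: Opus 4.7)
The plan is to exploit two main ingredients: the torsion identity $\nabla_A B = \nabla_B A + [A, B] + \tor(A, B)$ and the parallelism of both $HM$ and $VM$ granted by Theorem \ref{T:Connection}. Most of the claims will then drop out by projecting onto the appropriate summand of $TM = HM \oplus VM$ and invoking the defining bracket conditions of the various Killing fields.

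To prove (1), I would apply the torsion identity to $\nabla_X K$ and project onto $VM$. Parallelism of $HM$ gives $(\nabla_K X)_V = 0$, and the weak Killing condition $[K,X] \in HM$ gives $[X,K]_V = 0$, leaving only the torsion term $\tor(X,K)_V$. Item (3) is then immediate by setting $X = K_H$ in (1), using antisymmetry of $\tor$, and invoking $\tor(K_H,K_H) = 0$. Items (4) and (7) follow by exactly the same template with the roles of $HM$ and $VM$ reversed: the strong Killing condition yields $[T,K]_H = 0$, parallelism of $VM$ gives $(\nabla_K T)_H = 0$, and the torsion identity pins down $\nabla_T K_H$.

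For (2), the key is to use the second form of $\mB$ in Definition \ref{D:Bil}, namely $\mB(X) = (\nabla_K X - [K,X])_H$. Combining metric compatibility of $\nabla$ with the Killing identity $K\aip{X}{Y}{} = \aip{[K,X]}{Y}{} + \aip{X}{[K,Y]}{}$ yields
\[ \aip{\nabla_K X - [K,X]}{Y}{} + \aip{X}{\nabla_K Y - [K,Y]}{} = 0, \]
and since $Y \in HM$ only the $H$-components in the first slots contribute, recovering precisely $\mB(X,Y) + \mB(Y,X) = 0$.

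Finally, (5) is the assertion that the two terms of $\mB(T) = \nabla_T K_H + \tor(K,T)_H$ cancel, which is immediate from (4); and (6) then follows from (2), (5), and the observation that $\mB$ is valued in $HM$, so $\mB(A,T) = \aip{\mB(A)}{T}{} = 0$ whenever $T \in VM$ (giving skew-symmetry on the mixed and vertical blocks automatically). The only delicate point throughout is consistent sign tracking in the torsion identity, since several of the claimed formulas differ only in the order of the arguments of $\tor$; beyond that I anticipate no serious obstacle.
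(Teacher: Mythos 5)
Your proposal is correct and takes essentially the same route as the paper: project $[A,B]=\nabla_A B-\nabla_B A-\tor(A,B)$ onto the parallel summands $HM$ and $VM$ using the Killing bracket conditions for (1), (3), (4), (5), (7), and subtract the Killing identity from metric compatibility for (2), with (6) following because $\mB$ is $HM$-valued; obtaining (3) and (7) by specializing (1) and (4) rather than recomputing is only a cosmetic shortcut. Note that your template produces $\nabla_T K_H=\tor(T,K)_H$, which is the sign consistent with the definition of $\mB$ and with item (5) (and is what the paper's own computation gives), so your deduction of (5) from (4) is sound as you set it up.
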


\begin{proof} If $K \in \mathscr{K}^*$, then for a horizontal vector field  $X$
\begin{align*}
0 &= [K,X]_V = -\nabla_X K_V - \tor(K,X)_V 
\end{align*}
Next, if $X,Y \in HM$ then, since $\mathcal{L}_K \aip{}{}{} =0$, we see
\begin{align*}
 K \aip{X}{Y}{} &= \aip{[K,X]}{Y}{} + \aip{[K,Y]}{X}{}  \\
 &= \aip{ \nabla_K X - \nabla_X K - \tor(K,X) }{Y}{} \\ & \qquad + \aip{ \nabla_K Y - \nabla_Y K - \tor(K,Y) }{X} {} \\
 &= K \aip{X}{Y}{}  - \mB (X,Y) + \mB (Y,X) 
\end{align*}
This proves skew-symmetry on $HM$. Additionally, we see
\begin{align*}
0 &= [K,K_H]_V =  - \nabla_{K_H} K_V - \tor(K,K_H)_V
\end{align*}

Now if $K \in \mathscr{K}$ then we also see that for a vertical vector field $T$
\[ 0 = [T,K]_H = \nabla_T K_H - \tor(T,K_H)_H = \mB(T). \]
It is then clear that the skew-symmetry extends to all of $TM$.
Now we additionally see that
\begin{align*}
0 &=[K,K_V]_H = -\nabla_{K_V} K_H - \tor(K,K_V)_H \\
\end{align*}
\end{proof}  

\begin{cor}\label{C:vertelem}
If $T \in \mathscr{K}^*_V$ then $ \tor(T,HM)_H = 0 $. If $M$ is $V$-normal, then $T$ is $H$-parallel.
\end{cor}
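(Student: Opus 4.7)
The plan is to combine the skew-symmetry of $\mB$ (Lemma \ref{L:EP}(2)) with the torsion-symmetry property built into the canonical connection (the fourth defining condition in Theorem \ref{T:Connection}). The tension between these two statements will force the horizontal torsion to vanish.

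More concretely, if $T \in \mathscr{K}^*_V$ then $T_H \equiv 0$, and so the definition of $\mB$ reduces to
\[ \mB(A) = \nabla_A T_H + \tor(T,A)_H = \tor(T,A)_H. \]
In particular, for horizontal vector fields $X,Y$,
\[ \mB(X,Y) = \aip{\tor(T,X)}{Y}{}. \]
Lemma \ref{L:EP}(2) (which applies to any weak $H$-Killing field) gives $\mB(X,Y) = -\mB(Y,X)$, so
\[ \aip{\tor(T,X)}{Y}{} = -\aip{\tor(T,Y)}{X}{}. \]
On the other hand, applying Theorem \ref{T:Connection} with the basic grading, in the case $j=0$, $k=1$, with $X,Y \in HM = V^0$ and $T \in VM = V^1$, yields the symmetry
\[ \aip{\tor(T,X)}{Y}{} = \aip{\tor(T,Y)}{X}{}. \]
Adding these two identities forces $\aip{\tor(T,X)}{Y}{} = 0$ for every $Y \in HM$, which is exactly $\tor(T,X)_H = 0$. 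This establishes the first claim.

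For the second claim, $V$-normality says $\tor(HM,VM) \subseteq HM$, so $\tor(T,X) = \tor(T,X)_H$, and the first part forces $\tor(T,X) = 0$ outright. Lemma \ref{L:EP}(1), applied to $K = T$ with $K_V = T$, then gives
\[ \nabla_X T = \nabla_X T_V = \tor(X,T)_V = 0 \]
for every $X \in HM$, so $T$ is $H$-parallel.

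The only subtlety is making sure the inner products and projections line up correctly, since $Y \in HM$ automatically picks off the horizontal component, i.e.\ $\aip{\tor(T,X)}{Y}{} = \aip{\tor(T,X)_H}{Y}{}$; once that is noted the argument is essentially two lines. No serious obstacle is anticipated.
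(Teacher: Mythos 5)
Your proof is correct and follows the paper's own argument: the paper likewise observes that for vertical $T$ the operator $\mB(\cdot)=\tor(T,\cdot)_H$ is both skew-symmetric (Lemma \ref{L:EP}) and symmetric on $HM$, hence zero, and then uses $V$-normality together with Lemma \ref{L:EP}(1) to conclude $\nabla_X T=0$. The only difference is cosmetic: you spell out that the symmetry comes from the fourth defining condition of Theorem \ref{T:Connection} (with $j=0$, $k=1$ in the basic grading), which the paper leaves implicit.
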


\begin{proof} For a vertical weak $H$-Killing field $T$, we see that
\[ \mB[T]( \cdot) = \tor(T,\cdot)_H \]
is both symmetric and skew-symmetric on $HM$. Thus $\tor(T,HM)_H=0$. Thus is $M$ if $V$-normal then $\tor(T,HM)=0$. But then by Lemma \ref{L:EP} (a),  $\nabla_X T =0$ for all $X \in HM$.

\end{proof}

\section{The group $\text{Iso}(M)$ }\label{Iso}

Throughout this section we shall assume that $M$ is a complete with respect to the sub-Riemannian distance. 

We now turn to the task of determining the structure of the groups of isometries. In \cite{Strichartz} and \cite{StrichartzCor} , Strichartz established  that, under an additional condition known as strong bracket generation,  a weak isometry $F$ is determined by $F(p)$ and $(F_*)_p$ and that $\text{Iso}^*(M)$ is a Lie group.  This additional constraint does not appear to be necessary  for the Lie group result if we use the Kobayashi methodology outlined here. In this paper, we shall instead focus on $\text{Iso}(M)$. Under the relatively weak condition that $H$ bracket generates we shall establish that it is indeed a Lie group. We will then use the analytic machinery developed in the previous section to establish bounds on its dimension.

\begin{lemma}\label{L:global}
If $K \in \mathscr{K}^*$ then the  $1$-parameter subgroup of weak $H$-isometries generated by $K$ is globally defined on $M$.
\end{lemma}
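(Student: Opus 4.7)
The plan is to find some $\epsilon>0$ for which $\Phi_t$ extends to all of $M$ on the time interval $[-\epsilon,\epsilon]$, and then to iterate the group law $\Phi_{t+s}=\Phi_t\circ\Phi_s$ to obtain $\Phi_t$ for every $t\in\mathbb R$.

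By Lemma \ref{L:Killing}(1) the local flow $\Phi_t$ of $K$ acts, where defined, by weak $H$-isometries, and these preserve the sub-Riemannian distance $d$ because they carry horizontal curves to horizontal curves of the same length. Standard ODE theory yields some $\epsilon>0$ so that $\Phi_t(p)$ is defined for $|t|\leq\epsilon$. Put
\[ U := \{\, q\in M : \Phi_t(q) \text{ is defined for all } |t|\leq\epsilon\,\}. \]
The set $U$ is nonempty since $p\in U$, and it is open because at any $q\in U$ the maximal existence interval of the flow strictly contains $[-\epsilon,\epsilon]$, so continuous dependence of ODE solutions on initial conditions forces nearby points to lie in $U$ as well.

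The substantive step will be closedness of $U$. Suppose $q_n\in U$ with $q_n\to q$ in $M$. For each $|t|\leq\epsilon$, the isometry property of $\Phi_t$ gives
\[ d(\Phi_t(q_n),\Phi_t(q_m)) = d(q_n,q_m), \]
so the family $\{\Phi_t(q_n)\}_n$ is Cauchy in $n$ uniformly in $t$. Since $M$ is $d$-complete, the limit $\Psi(t):=\lim_n \Phi_t(q_n)$ exists for every $|t|\leq\epsilon$ and, by the uniformity in $t$, defines a continuous curve with $\Psi(0)=q$. Passing to the limit in the chart-local integral form of the ODE $\frac{d}{dt}\Phi_t(q_n)=K_{\Phi_t(q_n)}$ and invoking smoothness of $K$ identifies $\Psi$ with an integral curve of $K$ through $q$, which by uniqueness coincides with the flow of $K$ at $q$ wherever the latter is already defined and extends it to all of $[-\epsilon,\epsilon]$; hence $q\in U$.

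Connectedness of $M$ then yields $U=M$, so $\Phi_t$ is globally defined on $M$ for $|t|\leq\epsilon$, and iterating the group law gives a global flow for every $t\in\mathbb R$. The hard part is the closedness step: what makes the argument work is precisely that weak $H$-isometries preserve the sub-Riemannian distance even when they do not preserve the complement $VM$, so sub-Riemannian completeness can be leveraged to push Cauchy sequences of initial conditions forward through the flow uniformly in time, exactly the control needed to extend $\Psi$ past the domain on which $\Phi_t(q)$ was originally known to exist.
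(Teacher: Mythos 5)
Your openness step is fine, but the closedness step --- which you yourself flag as the crux --- contains a genuine gap: the equality $d(\Phi_t(q_n),\Phi_t(q_m))=d(q_n,q_m)$ is not available at that stage. Lemma \ref{L:Killing}(1) only gives that the \emph{local} flow preserves $HM$ and the inner product on it, so it preserves the length of a horizontal curve \emph{provided the curve lies in the domain of $\Phi_t$}. But $d(q_n,q_m)$ is an infimum over horizontal curves joining $q_n$ to $q_m$, and these competitors may leave that domain; indeed for $q_n,q_m$ close to the limit point $q$, near-minimizing curves typically pass close to $q$, exactly where membership in $U$ is unknown. Hence neither inequality between $d(\Phi_t q_n,\Phi_t q_m)$ and $d(q_n,q_m)$ follows, and the uniform Cauchy property --- the engine of your closedness argument --- is unsupported. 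Distance preservation for $\Phi_t$ as a self-map of $M$ is legitimate only once $\Phi_t$ is known to be a globally defined weak $H$-isometry, which is the statement being proved, so the argument is circular at its load-bearing step. That the equality genuinely fails for local flows can be seen already in the Riemannian special case $HM=TM$, $VM=0$: on the complement of the closed unit disc in the flat plane, $K=\partial_x$ is a Killing field whose time-$2$ local flow is defined at the pair $(-2.9,\pm\tfrac12)$ and sends it to $(-0.9,\pm\tfrac12)$; the first pair is at intrinsic distance $1$, while the second is at distance strictly greater than $1$ because the removed disc now blocks the straight segment. That manifold is of course incomplete, but your justification of the equality makes no use of completeness, so it cannot be correct as stated.

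The paper sidesteps this by never attempting to control the flow on a whole neighborhood of a point. It argues by contradiction along a single maximal integral curve $\gamma$: by the Escape Lemma such a curve would have to leave every compact set in finite time, while the flow-invariance used only along the orbit itself bounds the displacement per unit time, giving $\sup_{0\le t\le n}d(\gamma(t),\gamma(0))\le nL_1$; thus on finite time intervals $\gamma$ stays in a bounded set, which by completeness is contained in a compact set --- a contradiction. If you want to keep your open/closed scheme you would need a genuine substitute for the distance equality, and the natural source of such control is precisely the orbitwise estimate the paper uses, so the most direct repair is to adopt the paper's argument.
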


\begin{proof} Suppose otherwise. Then there exists an integral curve of $K$ whose domain is not all of $\mathbb{R}^{}$. The Escape Lemma (see for example Lemma 17.10 in \cite{LeeSmooth})  implies that the image of $\gamma$ is not contained in any compact set. But if $\gamma(0)=p$ and $L_n = \sup\limits_{0 \leq t \leq n} d(\gamma(t),p)$, then $L_n = n L_1$. But this implies that $d(\gamma(t),p)$ is bounded by a constant multiple of $t$ and so as $M$ is complete $\gamma$ cannot escape all compact sets in finite time.

\end{proof}

Classically, a tool to study isometries is the structure of geodesics. Unfortunately, distance minimizing curves in sub-Riemannian geometry are poorly behaved in the sense that the exponential map is not a local diffeomorphism. For this reason, we shall introduce a different category of curves which we shall use to connect distant points on $M$.

\begin{defn}\label{D:Rules}
A {\em rule} on $M$ is a curve $\gamma \colon (-\epsilon,\epsilon) \to M$ such that
\[  \dot{\gamma} \in H_\gamma M, \qquad  \nabla_{\dot{\gamma}} \dot{\gamma} = 0 \]
We say that two points $p,q \in M$ are {\em rule related} if they can be joined by a finite concatenation of rules. The manifold $M$ is {\em traversable} if every pair of points in $M$ is rule related.
\end{defn}

Since Corollary \ref{C:Fundamental} states that smooth $H$-isometries preserve the connection, they must also map rules to rules. We remark that this need not be true for weak $H$-isometries and indeed this marks a key distinction between the weak and strong cases.

%\begin{rem}\label{R:Rules}
%The set of rules emanating from $p$ is a subset of the s sub-Riemannian geodesic emanating from $p$.
%\end{rem}
 Our next goal is to study how a manifold is connected by rules. To proceed, we shall need a technical lemma.

\begin{lemma}\label{L:CMZ}
If $HM$ bracket generates at every point and $\Sigma \subset M$ is an embedded submanifold, then the characteristic set 
\[ C(\Sigma) = \{ p \in \Sigma \colon H_p M \subset T_p \Sigma\}\]
is a closed, nowhere dense set of $\Sigma$. 
\end{lemma}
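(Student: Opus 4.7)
My plan is to treat closedness and emptiness of interior separately, since nowhere dense is equivalent to empty interior once $C(\Sigma)$ is known to be closed in $\Sigma$.

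Closedness is routine: given a sequence $p_n \in C(\Sigma)$ converging to some $p \in \Sigma$, take any $v \in H_p M$ and extend it to a smooth local section $X$ of $HM$ near $p$. Then $X(p_n) \in H_{p_n}M \subset T_{p_n}\Sigma$ and $X(p_n) \to v$. Since $T\Sigma$ is a smooth, hence closed, subbundle of $TM|_\Sigma$, the limit $v$ lies in $T_p\Sigma$, so $p \in C(\Sigma)$.

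For empty interior, I argue by contradiction: suppose a nonempty open subset $U$ of $\Sigma$ is contained in $C(\Sigma)$. The heart of the argument is the inductive claim that for every $j \geq 0$ and every $p \in U$, $H^j_p \subset T_p\Sigma$. The base case $j = 0$ is the hypothesis $U \subset C(\Sigma)$. For the inductive step, take $X \in \Gamma^\infty(HM)$ and a smooth vector field $Z$ on a neighborhood of $p$ with $Z_q \in H^j_q$ everywhere. By the $j=0$ case applied to $X$ and the induction hypothesis applied to $Z$, both fields are tangent to $\Sigma$ at every point of $U$. Working in slice coordinates $(x^1, \ldots, x^n)$ at $p \in U$ in which $\Sigma$ is locally $\{x^{k+1} = \cdots = x^n = 0\}$ with $k = \dim \Sigma$, this tangency forces the components $X^i$ and $Z^i$ to vanish identically on $U$ for $i > k$. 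A direct computation of $[X,Z]^i$ at $q \in U$ for $i > k$ then involves only sums of $X^j \partial_j Z^i$ and $Z^j \partial_j X^i$ with $j \leq k$ (since the $X^j, Z^j$ with $j > k$ vanish at $q$), and these derivatives also vanish at $q$ because $X^i$ and $Z^i$ vanish identically along $U$. Thus $[X,Z]_q \in T_q\Sigma$, completing the induction. Bracket generation then supplies some $r$ with $H^{r-1}_p = T_p M$ at $p \in U$, forcing $T_p M \subset T_p \Sigma$, which contradicts $\Sigma$ being a proper embedded submanifold (the lemma is vacuous otherwise).

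I expect the main technical point to be the slice-coordinate verification for the bracket. The subtlety is that $Z$ is only assumed to be a smooth vector field everywhere taking values in $H^j$ rather than a section of a smooth subbundle; but since $Z$ is smooth on $M$, the standard coordinate formula for $[X,Z]$ applies, and the vanishing of the normal components of $X$ and $Z$ along all of $U$ (not merely pointwise at $q$) is precisely what makes the tangential derivatives vanish there.
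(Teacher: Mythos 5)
Your proof is correct and takes essentially the same route as the paper: the paper verifies closedness by noting that locally $C(\Sigma)$ is the zero set of the horizontal part of a normal frame (your sequence/closed-subbundle argument is an equivalent routine variant), and it dismisses a nonempty interior in one sentence by observing that on an open subset of $C(\Sigma)$ the iterated brackets of $HM$ could only generate $T\Sigma$ --- which is precisely the induction and slice-coordinate computation you carry out in detail. No gaps; your write-up simply supplies the details the paper leaves implicit.
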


\begin{proof} Suppose $p$ is a limit point of $C(\Sigma)$.  Near $p$, let $N=(N_1,\dots,N_k)$ be a framing for the normal bundle of $\Sigma$ with respect to any metric extension. Then near $p$, $C(\Sigma) = \{N_H =0\}$. This is a closed condition and so $p \in C(\Sigma)$.  Now, if the complement of $C(\Sigma)$ in  $\Sigma$ contains an open set, then on that open set $HM$ can only bracket generate $T\Sigma$.

\end{proof}

In fact, far stronger statements can be shown. For details in the hypersurface case, see the appendix of \cite{HP4}, or the results of \cite{Derridj}.

\begin{thm}\label{T:Traversable}
If $M$ is a connected manifold such that $HM$ bracket generates at every point, then $M$ is traversable.
\end{thm}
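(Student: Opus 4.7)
First, I would note that being rule-related is an equivalence relation: symmetry holds because $t\mapsto \gamma(-t)$ preserves horizontality and the equation $\nabla_{\dot\gamma}\dot\gamma=0$, while transitivity is immediate from concatenation. The equivalence classes $R(p)$ partition $M$, so by connectedness it suffices to prove each $R(p)$ is open. I would achieve this in two stages: first produce an open subset of $R(p)$ by a dimension-raising submanifold construction driven by Lemma~\ref{L:CMZ}, then transport this openness back to $p$ through the reversible rule chain arising from the construction.

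For the first stage, set $S_0=\exp^H_p(U_0)$ for a small neighborhood $U_0$ of $0$ in $H_pM$ on which $\exp^H_p$ is an embedding, so $S_0\subseteq R(p)$ is an embedded submanifold through $p$ of dimension $\mathrm{rank}(HM)$. Inductively, given an embedded $S_j\subseteq R(p)$ of dimension $d_j<\dim M$ containing a point $q_j$ (with $q_0=p$), Lemma~\ref{L:CMZ} guarantees $C(S_j)$ is nowhere dense in $S_j$, so I may pick $q_{j+1}\in S_j\setminus C(S_j)$ arbitrarily close to $q_j$, select $Z_{j+1}\in H_{q_{j+1}}M\setminus T_{q_{j+1}}S_j$, extend it to a horizontal vector field $Z$, and let $S_{j+1}$ be the image of $(s,t)\mapsto\exp^H_s(tZ(s))$ on a small neighborhood of $(q_{j+1},0)$ in $S_j\times\mathbb{R}$. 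The differential at $(q_{j+1},0)$ spans $T_{q_{j+1}}S_j\oplus\mathbb{R}Z_{j+1}$, hence $S_{j+1}$ is embedded of dimension $d_j+1$, and every point of $S_{j+1}$ is joined to a point of $S_j\subseteq R(p)$ by a single rule, forcing $S_{j+1}\subseteq R(p)$. The iteration terminates in at most $\dim M-\mathrm{rank}(HM)$ steps with an open set $S_r\subseteq R(p)$ through some $q_r$.

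For the second stage, by making each $q_{j+1}$ close enough to $q_j$ I can ensure $q_r$ is as close to $p$ as desired, and the construction provides an explicit concatenation of rules $p=s_0\to s_1\to\cdots\to s_m=q_r$; after subdivision I may take each segment to have arbitrarily small parameter length $L_i$. Extending the initial horizontal velocity of the $i$th rule to a smooth horizontal vector field $\tilde X_i$ near $s_{i-1}$, the map $\rho_i(r)=\exp^H_r(L_i\tilde X_i(r))$ sends $s_{i-1}$ to $s_i$ and reduces to the identity as $L_i\to 0$, so for small $L_i$ it is a local diffeomorphism between neighborhoods of $s_{i-1}$ and $s_i$. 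The composition $\Phi=\rho_m\circ\cdots\circ\rho_1$ is then a local diffeomorphism carrying $p$ to $q_r$, and every $r\in\Phi^{-1}(S_r)$ is rule-related to $\Phi(r)\in S_r\subseteq R(p)$ via a rule chain of length $m$, hence lies in $R(p)$. Thus $R(p)$ contains the open neighborhood $\Phi^{-1}(S_r)$ of $p$, completing the proof.

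The main difficulty is the inductive step of the first stage: one must use Lemma~\ref{L:CMZ} not merely to produce a non-characteristic point but to produce one arbitrarily close to $q_j$ so that the accumulated $q_r$ can be forced close to $p$, and one must shrink the parameter neighborhood of the immersion at each stage just enough to keep $S_{j+1}$ embedded while preserving the distance control. Bracket generation enters the argument exclusively through Lemma~\ref{L:CMZ}, while the transport step relies only on the elementary observation that the rule-flow map $\rho_i$ is a perturbation of the identity on a full neighborhood in $M$, not merely along the horizontal direction.
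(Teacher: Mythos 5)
Your proof is correct, but it reaches the conclusion by a partly different mechanism than the paper. Both arguments are driven by Lemma \ref{L:CMZ} and raise dimension one unit at a time, yet the paper runs the induction on foliations: it keeps a fixed coordinate neighborhood of $p$ foliated by traversable leaves, uses a rule transverse to a leaf together with the inverse function theorem on the transverse coordinates to regroup the slices into a coarser traversable foliation, and stops when the leaves have full dimension; since $p$ sits inside the foliated neighborhood at every stage, the final leaf is automatically an open neighborhood of $p$ and no transport step is ever needed. You instead grow a single embedded submanifold $S_0\subset S_1\subset\cdots$ inside the one class $R(p)$ until it becomes open, accept that the resulting open set $S_r$ need not contain $p$, and then add a second ingredient absent from the paper: pulling $S_r$ back to $p$ along the chain of rules by the near-identity maps $\rho_i(r)=\exp^H_r\bigl(L_i\tilde X_i(r)\bigr)$, in the spirit of orbit-theorem arguments. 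This buys a lighter construction (no slice coordinates, no foliation bookkeeping, no inverse function theorem on the transverse factor) at the price of the subdivision/perturbation argument for the $\rho_i$; the only point there requiring care is uniformity, which is handled by fixing one horizontal extension of the velocity along each (embedded) rule segment and letting the subdivision length tend to zero for that fixed extension, so that all pieces are simultaneously local diffeomorphisms near the curve. Note also that the effort you flag as the main difficulty, keeping $q_{j+1}$ close to $q_j$ so that $q_r$ stays near $p$, is superfluous: your transport step works along any finite concatenation of rules, since the finitely many domains pull back to an open neighborhood of $p$ regardless of how far $q_r$ has wandered.
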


\begin{proof}
Since rule relation is an equivalence relation and $M$ is connected, it suffices to show that for every $p \in M$, there is an open set  containing $p$ such that every point in the set is rule related to $p$.

The idea is to begin by considering the immersed submanifolds $R_p$ consisting of all rules emanating from a point $p\in M$. Locally, these submanifolds will yield a foliation with leaves of dimension $\dim HM$. The proof will proceed inductively by showing that the leaves of any traversable foliation can be pieced together to provide a traversable foliation of a neighborhood of $p$ with leaves of dimension one larger. Repeating this process will eventually produce a foliation by traversable leaves of the same dimension as the manifold. Such a foliation must necessarily consist of a single leaf and so yields the desired open neighborhood.

Fix $p \in M$ and suppose we have a local coordinate chart $U \times W$ with coordinates $(u^1,\dots,u^k,w^1,\dots,w^m)$ such that $p=(0,\dots,0)$ and each submanifold $F_c =\{ w =c\}\subset U \times W$ is traversable. The manifolds $F_c$ form a foliation by dimension $k$ traversable submanifolds and the coordinates above are local slice coordinates for this foliation.

From Lemma \ref{L:CMZ}, there must exist $(q,0)  \in F_0$ such that $H_qM \nsubseteq T_q F_0$. Thus after a linear change in the $w$ coordinates, we can assume that  there is a rule $\gamma(t)$ with $\gamma(0)=q$ and $\gamma^\prime(0)$ has non-zero component in the $w^1$ coordinate.  Let $\Gamma$ be any extension of $\gamma^\prime(0)$ to a smooth horizontal vector field near $(q,0)$. Define a smooth function $G$ defined on small neighborhood of $0 \in \mathbb{R}^m$ by
\[ G(t, w^2,\dots,w^m) =\pi_W \circ \gamma_{(w^2,\dots,w^m)}(t) \]
where $\pi_W$ represents projection onto $W$ and $ \gamma_{(w^2,\dots,w^m)}(t)$ is the rule  emanating from the point with coordinates $(q,0,w^2,\dots,w^m)$ with initial tangent vector $\Gamma_{(q,0,w^2,\dots,w^m)}$.

It is clear that $(G_*)_{(q,0)}$ has full rank so, by the inverse function theorem, there is a smooth inverse $G^{-1}$ defined on  some open set $0 \in \tilde{W} \subset W$.  We split $G^{-1}$ into the first and remainder coordinates by $G^{-1} = (G^{-1}_1, G^{-1}_{>1})$. 

We now define a foliation of $U \times \tilde{W}$ by dimension $k+1$ submanifolds by setting
\[ \tilde{F}_{b} = \{ (u,w) \in U \times W \colon  G^{-1}_{>1} (w) = b\}\]
for $b \in G_{>1}^{-1}(\tilde{W})$.
To complete the argument we must argue that each $\tilde{F}_b$ is traversable.

First note that $G(0,w^2,\dots,w^m)= (q,0,w^2,\dots,w^m)$ and so each $\tilde{F}_b$ contains a point  $b^* = (q,0,b) \in U \times \tilde{W}$. We shall argue that all other points in $\tilde{F}_b$ can be connected to $b^*$ by a finite number of rules.

Suppose $(r,s) \in \tilde{F}_b$. Then $G^{-1}(s) =  (t_0,b)$ for some $t_0$, which implies that $s = \pi_W \circ \gamma_{b}(t_0)$.  But  then  $(r,s),\gamma_b(t_0) \in F_s$. Since our inductive assumption is that the level sets of $F$ are traversable, there must be a finite sequence of rules connecting $(r,s)$ to $\gamma_b(t_0)$. Now $\gamma_b(t)$ itself is a rule connecting $\gamma_b(t_0)$ with $b^*$. Hence $(r,s)$ can be connected to $b^*$ using a finite number of rules. Thus the leaves $\tilde{F}_b$ are themselves traversable and the inductive step is complete.

\end{proof}

An important consequence is the  following lemma.

\begin{lemma}\label{L:IsoBasic}If $M$ is connected and $F\colon M \to M$ is an $H$-isometry such that $F(p)=p$ and
\[  {F_*}_{|H_p} = \text{Id}_{H_p} \]
then $F = \text{Id}_M$.
\end{lemma}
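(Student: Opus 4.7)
The plan is to argue that $F$ fixes every point of $M$ by propagating the fixed-point condition outward from $p$ along rules, using the traversability established in Theorem \ref{T:Traversable}.

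First I would show that $F$ pointwise fixes every rule emanating from $p$. A rule is a solution of the second-order ODE $\nabla_{\dot\gamma}\dot\gamma=0$ with horizontal initial velocity, so it is uniquely determined by its starting point and initial tangent vector. By Corollary \ref{C:Fundamental}, $F$ preserves $\nabla$ when acting on horizontal vector fields, and it maps $HM$ to $HM$, so $F\circ\gamma$ is again a rule. Since $F(p)=p$ and $F_*\dot\gamma(0)=\dot\gamma(0)$, uniqueness forces $F\circ\gamma=\gamma$ as parametrized curves.

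Next I would upgrade this to $F_*|_{H_{\gamma(t)}}=\mathrm{Id}$ for every $t$ in the domain of $\gamma$. Because $HM$ is $\nabla$-parallel (Theorem \ref{T:Connection}), horizontal parallel transport along $\gamma$ is a linear isomorphism $H_p\to H_{\gamma(t)}$, so it suffices to check that $F_*$ commutes with this transport. Given $v\in H_p$, let $X(t)$ be its parallel transport along $\gamma$. Since $F\circ\gamma=\gamma$, the vector field $t\mapsto F_*X(t)$ lies along $\gamma$, and Corollary \ref{C:Fundamental} applied to a horizontal extension of $X$ yields
\[
\nabla_{\dot\gamma}(F_*X) \;=\; F_*(\nabla_{\dot\gamma}X) \;=\; 0,
\]
while $F_*X(0)=F_*v=v=X(0)$ by hypothesis. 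Uniqueness of parallel transport gives $F_*X(t)=X(t)$ for all $t$, so $F_*$ is the identity on $H_{\gamma(t)}$.

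The last step is a straightforward induction. Combining the two previous steps, the set
\[
S \;=\; \bigl\{\, q\in M \,:\, F(q)=q \text{ and } F_*|_{H_qM}=\mathrm{Id}\,\bigr\}
\]
is closed under moving along any rule emanating from one of its points. Since $M$ is connected and $HM$ bracket generates, Theorem \ref{T:Traversable} says every point of $M$ is joined to $p\in S$ by a finite concatenation of rules; iterating the propagation along each successive rule shows $S=M$, and in particular $F(q)=q$ for every $q$, i.e.\ $F=\mathrm{Id}_M$.

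The only genuinely delicate point is the second step: one must invoke Corollary \ref{C:Fundamental} precisely where it applies (to horizontal vector fields), and rely on the parallelism of $HM$ to ensure that horizontal parallel transport sweeps out all of $H_{\gamma(t)}M$. Everything else is either the uniqueness of ODE solutions or the already-proved fact that $M$ is rule-connected.
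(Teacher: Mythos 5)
Your argument is correct and follows essentially the same route as the paper: show $F\circ\gamma=\gamma$ for rules out of a fixed point by uniqueness, propagate $F_*|_{H}=\mathrm{Id}$ along the rule via parallel transport and Corollary \ref{C:Fundamental}, then conclude by induction using traversability (Theorem \ref{T:Traversable}). The only cosmetic difference is that you transport horizontal vectors forward from $p$ while the paper transports from the point $q$ on the rule and compares at $p$; the content is identical.
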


\begin{proof}
Let $\gamma$ be any rule emanating from $p$. Since $F$ is an $H$-isometry with $F(p)=p$, we immediately get that $F \circ \gamma$ is also a rule emanating from $p$. However 
\[  (F \circ\gamma)^\prime(0)= (F_*)_{|p}(\gamma^\prime(0)) = \gamma^\prime(0),\] so $F \circ \gamma = \gamma$.

Now suppose $q$ is any point on $\gamma$ and that $v \in H_q M$. Clearly we have $F(q)=q$. Let $Y$ be the parallel horionztal vector field along $\gamma$ such that $Y_q =v$.  Then
\[ 0 = {F_*}_{|\gamma(t)} ( \nabla_{\gamma^\prime} Y) = \nabla_{{F_*}_{|\gamma(t)} (\gamma^\prime) } {F_*}_{|\gamma(t)} (Y) =  \nabla_{\gamma^\prime}  {F_*}_{\gamma(t)} (Y)\]
Thus ${F_*}_{\gamma(t)} Y$ is also parallel along $\gamma$.  However ${F_*}_{|p} Y = Y_p$ so we must have ${F_*}_{|q}(v)= {F_*}_{|q}(Y_q) =Y_q=v$.

Since $M$ is traversable by Theorem \ref{T:Traversable}, an easy induction argument now shows that $F = \text{Id}_M$.

\end{proof}

From this we can establish the fundamental theorem that establishes that $\mathscr{K}$ is finite dimensional and can be determined by more restricted information at a single point $p\in M$ than is required by Strichartz for the weak isometries.

\begin{thm}\label{T:Basic}
The map $ \chi_p \colon \mathscr{K}  \to T_p M \times \mathfrak{skew}(H_p)$ defined by
\[ \chi_p(K) = \left(K_{|p},  \left( \mB \right)_{|p} \right)\]
is injective.
\end{thm}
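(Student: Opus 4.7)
The plan is to show that $\ker \chi_p = \{0\}$. Suppose $K \in \mathscr{K}$ satisfies $K_p = 0$ and $\mB|_p = 0$. By Lemma \ref{L:Killing} and Lemma \ref{L:global}, $K$ integrates to a globally defined one-parameter group $\{\Phi_t\}_{t \in \mathbb{R}}$ of $H$-isometries of $M$, and the condition $K_p = 0$ makes $p$ a common fixed point. In view of Lemma \ref{L:IsoBasic} (applied on the connected component of $p$), it suffices to show that the one-parameter subgroup $A(t) := (\Phi_t)_*|_{H_p}$ of $GL(H_p)$ is trivial for every $t$.

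Since $A$ is a one-parameter subgroup of $GL(H_p)$, it is determined by its infinitesimal generator $A'(0) \in \mathfrak{gl}(H_p)$, so the heart of the argument is the identification $A'(0) = \mB|_{H_p}$. For $X \in H_p$ extended to a local horizontal vector field $\widetilde X$, the standard flow-versus-Lie-derivative formula at a fixed point gives $A'(0)X = -[K,\widetilde X]|_p$. The bracket $[K,\widetilde X]$ lies in $HM$ because $K$ is a strong $H$-Killing field, so it equals its own horizontal projection; applying the alternate expression $\mB(A) = (\nabla_K A - [K,A])_H$ from Definition \ref{D:Bil} together with $\nabla_K \widetilde X|_p = 0$ (forced by $K_p = 0$), I obtain $\mB(X)|_p = -[K,\widetilde X]|_p = A'(0)X$. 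Hence $A'(0) = \mB|_{H_p} = 0$ by hypothesis.

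With $A'(0) = 0$ the subgroup $A$ is identically the identity. Lemma \ref{L:IsoBasic} then forces $\Phi_t = \mathrm{Id}_M$ for every $t$, and differentiating at $t = 0$ yields $K \equiv 0$, establishing injectivity of $\chi_p$.

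The only step that demands real care is the identification of the flow-theoretic generator $A'(0)$ with the connection-theoretic object $\mB|_{H_p}$; once that translation is in place, Lemma \ref{L:IsoBasic} does the global work. The restriction to $\mathscr{K}$ rather than $\mathscr{K}^*$ is essential, not in this computation but rather because Lemma \ref{L:IsoBasic} — which promotes triviality of the linearization on $H_p$ to global triviality of $\Phi_t$ — is only available in the strong category, where $H$-isometries preserve the canonical connection and hence map rules to rules.
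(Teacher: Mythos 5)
Your argument is correct and follows essentially the same route as the paper's proof: use $K_p=0$ and $\mB|_p=0$ to conclude $[K,Y]_p = -\mB(Y)_p = 0$ for horizontal $Y$, so the flow of $K$ fixes $p$ and acts as the identity on $H_pM$, and then Lemma \ref{L:IsoBasic} forces the flow, and hence $K$, to be trivial. The only difference is presentational: you make explicit the one-parameter subgroup $A(t)=(\Phi_t)_*|_{H_p}$ and the identification of its generator with $\mB|_{H_p}$, which the paper leaves implicit.
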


\begin{proof}
Suppose $p \in M$ and that $K$ is an $H$-Killing field such that $K_{|p}=0$ and  $\left(\mB \right)_{|p}=0$. Since $\chi_p$ is linear, it suffices to show that $K \equiv 0$.

Let $Y$ be a horizontal vector field near $p$. Since $[K,Y]$ must be horizontal, we see that $ [K,Y] = \nabla_K Y - \mB(Y)$. The assumptions on $K$ then imply that 
$[K,Y]_{|p} =0 $. This means that the one parameter family of $H$-isometries generated by $K$ all act as the identity on $H_pM$. By Lemma \ref{L:IsoBasic},  this family consists only of the identity map and so $K \equiv 0$.

\end{proof}

\begin{cor}\label{C:KFD}
The Lie algebra $\mathscr{K}$ is finite dimensional.
\end{cor}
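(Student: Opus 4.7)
The plan is essentially to observe that Corollary \ref{C:KFD} is an immediate consequence of Theorem \ref{T:Basic}, which was the hard work. First I would verify that the map $\chi_p$ is linear in $K$: the evaluation $K \mapsto K_{|p}$ is obviously linear, and the map $K \mapsto \mathcal{B}_K$ is linear because $\mathcal{B}_K(A) = \nabla_A K_H + \tor(K,A)_H$ depends linearly on $K$ through both the covariant derivative and the torsion.

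Next, the codomain $T_pM \times \mathfrak{skew}(H_p)$ is finite dimensional with
\[ \dim\bigl(T_pM \times \mathfrak{skew}(H_p)\bigr) = \dim M + \binom{\dim H_pM}{2}. \]
Since Theorem \ref{T:Basic} asserts that $\chi_p$ is injective, a linear injection from $\mathscr{K}$ into a finite dimensional vector space forces $\mathscr{K}$ itself to be finite dimensional, with
\[ \dim \mathscr{K} \leq \dim M + \binom{\dim H_pM}{2}. \]

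There is no real obstacle here: the substantive content is entirely packaged inside Theorem \ref{T:Basic}, which controls a Killing field via finite pointwise data. The corollary is just the linear-algebraic harvest of that injectivity statement, and it simultaneously yields the crude dimension estimate that the later sections presumably refine using torsion constraints.
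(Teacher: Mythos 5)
Your proposal is correct and is exactly the argument the paper intends: the corollary is an immediate consequence of the injectivity of the linear map $\chi_p$ from Theorem \ref{T:Basic} into the finite dimensional space $T_pM \times \mathfrak{skew}(H_p)$, and your dimension bound is precisely the one recorded later in Corollary \ref{C:dim}. Nothing further is needed.
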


We now recall the classical theorem
\begin{thm}[Palais]\label{T:Palais}
 Let $G$ be a group of differentiable transformations of a manifold $M$ and $S$ is the set of all vector fields that generate global $1$-parameter subgroups of $M$. If $S$ generates a finite dimensional Lie algebra of vector fields on $M$, then $G$ is a Lie transformation group and $S$ is the Lie algebra of $G$.
\end{thm}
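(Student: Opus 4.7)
The plan is to realize $G$ as a quotient of a simply connected Lie group by a discrete normal subgroup, and then transfer the Lie group structure along this identification. Let $\mathfrak{g}$ denote the finite-dimensional Lie algebra of vector fields on $M$ generated by $S$. By Lie's third theorem, there exists a simply connected Lie group $\tilde{G}$ whose Lie algebra is canonically isomorphic to $\mathfrak{g}$.

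The central technical step is to integrate the inclusion $\mathfrak{g} \hookrightarrow \mathfrak{X}(M)$ to a smooth action $\tilde{G} \times M \to M$. The standard device is to consider on the product $\tilde{G} \times M$ the distribution $\mathcal{D}$ whose fiber at $(g,p)$ is spanned by pairs of the form $(X^L_g, X_p)$ as $X$ ranges over $\mathfrak{g}$, where $X^L$ denotes the corresponding left-invariant vector field on $\tilde{G}$. Involutivity of $\mathcal{D}$ is immediate from the fact that $\mathfrak{g}$ is a Lie subalgebra of $\mathfrak{X}(M)$, so by Frobenius there is a unique maximal leaf $\mathcal{L}_p$ through $(e,p)$ for each $p \in M$, and projection onto the first factor is a local diffeomorphism $\mathcal{L}_p \to \tilde{G}$. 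The completeness of every element of $S$, together with the fact that $S$ spans $\mathfrak{g}$, upgrades this to a covering projection, from which one extracts a globally defined action $g \cdot p$.

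Once the action is in hand, let $\Gamma \subset \tilde{G}$ be its kernel. Since $\mathfrak{g}$ injects into $\mathfrak{X}(M)$, the Lie algebra of $\Gamma$ is trivial, so $\Gamma$ is a discrete closed normal subgroup. The quotient $\tilde{G}/\Gamma$ is a Lie group acting smoothly and effectively on $M$, with Lie algebra precisely $\mathfrak{g}$.

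The last step is to identify $\tilde{G}/\Gamma$ with $G$. One direction is tautological: every element of the image of the action is a finite product of time-$t$ maps of elements of $S$, hence lies in $G$. The reverse inclusion follows because $S$ is defined as the set of generators of \emph{all} global one-parameter subgroups of $G$, so by the hypothesis that $\mathfrak{g}$ is finite dimensional every element of $G$ which lies in the identity component can be written via such flows. The main obstacle throughout is the integration step: one must ensure that the Frobenius leaves globally cover $\tilde{G}$ rather than merely a neighborhood of the identity, which is exactly where the completeness hypothesis on elements of $S$ is indispensable.
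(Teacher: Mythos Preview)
The paper does not actually prove this theorem: it is stated as a classical result, attributed to Palais \cite{Palais} with the formulation due to Kobayashi (Theorem~3.1 in \cite{KobayashiT}), and then immediately applied. There is therefore no proof in the paper to compare your attempt against.

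That said, your sketch follows the standard Palais--Kobayashi route (Lie's third theorem, Frobenius graph construction on $\tilde G\times M$, quotient by the ineffective kernel), but there are two genuine gaps worth flagging. First, you assert that ``$S$ spans $\mathfrak g$'', but the hypothesis is only that $S$ \emph{generates} $\mathfrak g$ as a Lie algebra; iterated brackets of complete fields are not automatically complete, and showing that every element of $\mathfrak g$ is complete (so that the leaves really cover all of $\tilde G$) is precisely the nontrivial core of Palais's argument, which you have not supplied. Second, your identification of $\tilde G/\Gamma$ with $G$ only accounts for the identity component: you give no argument that an arbitrary element of $G$ lies in the image of the $\tilde G$-action, and indeed the theorem as usually stated either concerns $G_0$ or requires a separate argument for the component group.
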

This theorem was originally due to Palais \cite{Palais} but this formulation is due to Kobayashi and appears as Theorem 3.1 in \cite{KobayashiT}. The reader is also referred to the work of D. Montgomery \cite{DMont} for an alternative approach.
 
Since, from Lemma \ref{L:global}, we know that all $H$-Killing fields are complete, we can immediately deduce the following.
\begin{cor}\label{C:LG}
$\text{Iso}(M)$, the group of smooth $H$-isometries,  is a Lie group.
\end{cor}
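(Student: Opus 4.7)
The plan is to apply Palais' theorem (Theorem \ref{T:Palais}) directly to the group $G=\text{Iso}(M)$. To do this I need to identify the set $S$ of vector fields on $M$ that generate global $1$-parameter subgroups of $\text{Iso}(M)$ and verify that $S$ generates a finite dimensional Lie algebra.

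First I would observe that, by Lemma \ref{L:Killing}(3), the correspondence $\Phi \mapsto \frac{d}{dt}_{|t=0} \Phi(t,\cdot)$ sets up a bijection between smooth $1$-parameter subgroups of $H$-isometries and $H$-Killing fields, and by Lemma \ref{L:Killing}(2) together with Lemma \ref{L:global} every $H$-Killing field is complete, so its flow is globally defined on $M$. Consequently the set $S$ of vector fields generating global $1$-parameter subgroups of $\text{Iso}(M)$ is exactly $\mathscr{K}$.

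Next, Corollary \ref{C:KFD} tells us that $\mathscr{K}$ is finite dimensional, and since $\mathscr{K}$ is already a Lie subalgebra of the vector fields on $M$ (the bracket of two $H$-Killing fields is an $H$-Killing field, by the standard computation using the Jacobi identity together with the defining conditions $[K,HM]\subset HM$, $[K,VM]\subset VM$ and $\mathcal{L}_K\langle\cdot,\cdot\rangle=0$), the Lie algebra it generates coincides with $\mathscr{K}$ itself and is therefore finite dimensional.

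Having verified the hypotheses of Theorem \ref{T:Palais}, I conclude that $\text{Iso}(M)$ is a Lie transformation group with Lie algebra $\mathscr{K}$. There is no real obstacle here: the entire difficulty has already been absorbed into the preceding results (completeness of Killing fields in Lemma \ref{L:global}, and the finite dimensionality coming from the rigidity statement in Theorem \ref{T:Basic} and its Corollary \ref{C:KFD}), so the proof amounts to quoting Palais' theorem.
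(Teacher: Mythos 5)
Your proposal is correct and is essentially the paper's own argument: the paper likewise deduces the corollary by combining the completeness of $H$-Killing fields (Lemma \ref{L:global}), the finite dimensionality of $\mathscr{K}$ (Corollary \ref{C:KFD}), and Palais' theorem (Theorem \ref{T:Palais}). Your additional remarks about the bijection with one-parameter subgroups and closure of $\mathscr{K}$ under bracket only make explicit what the paper leaves implicit.
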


Furthermore we get the following simple corollaries of Theorem \ref{T:Basic}.

\begin{cor}\label{C:pureV}
If $K \in \mathscr{K}_V$ then either $K \equiv 0$ or $K$ is non-vanishing. 
\end{cor}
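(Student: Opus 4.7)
The plan is to reduce the claim to Theorem~\ref{T:Basic} via the injectivity of the map $\chi_p$. The statement ``$K \equiv 0$ or $K$ is non-vanishing'' is the contrapositive of ``if $K_q = 0$ for some $q \in M$, then $K \equiv 0$,'' so I would fix a point $q$ where $K_q = 0$ and aim to conclude $\chi_q(K) = (0,0)$.

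The first coordinate of $\chi_q(K)$ is $K_q$, which vanishes by hypothesis. For the second coordinate, I would in fact show the stronger statement that $\mB \equiv 0$ globally on $M$, which immediately gives $\mB|_q = 0$. Since $K \in \mathscr{K}_V$ we have $K_H \equiv 0$, so the defining formula collapses to $\mB(A) = \tor(K,A)_H$. Split by type of argument:
\begin{itemize}
\item For $A \in HM$, Corollary~\ref{C:vertelem} applies (as $K \in \mathscr{K}_V \subset \mathscr{K}^*_V$ is a purely vertical weak $H$-Killing field) and yields $\tor(K,A)_H = 0$.
\item For $A \in VM$, part~(5) of Lemma~\ref{L:EP} gives $\mB(A) = 0$ directly, using that $K$ is a strong $H$-Killing field.
\end{itemize}
Thus $\mB \equiv 0$, and in particular $\chi_q(K) = (0,0)$. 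Theorem~\ref{T:Basic} then forces $K \equiv 0$.

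I do not expect any real obstacle: the result is essentially a bookkeeping corollary of the injectivity of $\chi_p$ combined with the structural vanishing of $\mB$ on a purely vertical $H$-Killing field. The only point requiring a moment's care is making sure the correct regularity hypothesis is invoked when citing Corollary~\ref{C:vertelem} (stated for weak vertical Killing fields, which covers our case) and Lemma~\ref{L:EP}(5) (which genuinely requires the strong Killing condition, and is available because $K \in \mathscr{K}_V$ rather than merely $\mathscr{K}^*_V$).
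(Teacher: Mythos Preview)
Your proof is correct and follows the paper's intended approach of invoking the injectivity of $\chi_q$ from Theorem~\ref{T:Basic}. A marginally shorter route to $(\mB)_{|q}=0$ is available without citing Corollary~\ref{C:vertelem} or Lemma~\ref{L:EP}(5): since $K_H\equiv 0$ one has $\nabla_A K_H=0$ everywhere, and since $K_q=0$ the term $\tor(K,A)_H$ vanishes at $q$ by tensoriality of torsion; your argument, however, has the bonus of establishing the stronger global fact $\mB\equiv 0$, i.e.\ $\mathscr{K}_V\subseteq\mathscr{K}_B$.
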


\begin{cor}\label{C:HNorm}
If $\dim \mathscr{K}_V = \dim VM$ then $M$ is $H$-normal and admits a strictly normal metric extension.
\end{cor}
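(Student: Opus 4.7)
By Corollary \ref{C:pureV}, a nonzero element of $\mathscr{K}_V$ is nowhere-vanishing, so the evaluation map $\mathrm{ev}_p \colon \mathscr{K}_V \to V_pM$, $K \mapsto K_p$, is injective; matching dimensions promote it to an isomorphism at each $p \in M$. Any basis $T_1, \ldots, T_v$ of $\mathscr{K}_V$ therefore gives a smooth global frame for $VM$. Corollary \ref{C:vertelem} now supplies $\tor(T_i, HM)_H \equiv 0$ for each $i$, and $C^\infty(M)$-bilinearity of $\tor(\cdot,\cdot)_H$ together with pointwise spanning upgrades this to $\tor(HM,VM) \subseteq VM$, i.e.\ $H$-normality.

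For the strictly normal extension, define $g$ by keeping the given inner product on $HM$, imposing $HM \perp VM$, and declaring $T_1, \ldots, T_v$ orthonormal on $VM$; smoothness is automatic because the $T_i$ form a global frame. Let $\nabla$ be the canonical connection of Theorem \ref{T:Connection} for this $g$, and fix $X \in HM$. Parallelism of $VM$ lets one expand $\nabla_X T_i = \omega_i^{\,j}(X)\, T_j$. Metric compatibility applied to the constant function $g(T_i,T_j)=\delta_{ij}$ forces
\[ \omega_i^{\,j}(X) + \omega_j^{\,i}(X) = 0. \]
Parallelism of $HM$ gives $\nabla_{T_i} X \in HM$ and the weak $H$-Killing condition gives $[X,T_i] \in HM$, so that
\[ g(\tor(X,T_i), T_j) = g(\nabla_X T_i, T_j) = \omega_i^{\,j}(X). \]
Torsion axiom (4) of Theorem \ref{T:Connection}, applied with the horizontal slot $Z = X \in V^0$ and the two vertical slots $T_i, T_j \in V^1$, then yields the symmetry $\omega_i^{\,j}(X) = \omega_j^{\,i}(X)$. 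The two relations together force $\omega \equiv 0$, hence $\nabla_X T_i = 0$ and $\tor(X,T_i)_V = 0$. Combined with $H$-normality this gives $\tor(X,T_i) = 0$ for every $i$; pointwise spanning extends the equality to $\tor(HM,VM) = 0$, so $g$ is strictly normal.

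The main obstacle is the second stage: strict normality is a property of a specific extension, so it must be established for a carefully chosen $g$. The key observation is that declaring the vertical Killing frame $g$-orthonormal causes the two symmetry constraints on the connection coefficients $\omega_i^{\,j}(X)$ — skew-symmetry coming from metric compatibility and symmetry coming from torsion axiom (4) — to collide, pinning them to zero and forcing the $T_i$ to be $H$-parallel; the rest is bookkeeping from Corollaries \ref{C:pureV} and \ref{C:vertelem}.
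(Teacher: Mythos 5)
Your proof is correct and follows essentially the same route as the paper: vertical Killing fields give a global frame for $VM$ (via Corollary \ref{C:pureV}/Theorem \ref{T:Basic}), $H$-normality follows from Corollary \ref{C:vertelem}, and the strictly normal extension is obtained by declaring that frame orthonormal. The only difference is that you spell out the "easy to verify" step of the paper — the collision between skew-symmetry from metric compatibility and the symmetry axiom of Theorem \ref{T:Connection} forcing $\nabla_X T_i=0$ — which is a faithful expansion of the intended argument.
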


\begin{proof}
From Theorem \ref{T:Basic}, we immediately see that the purely vertical $H$-Killing fields form a global frame for $VM$. That $M$ is $H$-normal, then follows immediately from Corollary \ref{C:vertelem}. If we define a metric extension by declaring the purely vertical $H$-Killing fields to be an orthonormal frame for $VM$, then it is easy to verify that for $T \in \mathscr{K}_V$, we have $\nabla_X T = 0 = [T,X]_V$  for all horizontal $X$. Thus the extension is strictly normal.

\end{proof}

For our final corollary of Theorem \ref{T:Basic}, we obtain a crude upper bound on the dimension of $\mathscr{K}$.

\begin{cor}\label{C:dim} 
If $\dim(HM)=k$ and $\dim (VM)=m$ then 
\[ \dim \mathscr{K}  \leq m +k + \frac{k(k-1)}{2} = m + \frac{k(k+1)}{2}. \]
\end{cor}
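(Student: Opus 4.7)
The plan is to read the bound directly off the injection $\chi_p$ constructed in Theorem \ref{T:Basic}. That theorem gives an injective linear map
\[ \chi_p \colon \mathscr{K} \to T_pM \times \mathfrak{skew}(H_pM), \]
so it suffices to compute the dimension of the target. Here $\dim T_pM = k+m$ and $\mathfrak{skew}(H_pM)$ is the space of skew-symmetric endomorphisms of a $k$-dimensional inner product space, which has dimension $\binom{k}{2} = \tfrac{k(k-1)}{2}$. Adding these gives
\[ \dim \mathscr{K} \leq (k+m) + \frac{k(k-1)}{2} = m + \frac{k(k+1)}{2}, \]
which is the stated estimate.

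There is essentially no obstacle: the whole content has already been packaged into Theorem \ref{T:Basic}, which established that a smooth $H$-Killing field is determined by its value and its $\mathcal{B}_K$-operator at a single point, together with Lemma \ref{L:EP}(b), which identified $\mathcal{B}_K|_p$ as a skew-symmetric endomorphism of $H_pM$. The only thing to confirm in writing is that these dimensions add and that the two equivalent forms of the bound agree, namely $k + \tfrac{k(k-1)}{2} = \tfrac{k(k+1)}{2}$. No further argument involving rules, torsion, or the bracket-generating hypothesis is needed at this stage, since all of that was absorbed into the proof of injectivity of $\chi_p$.
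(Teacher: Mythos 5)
Your proposal is correct and matches the paper's intent exactly: Corollary \ref{C:dim} is stated as an immediate consequence of the injectivity of the linear map $\chi_p$ from Theorem \ref{T:Basic}, and the bound is just the dimension count $\dim\bigl(T_pM \times \mathfrak{skew}(H_pM)\bigr) = (k+m) + \tfrac{k(k-1)}{2}$. Your added remark that the skew-symmetry of $\mB|_{H_pM}$ comes from Lemma \ref{L:EP} is consistent with how the target of $\chi_p$ was set up, so nothing further is needed.
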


\section{Torsion bounds on $\dim \text{Iso}_p(M)$}\label{Tor}

In this section, we shall  work at a fixed $p \in M$ and use the torsion structure of an sRC-manifold to refine our estimates on the dimension of the isotropy group at $p$, $\text{Iso}_p(M)$. 

We first note that if $K \in \mathscr{K}_p$ then $[K, \cdot]$ makes sense as a pointwise operator $T_pM \to T_p M$. If $A$ is any vector field then, since $K$ vanishes at $p$, 
\[ [K,A]_{|p} = \left(- \nabla_A K  -\tor(K,A) \right)_p.\]
The expression within parenthesis depends tensorially  on $A$  and thus depends solely on the value of $A$ at $p$.

Now fix $V^1_p = H^1_p \cap V_pM$. For any $F \in \text{Iso}_p(M)$, since both $H^1_p$ and $V_pM$ are invariant subspaces under $F_*$, we see that $V^1_p$ is invariant also. A key tool in our estimation will be subsets of $V^1_p$ that are also invariant under $\text{Iso}_p(M)$.
 
\begin{defn}\label{D:indicatrix}
For a pair of subspaces $E_1,E_2 \leq  H_pM$, define the {\em torsion indicatrix} of $(E_1,E_2)$ by
\[ \Omega_p^1(E_1,E_2) = \{ \tor(X_1,X_2) \colon X_i \in E_i, \|X_i\|=1 \} \subseteq V_p^1.\]
For simplicity of notation, we shall use $\Omega_p^1(E) = \Omega_p^1(E,E)$.
\end{defn}

We remark that torsion can be viewed as a linear map $\mathcal{T}  \colon \wedge^2 H_p \to V_p^1 $. Then $\Omega_p^1(E_1,E_2)$ can be viewed as the image of product of unit spheres  $\mathbb{S}^{}(E_1) \times \mathbb{S}^{}(E_2)$ under the composition $\mathcal{T} \circ \wedge $ where
\[ \wedge \colon E_1 \times E_2 \to \wedge^2 H_p \]
is the obvious wedge product.

It is  useful to note the following result.
\begin{lemma}\label{D:star}
For any subspace $E \leq H_pM$, $\Omega_p^1(E)$ is compact, symmetric and star-shaped about $0 \in V^1_p$.
\end{lemma}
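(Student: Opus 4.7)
The plan is to dispatch the three properties in increasing order of difficulty. For \emph{compactness}, the remark immediately preceding the lemma already does most of the work: $\Omega_p^1(E)$ is the image of the compact product $\mathbb{S}(E) \times \mathbb{S}(E)$ under the continuous composition $\mathcal{T} \circ \wedge$, and the continuous image of a compact set in a finite-dimensional vector space is compact. For \emph{symmetry about $0$}, the skew-symmetry of torsion gives $\tor(X_2,X_1) = -\tor(X_1,X_2)$, and replacing the ordered pair $(X_1,X_2)$ by $(X_2,X_1)$ preserves the unit-norm condition; thus $v \in \Omega_p^1(E)$ implies $-v \in \Omega_p^1(E)$.

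The interesting step is \emph{star-shapedness}. Here the naive approach of scaling one input fails, since $\tor(tX_1,X_2) = t\,\tor(X_1,X_2)$ leaves $tX_1$ off the unit sphere. The trick I would use is to absorb the scalar $t$ into an angle rather than into the lengths. Given $v = \tor(X_1,X_2)$ with $X_1,X_2 \in \mathbb{S}(E)$ and $t \in [0,1]$, I would perform the orthogonal decomposition
\[ X_2 = \alpha X_1 + \beta Y, \qquad Y \in E, \; Y \perp X_1, \; \|Y\|=1, \; \alpha^2+\beta^2 = 1, \]
which is possible provided $X_2$ is not a scalar multiple of $X_1$ (otherwise $v=0$ by skew-symmetry, and the conclusion is trivial since $0 \in \Omega_p^1(E)$). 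By skew-symmetry $v = \beta\,\tor(X_1,Y)$, and since $|t\beta| \le 1$ there is an angle $\phi$ with $\sin\phi = t\beta$. Setting
\[ \widetilde{X}_2 := \cos\phi\, X_1 + \sin\phi\, Y \in \mathbb{S}(E) \]
and using bilinearity together with $\tor(X_1,X_1)=0$, one computes $\tor(X_1,\widetilde{X}_2) = \sin\phi\,\tor(X_1,Y) = t v$, placing $t v$ in $\Omega_p^1(E)$.

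The only subtle point in the above is ensuring that $Y$ lies in $E$ (not merely in $H_pM$), which is immediate because $Y$ is built from $X_1,X_2 \in E$. Aside from this, the main conceptual obstacle I anticipated, namely how to realize scalar contraction of a torsion value while keeping both inputs on the unit sphere, is resolved by the angular substitution above. No further machinery from the paper is needed beyond the skew-symmetry of $\tor$ and the fact that $E$ is a linear subspace of $H_pM$.
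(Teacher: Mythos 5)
Your proof is correct and follows essentially the same route as the paper: the key star-shapedness step in both cases replaces the second argument by a unit vector of the form (scaled component transverse to $X_1$) plus (multiple of $X_1$), the latter being killed by $\tor(X_1,X_1)=0$, so that the scalar $t$ is absorbed while both inputs stay on the unit sphere; your angular parametrization is just an explicit version of the paper's ``choose $s$ so that $tY+sX$ has unit length.'' The compactness and symmetry arguments you supply are the intended trivial ones (the paper leaves them to the reader).
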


\begin{proof}
The compact and symmetric properties are trivial and left to the reader.  Now suppose that $X,Y \in E$ with $\|X\|=\|Y\| =1$.  Thus $\tor(X,Y)$ represents a generic point in $\Omega_p^1(E)$.  Now for any $0\leq t \leq 1$, we note that there must exist $s \in \mathbb{R}$ such that $ tY + s X$ is a unit length vector in $E$. Since
\[ t \tor(X,Y) =\tor(X, tY + s X )  \in \Omega_p^1(E)\]
we see that $\Omega_p^1(E)$ is star-shaped about $0$.
\end{proof}

\begin{defn}\label{D:stab}
For a subset of a vector space, $\Omega \subset V$, we define the {\em stabilizer of $\Omega$ relative to $V$} to be
\[ \text{St}(\Omega,V) = \{ L \in GL(V) \colon L(\Omega)=\Omega \}.\]
If the vector space is clear from context we shall drop it from the notation and use $\text{St}(\Omega)$ instead.
\end{defn}

If $V$ is a finite dimensional vector space, then $\text{St}(\Omega,V)$ is a closed Lie subgroup of $GL(V)$. Our key example of a stabilizer is $\text{St}(\Omega_p^1) = \text{St}(\Omega_p^1 , V_p^1)$. This is due to the trivial fact that if $F \in \text{Iso}_p(M)$ then $(F_*)_{|V_p^1} \in \text{St}(\Omega_p^1)$.

\begin{lemma}\label{L:dimsplit}
If we define a closed Lie subgroup of $\text{Iso}(M)$ by
\[ \text{Iso}_p^\bullet (M) = \{ F \in \text{Iso}_p(M) \colon (F_*)_{|V_p^1} = \text{Id} \}\]
then 
\[ \dim \text{Iso}_p(M) \leq \dim \text{Iso}_p^\bullet(M) + \dim \text{St}(\Omega_p^1).\]
\end{lemma}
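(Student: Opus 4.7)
The plan is to realize the inequality as a kernel/image dimension formula for a suitable Lie group homomorphism out of $\text{Iso}_p(M)$. Define
\[ \rho \colon \text{Iso}_p(M) \to GL(V_p^1), \qquad \rho(F) = (F_*)_{|V_p^1}. \]
The first task is to check that this makes sense. Since $F$ is an $H$-isometry fixing $p$, $F_* V_pM = V_pM$. Because $F$ preserves $HM$ and commutes with Lie brackets of smooth sections, it maps $H_p^1$ to itself, and hence preserves $V_p^1 = H_p^1 \cap V_pM$. Smoothness of $\rho$ is inherited from the smooth action of $\text{Iso}(M)$ on $M$, and $\rho$ is evidently a group homomorphism with kernel exactly $\text{Iso}_p^\bullet(M)$.

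Next I would verify that the image of $\rho$ lies in $\text{St}(\Omega_p^1)$. By Corollary \ref{C:Fundamental}, $F$ intertwines the torsion tensor, so $F_* \tor(X,Y) = \tor(F_*X, F_*Y)$ for horizontal $X, Y$. Since $F$ preserves the horizontal inner product, unit horizontal vectors are sent to unit horizontal vectors, so $\rho(F)$ maps a generator $\tor(X_1,X_2)$ of $\Omega_p^1$ to another element of $\Omega_p^1$. Running the same argument for $F^{-1}$ upgrades the inclusion to equality, so $\rho(F) \in \text{St}(\Omega_p^1)$.

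With $\rho$ in hand, the inequality is immediate: applying the standard kernel/image dimension formula for smooth Lie group homomorphisms gives
\[ \dim \text{Iso}_p(M) = \dim \ker \rho + \dim \rho(\text{Iso}_p(M)) \leq \dim \text{Iso}_p^\bullet(M) + \dim \text{St}(\Omega_p^1), \]
since $\rho(\text{Iso}_p(M))$ is an immersed Lie subgroup of $\text{St}(\Omega_p^1)$. There is no substantive obstacle: the only non-bookkeeping ingredient is the isometric invariance of torsion, which is already supplied by Corollary \ref{C:Fundamental}, and $\text{St}(\Omega_p^1)$ being a closed Lie subgroup of $GL(V_p^1)$ has been observed in the text preceding the lemma.
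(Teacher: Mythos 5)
Your proposal is correct and follows essentially the same route as the paper: both consider the homomorphism $F \mapsto (F_*)_{|V_p^1}$ from $\text{Iso}_p(M)$ into $\text{St}(\Omega_p^1)$ (invariance of the indicatrix being the content of Corollary \ref{C:Fundamental}, noted as a triviality in the text preceding the lemma), identify its kernel with $\text{Iso}_p^\bullet(M)$, and conclude by a rank--nullity count. The only cosmetic difference is that the paper passes immediately to the induced Lie algebra homomorphism with $\ker(L) \subseteq \mathscr{K}_p^\bullet$, whereas you argue at the group level; the content is identical.
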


\begin{proof} The map $F \mapsto (F_*)_{|V^1_p}$ is a Lie group homomorphism from $\text{Iso}_p(M)$ to $\text{St}(\Omega_p^1)$. Let $L$ be the associated Lie algebra homomorphism. It is then clear that $\ker(L) \subseteq \mathscr{K}^\bullet_p$. The result then follows from elementary linear algebra.

\end{proof}

The relationship between the operator $\mB$ and the torsion in the following lemma is the key to estimating the dimension of $\text{Iso}_p^\bullet(M)$.

\begin{lemma}\label{L:vd2}
 If $K \in \mathscr{K}^\bullet_p$ and  $X,Y \in H_p M$
 \[ \tor(X,\mB(Y)) + \tor(\mB(X), Y) = 0.\]
 \end{lemma}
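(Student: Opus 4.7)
The plan is to use the isotropy representation of $K$'s flow at $p$ together with the $H$-isometry invariance of torsion supplied by Corollary~\ref{C:Fundamental}. Let $\phi_t$ be the one-parameter subgroup of $H$-isometries generated by $K$. Since $K_p=0$, $\phi_t$ fixes $p$, so I may form the linear isotropy $L_t := (\phi_t)_{*,p}\in GL(T_pM)$ and its infinitesimal generator $A := \tfrac{d}{dt}\big|_{t=0} L_t$.

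First I would identify $A$ on the two subspaces that matter. A standard computation with Lie derivatives at a fixed point gives $AZ_p = -[K,Z]_p$ for any vector field $Z$, and this expression is tensorial in $Z_p$ since $K_p=0$. For $X_p \in H_pM$ the formula $\mB(X) = (\nabla_K X - [K,X])_H$ from Definition~\ref{D:Bil}, combined with $K_p=0$ and $[K,HM]\subseteq HM$, gives $AX_p = \mB(X_p)$. The hypothesis $K \in \mathscr{K}^\bullet_p$ is by definition that $L_t$ fixes $V^1_p$ pointwise, so $A$ annihilates $V^1_p$.

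Next I would apply Corollary~\ref{C:Fundamental} to each $\phi_t$ acting on the torsion tensor to get $L_t\,\tor(X_p,Y_p) = \tor(L_tX_p,L_tY_p)$ for $X_p,Y_p \in H_pM$. Differentiating at $t=0$ and inserting the identification $A|_{H_pM}=\mB$ yields the derivation identity
\[ A\,\tor(X_p,Y_p) \;=\; \tor(\mB(X),Y) + \tor(X,\mB(Y)). \]
The right-hand side is precisely the quantity the lemma asserts vanishes, so the proof reduces to showing $A\,\tor(X_p,Y_p) = 0$. For this I would observe that $\tor(X_p,Y_p)$ always lies in $V^1_p$: its horizontal component vanishes by condition~(iii) of Theorem~\ref{T:Connection} at $j=0$, i.e.\ $\tor(V^0,V^0)\cap V^0 = \{0\}$, while its vertical component lies in $V^1$ by the grading bracket condition of Definition~\ref{D:sRC}. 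Since $A$ kills $V^1_p$, the conclusion follows.

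The main obstacle, I expect, is the sign bookkeeping between $L_t$, $\mathcal{L}_K$ and $\mB$ and the careful invocation of tensoriality at the zero $p$ of $K$; once those are clean the result is essentially a one-line Leibniz rule, the decisive structural input being the canonical connection's identity $\tor(HM,HM)\cap HM = \{0\}$ which forces $\tor(X_p,Y_p)$ into $V^1_p$.
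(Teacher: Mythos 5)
Your proof is correct and is essentially the paper's argument in exponentiated form: the paper works directly at the infinitesimal level, writing $0=[K,\tor(X,Y)]_p=\tor([K,X],Y)_p+\tor(X,[K,Y])_p$ (using $[K,T]_p=0$ for sections $T$ of $V^1$ and $\tor(HM,HM)\subseteq V^1$) and then substituting $[K,X]_p=-\mB(X)_p$, which is exactly what you obtain by differentiating the flow-level identity from Corollary \ref{C:Fundamental} at the fixed point. The inputs are identical, so this is the same proof phrased through the isotropy representation rather than the Lie derivative.
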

 
 \begin{proof} For any smooth section $T$ of $V^1$, we must have $[K,T]_p=0$. Thus if we extend $X,Y$ to horizontal vector fields we have
 \begin{align*}
 0 & = [K, \tor(X,Y)]_p = \tor([K,X],Y)_p + \tor(X,[K,Y]_p) \\
 &= \tor( \mB(X) -\nabla_K X ,Y) + \tor (X,\mB(Y) -\nabla_K Y ).
 \end{align*}
 Since $K_p=0$ the result follows immediately.
 
  \end{proof}

As an interpretation of this result, choose an orthonormal frame for $H_pM$. Identify $\mB$ with the matrix of the skew-adjoint operator  $\mB: H_p M \to H_pM$  and let $\breve{T}$ be the matrix of the $V^1_p$-valued skew-symmetric bilinear form $\tor \colon H_p M\times H_p M \to V_p^1$. Then as matrices
\[ [\mB,\breve{T}]=0.\] 
Since $\breve{T}$ is a vector-valued matrix, this can provide a considerable restriction on the dimension of $\text{Iso}_p^\bullet(M)$.

To use the full power of Lemma \ref{L:dimsplit}, we must now develop methods for studying $\text{St}(\Omega^1_p)$. These are typically difficult to compute directly, but as we shall see, we can use decomposition methods to infer information on the structure of $\Omega^1_p$.

\begin{defn}\label{D:kernel}
The {\em torsion-kernel} at $p$, $\widetilde{H}^0$,  is the maximal subspace $\widetilde{H}^0 \subseteq H_pM$  with the property that
\[ \Omega_p^1(\widetilde{H}^0, H_pM) = 0.\]
We say that $M$ is {\em strongly non-integrable} at $p$ if $\widetilde{H}^0 = 0$.
\end{defn}

\begin{defn}\label{D:torsiondec}
A {\em weak torsion decomposition} at $p$ is an orthogonal decomposition $H_pM = \widetilde{H}^0 \oplus \bigoplus\limits_{i=1}^m\widetilde{H}^i$ 
such that $\Omega^1_p (\widetilde{H}^i,\widetilde{H}^j) =0$ if $i \ne j$. 
A weak torsion decomposition is {\em strong} if additionally there is a vector space decomposition $V^1_p =\bigoplus\limits_{i=1}^m\widetilde{V}^i$ such that $\Omega_p^1(\widetilde{H}^i) \subseteq \widetilde{V}^i, i=1,\dots,m.$
\end{defn}

For subsets $\Delta_1,\dots,\Delta_m$ contained in some vector space $V$, we define the convex sum
\[ \mathscr{C} \sum\limits_{i=1}^m \Delta_m  = \left\{  \sum\limits_{i=1}^m a_i d_i \colon d_i \in \Delta_i, a_i \geq 0\text{ for } i=1,\dots,m \text{ and } \sum\limits_{i=1}^m a_i \leq 1 \right\}.\]
Now we recall that each $\Omega_p^1(\widetilde{H}^i)$ is star-shaped and symmetric under multiplication by $\pm 1$. From a simple application of the Cauchy-Schwartz inequality, we can obtain the following result.

\begin{lemma}\label{L:torsiondec}
For any weak torsion decomposition, 
\[ \Omega_p^1(H_pM)=  \mathscr{C}\sum\limits_{i=1}^m \Omega_p^1(\widetilde{H}^i).\]\end{lemma}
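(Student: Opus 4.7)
The plan is to establish the set equality by proving both inclusions separately, with Cauchy--Schwartz doing the work for one direction and the star--shapedness of Lemma~\ref{D:star} handling a boundary subtlety in the other.

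For the inclusion $\Omega^1_p(H_pM) \subseteq \mathscr{C}\sum_{i=1}^m \Omega^1_p(\widetilde{H}^i)$, I would take an arbitrary representative $v = \tor(X,Y)$ with unit vectors $X,Y \in H_pM$ and decompose along the orthogonal sum as $X = X_0 + \sum_{i=1}^m X_i$ and $Y = Y_0 + \sum_{i=1}^m Y_i$. Terms involving $X_0$ or $Y_0$ are killed because $\widetilde{H}^0$ is the torsion kernel, and cross terms $\tor(X_i,Y_j)$ with $1\leq i\neq j\leq m$ vanish by the weak torsion decomposition property. What remains is
\[ v = \sum_{i=1}^m \|X_i\|\,\|Y_i\|\, \tor(\hat X_i,\hat Y_i), \]
where hats denote normalization of the nonzero components. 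Setting $a_i = \|X_i\|\,\|Y_i\|$ and $d_i = \tor(\hat X_i,\hat Y_i)\in\Omega^1_p(\widetilde{H}^i)$, Cauchy--Schwartz applied to the sequences $(\|X_i\|)$ and $(\|Y_i\|)$, combined with the orthogonality bounds $\sum_{i=1}^m \|X_i\|^2\leq\|X\|^2 = 1$ and its analogue for $Y$, delivers the required inequality $\sum a_i \leq 1$.

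For the reverse inclusion, given $v = \sum a_i d_i$ with $\sum a_i\leq 1$ and $d_i = \tor(X_i,Y_i)$ for unit $X_i, Y_i \in \widetilde{H}^i$, the natural candidates are
\[ X = \sum_{i=1}^m \sqrt{a_i}\,X_i, \qquad Y = \sum_{i=1}^m \sqrt{a_i}\,Y_i. \]
Orthogonality of the decomposition gives $\|X\|^2 = \|Y\|^2 = \sum a_i$, while the cross terms in $\tor(X,Y)$ again vanish by the weak decomposition, producing $\tor(X,Y) = v$ on the nose.

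The one point of genuine friction is the case $\sum a_i < 1$: here $X$ and $Y$ are strictly shorter than unit length, so they do not immediately certify that $v \in \Omega^1_p(H_pM)$. I would resolve this by rescaling to $\hat X = X/\|X\|$, $\hat Y = Y/\|Y\|$, which shows that $v/\sum a_i \in \Omega^1_p(H_pM)$, and then invoking Lemma~\ref{D:star} with $E = H_pM$ and scaling parameter $t = \sum a_i \in [0,1]$ to recover $v$ itself as an element of $\Omega^1_p(H_pM)$. The degenerate subcase $\sum a_i = 0$ forces $v = 0$, which likewise lies in $\Omega^1_p(H_pM)$ by the same star--shapedness (taking $t=0$ against any representative).
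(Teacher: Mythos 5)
Your proof is correct and takes essentially the same route as the paper: decompose unit vectors along the orthogonal blocks, discard the kernel and cross terms, and bound $\sum a_i$ via Cauchy--Schwartz (the paper's displayed bound is the equivalent $a^ib^i \le \tfrac{1}{2}\bigl((a^i)^2+(b^i)^2\bigr)$, though its text also invokes Cauchy--Schwartz). Your handling of the reverse inclusion by rescaling and then invoking the star-shapedness of Lemma \ref{D:star} is just a more explicit rendering of the step the paper asserts as ``clear'', and it is sound.
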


\begin{proof}
For unit length vectors $X,Y \in H_p$ we can decompose $X= a^i X_i$, $Y =b^j Y_j$ with $X_i,Y_i \in \widetilde{H}^i$ and unit length. Then
\[ X \wedge Y = \sum\limits_{i=1}^m a^i b^i X_i \wedge Y_i  \qquad \text{mod } \ker(\mathcal{T}).\]
By replacing $X_i$ by $-X_i$ as necessary, we can suppose each $a^ib^i \geq 0$. But  then
\[ 0 \leq \sum a^i b^i \leq \frac{1}{2} \sum (a^i)^2  + \frac{1}{2} \sum (b^i)^2 \leq 1.\] 
We can also clearly obtain express any convex sum 
\[ t^i X_i \wedge Y_i, \qquad t^i \geq0 , \, \sum t^i \leq 1\]
in the form $(a^i X_i ) \wedge (b^j Y_j)$ modulo $\ker(\mathcal{T})$ with $\sum (a^i)^2 = \sum(b^j)^2=1$.

 \end{proof}
 
 \begin{cor}\label{C:2dsplit}
 If $M$ admits a weak torsion decomposition at $p$ with each $ \dim \widetilde{H}^i \leq 2$ for $i=1,\dots,m$, then $\text{St}(\Omega_p^1)$ is discrete and $\dim \text{Iso}_p(M) = \dim \text{Iso}_p^\bullet(M)$.
 \end{cor}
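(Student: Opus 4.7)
The plan is to invoke Lemma \ref{L:dimsplit}, which reduces the corollary to the single claim that $\text{St}(\Omega_p^1)$ is discrete; combined with the trivial inclusion $\text{Iso}_p^\bullet(M) \subseteq \text{Iso}_p(M)$, this yields equality of dimensions once $\dim \text{St}(\Omega_p^1) = 0$ is established.

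First I would describe $\Omega_p^1(\widetilde{H}^i)$ explicitly for each block. When $\dim \widetilde{H}^i \leq 1$, every pair of unit vectors in $\widetilde{H}^i$ is a $\pm 1$ multiple of the other, so $\Omega_p^1(\widetilde{H}^i) \subseteq \{0\}$. When $\dim \widetilde{H}^i = 2$, I would pick an orthonormal basis $e_1^i, e_2^i$; for unit $X, Y \in \widetilde{H}^i$ one computes $X \wedge Y = \sin\theta \, e_1^i \wedge e_2^i$ for some angle $\theta$, so $\Omega_p^1(\widetilde{H}^i) = [-v_i, v_i]$ where $v_i := \tor(e_1^i, e_2^i)$. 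Lemma \ref{L:torsiondec} then realizes
\[ \Omega_p^1(H_pM) = \mathscr{C}\sum_{i=1}^m [-v_i, v_i] = \Bigl\{ \sum_{i=1}^m s_i v_i : \sum_{i=1}^m |s_i| \leq 1 \Bigr\}, \]
i.e.\ the image of the standard $m$-dimensional cross-polytope in $V_p^1$ under the linear map $(s_1,\ldots,s_m) \mapsto \sum s_i v_i$. This is a compact centrally symmetric polytope whose vertex set is contained in the finite set $\{\pm v_1, \ldots, \pm v_m\}$.

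Next I would check that $v_1, \ldots, v_m$ linearly span $V_p^1$: by definition $V_p^1$ is spanned by torsions of horizontal vectors, and by the decomposition every such torsion is a linear combination of the $v_i$. Consequently $\Omega_p^1(H_pM)$ has nonempty interior in $V_p^1$, so its finitely many vertices affinely span $V_p^1$; central symmetry then promotes this to a linear spanning. Finally, any $L \in \text{St}(\Omega_p^1) \subseteq GL(V_p^1)$ must permute the finite vertex set, and the kernel of the induced action on vertices is precisely the set of $L$ fixing every vertex, which equals $\{\text{Id}\}$ since the vertices span $V_p^1$. Thus $\text{St}(\Omega_p^1)$ injects into a finite symmetric group and is zero-dimensional.

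The only subtle point to be careful about is that some of the naive vertex candidates $\pm v_i$ may collapse into the relative interior of the polytope when the $v_i$ are linearly dependent or coincident. This is harmless: the true vertex set is a centrally symmetric subset of $\{\pm v_i\}$ which is still large enough to linearly span $V_p^1$, which is all the argument requires.
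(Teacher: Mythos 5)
Your proposal is correct and follows essentially the same route as the paper: each block indicatrix is a segment, Lemma \ref{L:torsiondec} exhibits $\Omega_p^1(H_pM)$ as a (polyhedral) convex sum whose linear stabilizer is discrete, and Lemma \ref{L:dimsplit} plus the inclusion $\text{Iso}_p^\bullet(M)\subseteq\text{Iso}_p(M)$ gives the dimension equality. You simply fill in details the paper leaves implicit (the cross-polytope description, the spanning of $V_p^1$ by the $v_i$, and the injection of $\text{St}(\Omega_p^1)$ into the permutation group of the vertex set), all of which are accurate.
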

 
 \begin{proof}
 If each torsion block other than the torsion kernel has dimension at most $2$, then each non-zero $\Omega(\widetilde{H}^i)$ consists of a closed line segment. The convex sum of a finite number of closed line segments will always be polyhedral in nature and so will have discrete symmetry group.
 
 \end{proof}

If we have a strong torsion decomposition where the component pieces of the sums are in the summands $\widetilde{V}^i$ which intersect trivially, then we can get a stronger result.  We note in particular that with a strong decomposition for each $i>0$, 
\[\Omega_p^1(\widetilde{H}^i) = \Omega_p^1(H_pM) \cap \widetilde{V}^i.\]

To make use of this structure, we shall need a lemma from basic linear algebra.

\begin{lemma}\label{L:linmap}
Suppose that the finite dimensional vector space $W =W_1 \times W_2$ and that  for $i=1,2$, $\Omega_i\subset W_i$ is a compact set that is star-shaped and symmetric about $0$ such that $\text{span}(\Omega_i)=W_i$. Let $\Omega$ be the convex sum of $\Omega_1$ and $\Omega_2$ in $W$ . 
 
 For any $L$ in the connected component of the identity in $\text{St}(\Omega)$, if we identify $W_1,W_2$ with subspaces of $W$ in the natural way, then $L(W_i) = W_i$ and $L_{|W_i} \in \text{St}(\Omega_i)$ for $i=1,2$.
 \end{lemma}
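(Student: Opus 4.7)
The plan is to pass from $\Omega$ to its convex hull and identify the stabilizer through the structure of extreme points. Set $K := \text{conv}(\Omega)$ and $K_i := \text{conv}(\Omega_i) \subset W_i$. Because each $\Omega_i$ is compact, symmetric and star-shaped about $0$ with $\text{span}(\Omega_i) = W_i$, each $K_i$ is a centrally symmetric convex body in $W_i$. Using $0 \in \Omega_1 \cap \Omega_2$, one checks that $K = \text{conv}(\Omega_1 \cup \Omega_2) = \mathscr{C}\sum(K_1, K_2)$ is a convex body in $W$ with $K \cap W_i = K_i$, and similarly that $\Omega \cap W_i = \Omega_i$. Since linear maps commute with convex hull formation, $\text{St}(\Omega) \subseteq \text{St}(K)$, so the identity component of $\text{St}(\Omega)$ lies inside that of $\text{St}(K)$. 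It therefore suffices to prove that every $L$ in the identity component of $\text{St}(K)$ preserves each $W_i$; the stabilizer claim will then follow from
\[ L(\Omega_i) = L(\Omega \cap W_i) = L(\Omega) \cap L(W_i) = \Omega \cap W_i = \Omega_i. \]

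The key structural observation is that the Minkowski functional of $K$ is an $\ell^1$-style direct sum:
\[ \|w_1 + w_2\|_K = \|w_1\|_{K_1} + \|w_2\|_{K_2}, \qquad w_i \in W_i, \]
which one verifies directly from the convex sum definition. From this identity one obtains the clean description
\[ \text{ext}(K) = \text{ext}(K_1) \sqcup \text{ext}(K_2). \]
Indeed, an extreme point of $K$ with both $W_1$- and $W_2$-components nonzero would be a nontrivial convex combination of its normalized projections (each lying in $K$ by the additive norm formula), and conversely any extreme point of $K_i$ remains extreme in $K$ because equality under the additive norm in a convex combination forces every competing decomposition to lie entirely in $W_i$. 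The two pieces $\text{ext}(K_i) = \text{ext}(K) \cap W_i$ are disjoint closed subsets of $\text{ext}(K)$, hence clopen in the subspace topology.

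Now let $L$ belong to the identity component of $\text{St}(K)$ and choose a continuous path $L_t$, $t \in [0,1]$, from the identity to $L$ inside this component. For $x \in \text{ext}(K_i)$ the curve $t \mapsto L_t(x)$ is a connected subset of $\text{ext}(K)$ beginning in the clopen piece $\text{ext}(K_i)$, so the entire curve remains in $\text{ext}(K_i)$. Thus $L(\text{ext}(K_i)) = \text{ext}(K_i)$, and since the extreme points of a symmetric convex body span its ambient space, $L(W_i) = W_i$, completing the argument. The main point demanding care is the additive Minkowski identity and the accompanying extreme-point decomposition: the original sets $\Omega_i$ are only star-shaped, not convex, so one has to track the convex hulls carefully and verify that they interact cleanly with the direct sum $W = W_1 \oplus W_2$ before the topological argument about clopen components of $\text{ext}(K)$ can be invoked.
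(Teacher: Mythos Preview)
Your argument is correct and follows the same overall strategy as the paper: show that the extreme (or extremal) points of the convex sum lie entirely in $W_1 \sqcup W_2$, observe that the two pieces are clopen in the set of extreme points, and then run a path argument inside the identity component to conclude that $L$ cannot swap them. The one genuine technical difference is that you first pass to the convex hull $K=\operatorname{conv}(\Omega)$ and exploit the $\ell^1$-Minkowski identity $\|w_1+w_2\|_K=\|w_1\|_{K_1}+\|w_2\|_{K_2}$ together with standard extreme-point theory for convex bodies, whereas the paper works directly with the (generally non-convex) set $\Omega$ and introduces an ad hoc notion of ``extremal point'' for compact sets, together with a short lemma that such points span. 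Your detour through the convex hull buys you access to Krein--Milman and avoids that custom development; the paper's version stays closer to the original object but has to reinvent a small piece of convex analysis. Either way the core mechanism is identical.
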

 
 \begin{proof} Define an extremal point of a subset $E\ne \emptyset$ of a vector space $V$, to be a point $x \in E$ such that there is no affine linear embedding of $(-\epsilon,\epsilon)$ into $E$ such that $0$ is sent to $x$. Let $E^e$ denote the set of extremal points of $E$. Next we claim that if $E$ is compact then, $E \subset \text{span}(E^e)$. To do this impose an arbitrary inner product $\aip{\cdot}{\cdot}{}$ on $V$. Any point of $E$ maximizing the associated norm, must then be an extremal point. Let $Y = \text{span}(E^e)$ and let $\pi^\bot \colon V \to Y^\bot$ be the projection onto its orthogonal complement. If $E \nsubseteq Y$, then $\mu= \max \{ \|\pi^\bot(x) \| \colon x \in E\} >0$ and $E_\mu = \{ x \colon E \colon \| \pi^\bot(x)\| = \mu\}$ is a non-empty, compact set. Clearly any affine linear embeddings of intervals into $E$ mapping $0$ onto a point of this set must be contained in an affine linear subspace   parallel  to  $Y$. But then any extremal point of $E_\mu$, viewed as a compact non-empty subset of this affine linear subspace, must also be an extremal point of $E$. Since such points would have to exist, we have a contradiction and $E \subseteq Y=  \text{span}(E^e)$.
  
Now returning to our set up, it is easy to check that since each $\Omega_i$ is star-shaped that 
\[\Omega^e =\left(  \Omega_1^e \times \{0\} \right) \cup  \left( \{0\} \times  \Omega_2^e\right).\]
 As $(0,0)$ is not an extremal point of $\Omega$, the points $(x_1,0)$ and $(0,x_2)$ are in different path-components of the set of extremal points of $\Omega$. Now clearly $L \in GL(\Omega)$ restricts to a homeomorphism $\Omega^e \to \Omega^e$. Therefore any $L$ is the connected component of the identity must induce homeomorphisms $\Omega_i^e \to \Omega_i^e$ for $i=1,2$. As $\Omega_i^e$ spans $W_i$ we must therefore have $L(W_i)=W_i$. Now with $W_i$ viewed as a subspace of $W$ in the natural way, $\Omega_i = \Omega \cap W_i$ and so $L(\Omega_i) =\Omega_i$.
 
 \end{proof}
 
  \begin{cor}\label{C:stc} $\dim \text{St}(\Omega) \leq \dim \text{St}(\Omega_1) + \dim\text{St}(\Omega_2)$.
 \end{cor}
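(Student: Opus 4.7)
The plan is to leverage Lemma \ref{L:linmap} to build an injective Lie group homomorphism from the identity component of $\text{St}(\Omega)$ into the product $\text{St}(\Omega_1)\times\text{St}(\Omega_2)$; the dimension bound then follows immediately.

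First I would observe that since the dimension of a Lie group is determined by its identity component, it suffices to bound $\dim \text{St}(\Omega)^\circ$. For any $L$ in $\text{St}(\Omega)^\circ$, Lemma \ref{L:linmap} gives that $L$ preserves each subspace $W_i$ and that the restriction $L|_{W_i}$ lies in $\text{St}(\Omega_i)$. This lets me define a map
\[ \Psi \colon \text{St}(\Omega)^\circ \longrightarrow \text{St}(\Omega_1)\times \text{St}(\Omega_2), \qquad \Psi(L) = \bigl( L|_{W_1}, L|_{W_2}\bigr). \]
The map $\Psi$ is clearly a smooth group homomorphism since restriction to invariant subspaces is smooth and respects composition.

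Next I would verify that $\Psi$ is injective. Suppose $\Psi(L) = (\text{Id}_{W_1},\text{Id}_{W_2})$. Since $W = W_1 \oplus W_2$, any vector $w\in W$ decomposes uniquely as $w = w_1 + w_2$ with $w_i \in W_i$, and then $L(w) = L|_{W_1}(w_1) + L|_{W_2}(w_2) = w_1+w_2 = w$. Hence $L = \text{Id}_W$, so $\ker \Psi$ is trivial.

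An injective Lie group homomorphism is an immersion, so
\[ \dim \text{St}(\Omega) = \dim \text{St}(\Omega)^\circ \leq \dim\bigl(\text{St}(\Omega_1)\times \text{St}(\Omega_2)\bigr) = \dim \text{St}(\Omega_1) + \dim \text{St}(\Omega_2), \]
which is the desired bound. There is no genuine obstacle here: all the analytic content is already packaged inside Lemma \ref{L:linmap}, and the corollary is simply a bookkeeping consequence of realizing $\text{St}(\Omega)^\circ$ as a subgroup of the product stabilizer.
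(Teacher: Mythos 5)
Your proof is correct and follows exactly the route the paper intends: Corollary \ref{C:stc} is stated as an immediate consequence of Lemma \ref{L:linmap}, and restricting any element of the identity component to the invariant subspaces $W_1,W_2$ gives an injective homomorphism into $\text{St}(\Omega_1)\times\text{St}(\Omega_2)$, whence the dimension bound. Nothing is missing; the bookkeeping with the identity component and the injectivity via $W=W_1\oplus W_2$ is precisely the intended argument.
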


 The lemma and its corollary extend to convex sums of more than two subsets in the obvious way. For our purposes, the primary use of the lemma is in the following result.
  
 \begin{thm}\label{T:decomposition}
Suppose $M$ admits a strong torsion decomposition at $p$. For any $F \in \text{Iso}_p(M)$, the push-forward $(F_*)_{|p}$ preserves the splittings of both $H_pM$ and  $V^1_p$.
 \end{thm}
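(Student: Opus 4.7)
The plan is to reduce the theorem to the stabilizer analysis already carried out in Lemmata~\ref{L:torsiondec} and~\ref{L:linmap}, and then transfer the resulting permutation of vertical summands back to the horizontal decomposition. For the setup, write $A := (F_*)_p|_{H_pM}$ and $L := (F_*)_p|_{V^1_p}$. By Corollary~\ref{C:Fundamental} the push-forward intertwines torsion, so $L\tor_p(X,Y) = \tor_p(AX, AY)$ for all $X, Y \in H_pM$; combined with the orthogonality of $A$, this gives $L \in \text{St}(\Omega_p^1)$. Preservation of $V^1_p$ by $(F_*)_p$ follows from the preservation of $V_pM$ together with the characterization of $V^1$ via images of horizontal torsion.

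The core of the argument is to show that $L$ permutes the summands $\widetilde{V}^i$. By Lemma~\ref{L:torsiondec} one has $\Omega_p^1 = \mathscr{C}\sum_{i=1}^m \Omega_p^1(\widetilde{H}^i)$, and the strong decomposition hypothesis places each block $\Omega_p^1(\widetilde{H}^i)$ in the direct summand $\widetilde{V}^i$. Repeating the extremal-point analysis from the proof of Lemma~\ref{L:linmap} yields that the extremal set $(\Omega_p^1)^e$ decomposes as $\bigsqcup_i \Omega_p^1(\widetilde{H}^i)^e$ with each piece confined to $\widetilde{V}^i \setminus \{0\}$. Since $L$ is linear, $L(\widetilde{V}^i)$ is a subspace of the same dimension as $\widetilde{V}^i$, and its intersection with $(\Omega_p^1)^e$ equals (by bijectivity of $L$) the image of the block $\Omega_p^1(\widetilde{H}^i)^e$. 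Using that distinct $\widetilde{V}^j$ meet only at the origin, one extends the span/dimension computation of Lemma~\ref{L:linmap} beyond the identity component to conclude that $L(\widetilde{V}^i) = \widetilde{V}^{\sigma(i)}$ for some permutation $\sigma$ of $\{1,\dots,m\}$.

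Finally, one transfers the permutation to $H_pM$. The intrinsic torsion-kernel is fixed, so $A(\widetilde{H}^0) = \widetilde{H}^0$. For $i \geq 1$ and $X \in \widetilde{H}^i$, decomposing $AX = \sum_k (AX)_k$ along the $\widetilde{H}^k$ and using the block-structure of the torsion gives, for every $Y \in H_pM$,
\[
\tor_p(AX, AY) = \sum_k \tor_p((AX)_k, (AY)_k) \in \bigoplus_k \widetilde{V}^k.
\]
The left side lies in $L(\widetilde{V}^i) = \widetilde{V}^{\sigma(i)}$, and linear independence of the $\widetilde{V}^k$ forces $\tor_p((AX)_k, (AY)_k) = 0$ for $k \neq \sigma(i)$ and every $Y$; non-degeneracy of the torsion within each non-kernel block then yields $(AX)_k = 0$, so $A(\widetilde{H}^i) \subseteq \widetilde{H}^{\sigma(i)}$, with equality by a dimension count via orthogonality of $A$. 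The main obstacle is the middle step: Lemma~\ref{L:linmap} as stated only treats elements of the identity component of $\text{St}(\Omega_p^1)$, so one must verify carefully that $L(\widetilde{V}^i)$ cannot be a ``diagonal'' subspace intersecting several $\widetilde{V}^j$ non-trivially, a ruling out that requires exploiting both the linear-subspace image of $L$ and the bijectivity of its action on the extremal set.
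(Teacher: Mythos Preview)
Your three–step structure (torsion kernel first, then the vertical splitting via the stabilizer, then transfer to the horizontal splitting) is exactly the paper's approach. The differences are in emphasis and in the strength of the conclusion.

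The paper applies Lemma~\ref{L:linmap} directly to $(F_*)_{|V^1_p}$ and concludes that each summand $\widetilde{V}^i$ is \emph{individually fixed}, i.e.\ $F_*(\widetilde{V}^i)=\widetilde{V}^i$, not merely permuted. It does not pause over the identity–component hypothesis of that lemma. You have, quite reasonably, flagged that hypothesis and therefore only claimed a permutation $L(\widetilde{V}^i)=\widetilde{V}^{\sigma(i)}$; your caution is well placed, and the extremal–point argument you sketch for extending Lemma~\ref{L:linmap} beyond the identity component really does need the extra care you mention, since the extremal sets $\Omega_p^1(\widetilde{H}^i)^e$ need not be connected. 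In short: the paper asserts the stronger ``each block fixed'' conclusion and invokes Lemma~\ref{L:linmap} for it, while you prove the weaker ``blocks permuted'' conclusion and correctly identify the gap that separates the two.

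For the final transfer to $H_pM$, the paper's argument is a bit more economical than yours. Rather than decomposing both $AX$ and $AY$, it supposes $\tilde X:=\pi^j F_*X\neq 0$ for some $j\neq i$, chooses $Y\in\widetilde{H}^j$ with $\tor(\tilde X,Y)\neq 0$ (possible since $\tilde X$ is orthogonal to the torsion kernel), and observes that
\[
F_*\,\tor\bigl(X,\,F_*^{-1}Y\bigr)=\tor(F_*X,Y)=\tor(\tilde X,Y)\in\widetilde{V}^j\setminus\{0\},
\]
while $\tor(X,F_*^{-1}Y)\in\widetilde{V}^i$; this contradicts $F_*(\widetilde{V}^i)=\widetilde{V}^i$. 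Your version, decomposing $AX$ and $AY$ simultaneously and invoking non-degeneracy within each block, reaches the same conclusion (and adapts cleanly to the permutation setting), but the paper's use of $F_*^{-1}Y$ avoids the double decomposition.
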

 
 \begin{proof} The first key observation is that for any $F \in \text{Iso}_p(M)$ and $X,Y \in H_pM$ we have
 \[ F_* \tor(X,Y) = \tor(F_* X,F_*Y) .\]
 From this it is trivial to show that $F_* \widetilde{H}^0 = \widetilde{H}^0$ and hence that $F_*$ preserves $(\widetilde{H}^0)^\bot = \widetilde{H}^1 \oplus \dots \oplus \widetilde{H}^m$.
 
 Now since $H^1_p$ is bracket-generated by $HM$ at $p$, we see that each $\Omega^1(\widetilde{H}^i)$ must span $\widetilde{V}^i$. Thus we can apply Lemma \ref{L:linmap} to see that $F_*$ preserves the splitting on $V^1_p$. 
 
Suppose that $X \in \widetilde{H}^i$ with $i\ne 0$ and that $\tilde{X} = \pi^j F_* X$ for some $0\ne j \ne i$. Since $\tilde{X} \in \widetilde{H}^j$ is not in the torsion-kernel, there must be some $Y \in \widetilde{H}^j$ such that $\tor(\tilde{X},Y) \ne 0$. Then
 \[  F_* \tor(X,F^{-1}_* Y ) = \tor( \tilde{X}, Y)  \in \widetilde{V}^j \backslash \{0\}. \]
 But since $\tor(X,F^{-1}_* Y) \in \widetilde{V}^i$, this is a contradiction. 
 
 \end{proof}

The consequence of this theorem is that when looking for torsion restrictions on the dimension of $\dim \mathscr{K}_p$, we can study each block of a strong torsion decomposition separately. In particular, if $h^i = \dim \widetilde{H}^i$ we can immediately improve Corollary \ref{C:dim} by noting
\[ \dim \mathscr{K}_p \leq \sum\limits_{i=0}^m \frac{h^i(h^i-1)}{2} .\]
However, using Lemma \ref{L:vd2} on each block will typically reduce this even further. We summarize this discussion in the following theorem, the proof of which is now trivial.

\begin{thm}\label{T:tbound}
Suppose that  $H_pM$ admits a strong torsion decomposition $H_pM = \tilde{H}_0\oplus \bigoplus\limits_{i=1}^m  \tilde{H}_i$ with $\dim \tilde{H}_i=h_i$, $i=0,\dots,m$.  Let 
\[ \mathfrak{h}_i =\{ A \in \mathfrak{skew}(\tilde{H}_i) \colon [A,\breve{T}_{ii} ]=0 \}\]
where $\breve{T}_{ii}$ denotes the $\tilde{H}_i$ block on the diagonal of the matrix $\breve{T}$ when written with respect to an orthonormal frame respecting the strong torsion decomposition.
Then
\begin{align*} \dim \mathscr{K}^\bullet_p &\leq \frac{h_0(h_0-1)}{2}  + \sum\limits_{i=1}^m  \dim \mathfrak{h}_i,  \\
 \dim \text{St}(\Omega_p^1) &\leq \sum\limits_{i=1}^m  \dim \text{St}(\Omega^1_p(\tilde{H}_i)),\\
\dim \text{Iso}_p(M) &\leq  \dim \mathscr{K}^\bullet_p+\dim \text{St}(\Omega_p^1).
\end{align*}
\end{thm}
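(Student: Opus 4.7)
The plan is to assemble the tools developed throughout Sections \ref{Iso} and \ref{Tor}. Inequality (c) is immediate from Lemma \ref{L:dimsplit} once we identify $\dim\text{Iso}^\bullet_p(M)$ with $\dim\mathscr{K}^\bullet_p$, so all of the real work lies in establishing the first two bounds.

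For (a), I would begin with Theorem \ref{T:Basic}: since $\chi_p$ is injective and $K_p=0$ for every $K\in\mathscr{K}^\bullet_p$, the dimension of $\mathscr{K}^\bullet_p$ is bounded by the dimension of the space of admissible values $\mB|_p\in\mathfrak{skew}(H_pM)$. To exploit the strong torsion decomposition, observe that the flow of any $K\in\mathscr{K}^\bullet_p$ lies in $\text{Iso}^\bullet_p(M)\subseteq\text{Iso}_p(M)$, so by Theorem \ref{T:decomposition} each $d\Phi_t|_p$ preserves the splitting $H_pM=\tilde{H}_0\oplus\bigoplus_{i=1}^m\tilde{H}_i$. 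Differentiating at $t=0$ and using the identification $\mB|_p=-[K,\cdot]|_{H_pM}$ (which holds because $K_p=0$ and $[K,HM]\subseteq HM$), I conclude that $\mB|_p$ is block-diagonal, $\mB|_p=\bigoplus_{i=0}^m\mB^{(i)}$ with each $\mB^{(i)}\in\mathfrak{skew}(\tilde{H}_i)$. The torsion-kernel block has $\breve{T}_{00}=0$, so $\mB^{(0)}$ ranges over all of $\mathfrak{skew}(\tilde{H}_0)$, contributing at most $\tfrac{h_0(h_0-1)}{2}$ degrees of freedom. For $i\geq 1$, Lemma \ref{L:vd2} applied to $X,Y\in\tilde{H}_i$ reads $\tor(X,\mB^{(i)}Y)+\tor(\mB^{(i)}X,Y)=0$, which is precisely the condition $\mB^{(i)}\in\mathfrak{h}_i$, summing to the stated bound.

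For (b), the strong decomposition together with Lemma \ref{L:torsiondec} gives $\Omega^1_p=\mathscr{C}\sum_{i=1}^m\Omega^1_p(\tilde{H}_i)$ with $\Omega^1_p(\tilde{H}_i)\subseteq\tilde{V}^i$, and the bracket-generating hypothesis forces each $\Omega^1_p(\tilde{H}_i)$ to span $\tilde{V}^i$ exactly as in the proof of Theorem \ref{T:decomposition}. The iterated form of Lemma \ref{L:linmap}, equivalently the $m$-fold extension of Corollary \ref{C:stc}, then yields $\dim\text{St}(\Omega^1_p)\leq\sum_{i=1}^m\dim\text{St}(\Omega^1_p(\tilde{H}_i))$.

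The only subtle point in the argument — the one I would flag as the main obstacle — is the infinitesimal passage from Theorem \ref{T:decomposition}, a group-level statement about $F\in\text{Iso}_p(M)$, to the block-diagonal form of $\mB|_p$ at the Lie-algebra level. Once this is recorded cleanly, every other step is bookkeeping using inequalities already proved.
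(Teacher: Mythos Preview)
Your proposal is correct and follows essentially the same approach as the paper, which in fact declares the proof ``now trivial'' after the preceding discussion and provides no separate argument. You have simply written out the details: Lemma~\ref{L:dimsplit} for (c), Theorem~\ref{T:decomposition} differentiated along the flow together with Lemma~\ref{L:vd2} for (a), and the iterated Corollary~\ref{C:stc} for (b). The point you flag as the main obstacle---passing from the group-level invariance in Theorem~\ref{T:decomposition} to block-diagonality of $\mB|_p$---is exactly the step the paper leaves implicit in the remark that ``we can study each block of a strong torsion decomposition separately.''
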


As a simple application, we can look at the structure of product manifolds and obtain the following.

\begin{cor}\label{C:product}
If for $i=1,2$, $M_i$ is an sRC-manifold that is strongly non-integrable at $p_i$, then
\[ \mathscr{K}_{(p_1,p_2)} (M_1 \times M_2) =\mathscr{K}_{p_1} (M_1) \times \mathscr{K}_{p_2}(M_2).\]
\end{cor}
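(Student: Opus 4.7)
The plan is to establish both inclusions in the claimed equality, under the natural identification that sends $(K_1,K_2) \in \mathscr{K}_{p_1}(M_1) \times \mathscr{K}_{p_2}(M_2)$ to the field on $M_1 \times M_2$ given by $(x_1,x_2) \mapsto K_1(x_1) + K_2(x_2)$. I would first equip the product with its natural sRC-structure $H = HM_1 \oplus HM_2$, $V = VM_1 \oplus VM_2$, and the product metric extension. A short verification against Theorem \ref{T:Connection} shows, by uniqueness, that the canonical connection on the product is the direct sum of the factor connections; consequently the torsion has no cross-factor components and parallel transport respects the factor splitting. The easy inclusion is then immediate: a Killing field trivially extended from a single factor remains an $H$-Killing field on the product and vanishes at $(p_1,p_2)$.

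For the reverse inclusion, I would first record that $H_{(p_1,p_2)} = H_{p_1}M_1 \oplus H_{p_2}M_2$ together with $V^1_{(p_1,p_2)} = V^1_{p_1} \oplus V^1_{p_2}$ is a strong torsion decomposition at the base point. Cross-factor torsion vanishes by the product structure, and if $X = X_1 + X_2$ lay in the torsion-kernel then $\tor(X_1, Y_1) = 0$ for all $Y_1 \in H_{p_1}M_1$, forcing $X_1 = 0$ by strong non-integrability of $M_1$ at $p_1$ (and symmetrically $X_2 = 0$). Hence Theorem \ref{T:decomposition} applies to every $F \in \text{Iso}_{(p_1,p_2)}(M_1 \times M_2)$.

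Given $K \in \mathscr{K}_{(p_1,p_2)}(M_1 \times M_2)$ with flow $\Phi_t$, I would then show that $\Phi_t$ globally preserves the slice $M_1 \times \{p_2\}$. For any rule $\gamma_1$ on $M_1$ based at $p_1$, the lift $\tilde{\gamma}(s) = (\gamma_1(s), p_2)$ is a rule on the product with initial tangent in $H_{p_1}M_1$. Since $\Phi_t \in \text{Iso}_{(p_1,p_2)}$, Theorem \ref{T:decomposition} tells us $(\Phi_t)_*|_{(p_1,p_2)}$ preserves the splitting, so the initial tangent of $\Phi_t \circ \tilde{\gamma}$ still lies in $H_{p_1}M_1$. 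Because parallel transport splits along the product, this rule never leaves $M_1 \times \{p_2\}$. Traversability of $M_1$ via Theorem \ref{T:Traversable} then forces $\Phi_t(M_1 \times \{p_2\}) \subseteq M_1 \times \{p_2\}$, and symmetrically for $\{p_1\} \times M_2$. The restrictions define $H$-isometries $\Phi_t^{(i)}$ of each $M_i$ fixing $p_i$, whose generating Killing fields $K_i$ lie in $\mathscr{K}_{p_i}(M_i)$.

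To close I would compare $K$ with $K_1 + K_2$. Both vanish at $(p_1,p_2)$, and by the splitting preservation of $(\Phi_t)_*|_{(p_1,p_2)}$ together with the agreement $\Phi_t|_{M_1 \times \{p_2\}} = \Phi_t^{(1)}$ (and its mirror), the full derivative $(\Phi_t)_*|_{(p_1,p_2)}$ is the block diagonal of $(\Phi_t^{(1)})_*|_{p_1}$ and $(\Phi_t^{(2)})_*|_{p_2}$; differentiating at $t=0$ yields that $\mB|_{(p_1,p_2)}$ for $K$ equals the block diagonal of $\mB|_{p_1}$ for $K_1$ and $\mB|_{p_2}$ for $K_2$, which is exactly $\mB|_{(p_1,p_2)}$ for $K_1 + K_2$. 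Injectivity of $\chi_{(p_1,p_2)}$ in Theorem \ref{T:Basic} then gives $K = K_1 + K_2$. The principal obstacle is the third step: rigorously verifying that the product structure of the canonical connection confines rules with initial tangent in one factor to the corresponding slice. This rests on the direct-sum nature of the canonical connection, which in turn uses the uniqueness clause of Theorem \ref{T:Connection} together with the automatic vanishing of cross-factor torsion.
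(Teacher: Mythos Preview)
The paper states this corollary without proof, positioning it as a direct application of the strong torsion decomposition machinery (Theorem~\ref{T:decomposition}) just established. Your argument supplies exactly the details one would expect and uses the same circle of ideas: the product splitting is a strong torsion decomposition with trivial torsion--kernel (by strong non-integrability), Theorem~\ref{T:decomposition} then forces $(\Phi_t)_*|_{(p_1,p_2)}$ to respect the factor splitting on $H$, and you finish by identifying the factor Killing fields and invoking the injectivity of $\chi_{(p_1,p_2)}$ from Theorem~\ref{T:Basic}.

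One step deserves to be made more explicit. To pass from a single rule to traversability you must iterate over concatenations of rules, and for that you need $(\Phi_t)_*$ to preserve the $HM_1$--factor not only at $(p_1,p_2)$ but at every intermediate endpoint $(q_1,p_2)$. Theorem~\ref{T:decomposition} by itself only gives the splitting at the fixed point. The missing observation is the parallel-transport argument of Lemma~\ref{L:IsoBasic}: along a rule $\tilde\gamma \subset M_1\times\{p_2\}$, parallel transport keeps $HM_1$ inside $HM_1$ (because the canonical connection is the direct sum), and since an $H$-isometry intertwines horizontal parallel transport (Corollary~\ref{C:Fundamental}), $(\Phi_t)_*$ must carry $H_{q_1}M_1$ into the $HM_1$--factor at $\Phi_t(q_1,p_2)$. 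With that inductive step in place your slice-preservation argument goes through, and the remainder (restricting to get $\Phi_t^{(i)}$, then comparing $K$ with $K_1+K_2$ via $\chi_{(p_1,p_2)}$) is clean. What you flag as the ``principal obstacle'' --- that the product connection is the direct sum, hence rules with initial tangent in one factor stay in that slice --- is indeed the right thing to verify, and your appeal to the uniqueness clause of Theorem~\ref{T:Connection} is the correct way to do it.
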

Without the strongly non-integrable condition, it is possible that there will be interaction between the torsion-kernels.

Next we note that Theorem \ref{T:tbound} and the above discussion only take into account torsion as a map $HM \times HM \to V^1$.  If the sRC-manifold $M$ is not $H$-normal then we get alternative torsion restrictions on $\dim \text{Iso}_p^\bullet$. To see this, define for any  $U \in V_p^1$ a linear map $\mathcal{T}_U^0 \colon H_p \to H_p$ by $ \mathcal{T}_U^0(X)  =\pi^0 \tor(U,X)$. This map is independent of the choice of metric extension and vanishes identically if $M$ is $H$-normal.

Now for $K \in \mathscr{K}_p^\bullet$ we see
\begin{equation}\label{E:commutes}
\begin{split}
 \mB \circ \mathcal{T}_U^0(X)& = - [K,\mathcal{T}_U^0(X)]= -[ K,\pi^0 \tor(U,X)]  \\
 & = -\pi^0 \tor(U,[K,X]) =\mathcal{T}_U^0 \circ  \mB (X). \end{split}\end{equation}
 Now each $\mathcal{T}_U^0$ is a self-adjoint operator on $H_p$ and so there is a complete decomposition of $H_p$ into orthogonal eigenspaces of $\mathcal{T}_U^0$. The commutation property of \eqref{E:commutes} then implies that $\mB$ must preserve this eigenspace decomposition. 
 
Unfortunately however, if $\dim V^1_p >1$, the maps $\mathcal{T}^0_U$ and $\mathcal{T}^0_{\tilde{U}}$ for different $U,\tilde{U}$ typically do not commute. This means it typically will not be possible to consider mutual eigenspace decompositions across all such operators $\mathcal{T}^0_U$. 

Suppose that  $U \in V^1_p$ and that $H_p M = \bigoplus\limits_{i=1} ^m E_i $ where each $E_i$ is an eigenspace of $\mathcal{T}^0_U$ with $\dim E_i =r_i$.  A first crude use is to note that
\begin{equation}
\dim \mathscr{K}^\bullet \leq \sum\limits_{i=1} ^m  \frac{r_i(r_i-1)}{2} .
\end{equation}
Using Lemma \ref{L:vd2}, we can improve this. Consider an orthonormal basis for $H_pM$ respecting the eigenspace decomposition. Writing $\breve{T}$ and $\mB$ as matrices with respect to this matrix, we can break the matrices into block components $\breve{T}_{ij}$ and $(\mB)_{ij}$ corresponding to the various eigenspaces. The block components of $\mB$ must lie on the diagonal. Hence, since $[\mB,\breve{T}] = 0$ we must have $(\mB)_{ii} \breve{T}_{ij} = \breve{T}_{ij} (\mB)_{jj}$ for all $i,j$.

We summarize this discussion as the following alternative to Theorem \ref{T:tbound}.

\begin{thm}\label{T:tbound2}
With the notation as above,  
\[ \dim \mathscr{K}_p^\bullet \leq \dim \{ B = \text{diag}(B_1,\dots,B_m) \colon  B_i \in \mathfrak{skew}(E_i), B_i \breve{T}_{ij} = \breve{T}_{ij} B_j \}.\]
\end{thm}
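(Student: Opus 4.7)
The plan is to exhibit an explicit linear map from $\mathscr{K}_p^\bullet$ into the vector space on the right-hand side of the inequality, and show that this map is injective; the dimension bound then follows from elementary linear algebra.

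First I would invoke Theorem \ref{T:Basic}: since any $K \in \mathscr{K}_p^\bullet \subseteq \mathscr{K}_p$ vanishes at $p$, the map $\chi_p$ restricted to $\mathscr{K}_p^\bullet$ collapses to $K \mapsto (\mathcal{B}_K)_p$, and injectivity of $\chi_p$ forces this restricted map to be injective as well. So it suffices to show that the image of this map lands inside the claimed space of block-diagonal skew-symmetric matrices satisfying the commutation relations.

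Next I would verify each of the three structural conditions in turn. Skew-symmetry $(\mathcal{B}_K)_p \in \mathfrak{skew}(H_pM)$ is immediate from Lemma \ref{L:EP}(b). For block-diagonality, the commutation relation $\mathcal{B}_K \circ \mathcal{T}_U^0 = \mathcal{T}_U^0 \circ \mathcal{B}_K$ established in equation \eqref{E:commutes} (which uses exactly that $K_p = 0$ and $[K,U]_p = 0$ for $U \in V_p^1$, i.e.\ the $\bullet$ condition) shows that $\mathcal{B}_K$ commutes with the self-adjoint operator $\mathcal{T}_U^0$. Hence $\mathcal{B}_K$ preserves each eigenspace $E_i$, and with respect to an orthonormal basis respecting the decomposition $H_pM = \bigoplus E_i$, the matrix of $(\mathcal{B}_K)_p$ is block-diagonal $\mathrm{diag}(B_1,\dots,B_m)$ with $B_i \in \mathfrak{skew}(E_i)$. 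Finally, for the off-diagonal commutation relations, Lemma \ref{L:vd2} gives $[\mathcal{B}_K, \breve{T}]=0$ as matrices, and reading off the $(i,j)$-block of this equation yields precisely $B_i \breve{T}_{ij} = \breve{T}_{ij} B_j$ (the left-hand side of $[\mathcal{B}_K,\breve{T}]$ in block form picks up $(\mathcal{B}_K)_{ii}\breve{T}_{ij}$ from the first term and $\breve{T}_{ij}(\mathcal{B}_K)_{jj}$ from the second, since $\mathcal{B}_K$ is block-diagonal).

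Combining these three observations, the linear map $K \mapsto (\mathcal{B}_K)_p$ sends $\mathscr{K}_p^\bullet$ injectively into the vector space
\[ \{ B = \mathrm{diag}(B_1,\dots,B_m) \colon B_i \in \mathfrak{skew}(E_i),\ B_i \breve{T}_{ij} = \breve{T}_{ij} B_j\}, \]
and the dimension inequality follows.

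There is no real obstacle here; the theorem is essentially a bookkeeping synthesis of Theorem \ref{T:Basic}, Lemma \ref{L:EP}(b), Lemma \ref{L:vd2}, and equation \eqref{E:commutes}. The only point requiring mild care is to ensure that the block decomposition induced by $\mathcal{T}_U^0$'s spectral decomposition is the same one used to expand $\breve{T}$ in block form, so that the two commutation relations ($[\mathcal{B}_K,\mathcal{T}_U^0]=0$ giving block-diagonality, and $[\mathcal{B}_K,\breve{T}]=0$ giving the off-diagonal constraint) can be read off consistently from a single orthonormal frame adapted to the eigenspaces $E_i$.
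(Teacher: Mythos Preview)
Your proposal is correct and follows essentially the same approach as the paper: the theorem is stated there as a summary of the immediately preceding discussion, which assembles exactly the ingredients you identify --- injectivity of $K\mapsto(\mathcal{B}_K)_p$ from Theorem~\ref{T:Basic}, block-diagonality from the commutation \eqref{E:commutes}, and the off-diagonal constraints $B_i\breve{T}_{ij}=\breve{T}_{ij}B_j$ from Lemma~\ref{L:vd2}. Your closing caveat about using a single adapted orthonormal frame is exactly the right consistency check.
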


It should be remarked here that the operators $\mathcal{T}^0_U$ will not typically preserve a strong torsion decomposition, so this must be regarded as an alternative rather than complementary decomposition method.

\subsection{sRC-manifolds of large step size}

For regular sRC-manifolds of large step size, the above results can be applied verbatim, but they do not take into account any of the higher order torsion of the manifold.  Non-regular $H$-isometries typically will not preserve any higher order constructions but regular $H$-isometries will. The variety of different types of structure that can appear make it difficult to produce any general results. However, we shall outline one way in which the ideas of this section can be applied in the higher step case.

For the remainder of this section, we shall assume that $M$ admits a regular grading. We shall then explore the additional constraints on the regular $H$-isometry group $\text{Iso}^R_p(M)$. We can now assume that each $V_p^j$ is invariant under $F_*$ rather than just $V^1_p$. If $M$  admits a strong torsion decomposition, then our results can be applied to each block separately.

 \begin{defn}\label{D:TOR}
 We define the {\em step $m$ torsion} to be the $m+1$ tensor defined inductively by $\Tor[1]=\tor$ and
\[  \Tor[m](Z_1,\dots,Z_{m+1}) = \tor( Z_1,\Tor[m-1](Z_2,\dots,Z_{m+1})).\]
 \end{defn}
 \begin{lemma}\label{L:torj}
 For a regular sRC-manifold, the restricted multi-linear map \[\mathcal{T}^m = \pi^m \circ \Tor[m] \colon \prod\limits_{k=1}^{m+1} HM \to V^{m} \] depends only on the choice of grading $V_1,\dots,V_{r-1}$ not the metric extension.
  \end{lemma}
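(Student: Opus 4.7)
The plan is to prove the lemma by induction on $m$, reducing at each step to Corollary \ref{C:Extend}.

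For the base case $m=1$, the map is $\mathcal{T}^1 = \pi^1 \circ \tor$ applied to pairs of horizontal vectors. By Theorem \ref{T:Connection}, $\tor(X,Y)$ is independent of the metric extension when $X,Y$ are horizontal, and the projection $\pi^1$ depends only on the grading. So the base case is immediate.

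For the inductive step, I would first establish, by a secondary induction using the bracket condition from Definition \ref{D:sRC} together with the fact that every $V^j$ is parallel, that $\Tor[m-1]$ takes horizontal $m$-tuples into $HM \oplus V^1 \oplus \dots \oplus V^{m-1}$. Thus we may decompose
\[
\Tor[m-1](X_2,\dots,X_{m+1}) = \sum_{j=0}^{m-1} T^j, \qquad T^j \in V^j,
\]
where by definition $T^{m-1} = \mathcal{T}^{m-1}(X_2,\dots,X_{m+1})$, which by the inductive hypothesis depends only on the grading. Then
\[
\mathcal{T}^m(X_1,\dots,X_{m+1}) = \sum_{j=0}^{m-1} \pi^m \tor(X_1, T^j).
\]
For $j < m-1$, the expansion $\tor(X_1,T^j) = \nabla_{X_1} T^j - \nabla_{T^j} X_1 - [X_1,T^j]$ combined with the parallelism of the grading and the regular bracket condition $[HM,V^j] \subseteq HM \oplus V^1 \oplus \dots \oplus V^{j+1}$ shows that $\tor(X_1,T^j) \in HM \oplus V^1 \oplus \dots \oplus V^{j+1}$ with $j+1 < m$, so these terms are killed by $\pi^m$. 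Only the $j=m-1$ term survives, leaving
\[
\mathcal{T}^m(X_1,\dots,X_{m+1}) = \pi^m \tor\bigl(X_1, \mathcal{T}^{m-1}(X_2,\dots,X_{m+1})\bigr).
\]
By the inductive hypothesis $T^{m-1}$ is grading-dependent only, and Corollary \ref{C:Extend} (with $j = m-1$) gives that $\pi^m \tor(T^{m-1},X_1)$ depends only on the grading for any $T^{m-1} \in V^{m-1}$ and $X_1 \in HM$. Combining these completes the induction.

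The main obstacle is the containment statement $\tor(X,T^j) \in HM \oplus V^1 \oplus \dots \oplus V^{j+1}$, which requires careful bookkeeping with the parallelism axioms of Theorem \ref{T:Connection} and the regular grading condition. Once this containment is in hand, the rest is a clean reduction to Corollary \ref{C:Extend} via the inductive hypothesis.
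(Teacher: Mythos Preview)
Your proof is correct and is precisely the inductive argument the paper implicitly expects: the lemma is stated without proof in the paper, and your reduction to Corollary~\ref{C:Extend} via the containment $\tor(X_1,T^j) \in HM \oplus V^1 \oplus \dots \oplus V^{j+1}$ (which follows exactly as you outline from parallelism of each $V^j$ and the graded bracket condition) is the natural way to fill in the details.
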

 
 The {\em torsion indicatrix at point $p$ of step $m+1$} is defined by
 \[ \Omega^m_p = \{\mathcal{T}^m(X_1,\dots,X_{m+1} ) \colon X_i \in H_p M, \|X_i\|=1\} \subset V^m_p. \]
 The {\em full torsion indicatrix} is then 
 \[ \Omega_p = \prod\limits_{m=1}^r \Omega^m_p. \]
 It is clear that
\[  \text{St}(\Omega_p) = \bigoplus\limits_{m=1}^r \text{St}(\Omega^m_p, V^m_p).\]
A regular sRC-manifold is said to be {\em vertically discrete at step $j+1$} at a point $p \in M$   if $\text{St}(\Omega^j_p,V^j_p)$ is a discrete group and {\em vertically discrete} if the full group $\text{St}(\Omega_p)$ is discrete. 

The following two lemmas are trivial and the proofs are left to the reader.

\begin{lemma}\label{L:FR}
If $F \in \text{Iso}^R_p(M)$ then $(F_*)_{|V_pM} \in \text{St}(\Omega_p)$.
\end{lemma}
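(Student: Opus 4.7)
The plan is to unwind the definition of $\mathcal{T}^m$ and show that each step-$m$ torsion map intertwines with $F_*$ when restricted to horizontal unit vectors, by induction on $m$. Since $\text{St}(\Omega_p) = \bigoplus_{m=1}^r \text{St}(\Omega_p^m, V^m_p)$ and a regular $H$-isometry splits as $F_* = \bigoplus_m (F_*)_{|V_p^m}$, it suffices to prove $F_* \Omega^m_p = \Omega^m_p$ for each $m$, and for this it is enough to establish the equivariance
\[ F_* \mathcal{T}^m(X_1,\dots,X_{m+1}) = \mathcal{T}^m(F_* X_1,\dots,F_* X_{m+1}) \]
for horizontal inputs, together with the fact that $F$ is an $H$-isometry (so it maps the unit sphere of $H_pM$ to itself).

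For the base case $m=1$, Corollary \ref{C:Fundamental} says $F_* \tor(X,Y) = \widetilde{\tor}(F_*X,F_*Y)$ on horizontal fields, and regularity of $F$ gives $F_* \circ \pi^1 = \pi^1 \circ F_*$ on the image of $\tor(H,H)$, proving the claim at step $1$. For the inductive step at level $m$, first observe that for horizontal $X_2,\dots,X_{m+1}$ the iterated torsion $\Tor[m-1](X_2,\dots,X_{m+1})$ decomposes under $\pi^0 \oplus \cdots \oplus \pi^{m-1}$, and because the grading is regular and the connection keeps each $V^j$ parallel, the outermost $\pi^m \tor(X_1, \cdot)$ kills every summand lying in $V^0 \oplus \cdots \oplus V^{m-2}$; only the $V^{m-1}$ component survives. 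So I would reduce to showing that for $T \in V^{m-1}$ and $X \in H$,
\[ F_* \pi^m \tor(X,T) = \pi^m \tor(F_* X, F_* T). \]
This follows from Corollary \ref{C:Extend}: choosing $H^0$- and $V^{m-1}$-sectional extensions makes $\pi^m \tor(X,T) = -\pi^m[X,T]$, and then $F_*[X,T] = [F_*X, F_*T]$ together with $F_* \circ \pi^m = \pi^m \circ F_*$ (regular $H$-isometry) delivers the identity. Feeding the inductive hypothesis into $T = \pi^{m-1} \Tor[m-2](X_2,\dots,X_{m+1})$ completes the induction.

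Combining, for unit horizontal $X_1,\dots,X_{m+1}$, the pushed-forward vectors $F_*X_i$ are again unit horizontal vectors, and $F_* \mathcal{T}^m(X_1,\dots,X_{m+1}) = \mathcal{T}^m(F_*X_1,\dots,F_*X_{m+1}) \in \Omega^m_p$. Applying the same to $F^{-1}$ gives equality $F_*\Omega^m_p = \Omega^m_p$, so $(F_*)_{|V_p^m} \in \text{St}(\Omega^m_p, V^m_p)$ for every $m$, and hence $(F_*)_{|V_pM} \in \text{St}(\Omega_p)$.

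The only mildly subtle point is the inductive reduction: making sure that at each iteration one is using the part of the torsion that is actually determined by the grading (so that Corollary \ref{C:Fundamental} or Corollary \ref{C:Extend} applies), rather than the full torsion of an ambient metric extension which $F$ need not preserve. Everything else is formal bookkeeping.
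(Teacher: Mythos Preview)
Your argument is correct and is precisely the kind of verification the paper has in mind: the paper states this lemma together with the next one as ``trivial'' and leaves both proofs to the reader, so there is no written proof to compare against. Your inductive unwinding of $\mathcal{T}^m$ via Corollary~\ref{C:Extend} and Lemma~\ref{L:torj} is the natural route, and your caution about only using the grading-determined pieces of the torsion is exactly the right point. One small slip: in the final line of the inductive step you write $T = \pi^{m-1}\Tor[m-2](X_2,\dots,X_{m+1})$, but this should be $\Tor[m-1]$ (a $(m{-}1)+1 = m$-linear map on $X_2,\dots,X_{m+1}$), matching your own reduction two sentences earlier.
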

  
 \begin{lemma}\label{L:1d}
 If   $\dim V^j=1$ for any $j=1,\dots,r$ then $M$ is vertically discrete at step $j+1$ everywhere.
 \end{lemma}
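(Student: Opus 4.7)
The plan is to exploit the one-dimensionality of $V^j_p$ to reduce the problem to a trivial algebraic observation in $\mathbb{R}^*$. Fixing a nonzero basis vector $e \in V^j_p$, any linear automorphism of $V^j_p$ is multiplication by some $c \in \mathbb{R}^*$, so $\text{St}(\Omega^j_p,V^j_p)$ embeds as a subgroup of $\mathbb{R}^*$. Discreteness will follow immediately once I can show that, under this identification, $\Omega^j_p$ is a bounded subset of $\mathbb{R}$ that contains at least one nonzero element.

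Boundedness is immediate: by Lemma \ref{L:torj}, $\mathcal{T}^j$ is a continuous multilinear map, and $\Omega^j_p$ is its image on the compact set $\prod_{i=1}^{j+1}\mathbb{S}(H_pM)$, so $\Omega^j_p$ is compact.

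For nontriviality, I would argue that the assumption $\dim V^j_p = 1 > 0$, together with the regular grading condition $H^j = H^{j-1}\oplus V^j$, forces $\mathcal{T}^j$ to be nonzero at $p$. By the inductive definition of $H^j$, the quotient $V^j_p = H^j_p / H^{j-1}_p$ is spanned by projections $\pi^j[X,Z]_p$ where $X \in \Gamma^\infty(H^0)$ and $Z \in \Gamma^\infty(H^{j-1})$. Writing $Z$ in a frame adapted to $H^{j-1} = H^0 \oplus V^1 \oplus \cdots \oplus V^{j-1}$ and unwinding each $V^k$-component as an iterated bracket of horizontal fields (using the regularity of the grading), and then converting each bracket to torsion via $\pi^l[A,B] = -\pi^l \tor(A,B)$ (using that the canonical connection preserves each $V^k$, so the $\nabla$-pieces in $[A,B] = \nabla_A B - \nabla_B A - \tor(A,B)$ fall into the appropriate lower summands of the grading), one sees that $V^j_p$ is actually spanned by the values $\mathcal{T}^j(X_1,\ldots,X_{j+1})_p$ for horizontal $X_i$. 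Multilinearity then lets me rescale these to unit vectors without losing the nonvanishing, so $\Omega^j_p$ contains a nonzero element.

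With $\Omega^j_p$ identified as a bounded subset of $\mathbb{R}$ with $\sup|\Omega^j_p| > 0$, any $c \in \text{St}(\Omega^j_p, V^j_p)$ must satisfy $|c|\sup|\Omega^j_p| = \sup|\Omega^j_p|$, forcing $|c|=1$. Hence $\text{St}(\Omega^j_p, V^j_p) \subseteq \{\pm \mathrm{id}\}$, a finite and therefore discrete subgroup of $GL(V^j_p)$, which is exactly the claim.

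The main obstacle is the nontriviality step, as this is the only place where the regular grading hypothesis is genuinely used; everything else is formal. For $j=1$ the argument is essentially immediate from the definition of $V^1$ as the bracket-generating addition to $H^0$, but for larger $j$ one must carefully keep track of the iterated brackets and verify that only the purely torsion piece survives the projection $\pi^j$.
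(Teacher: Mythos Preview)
Your argument is correct. The paper supplies no proof here, declaring the lemma (together with Lemma~\ref{L:FR}) trivial and leaving it to the reader. You have correctly isolated the one substantive step: $\Omega^j_p$ must contain a nonzero point, and this is precisely where the regular-grading hypothesis ambient in this subsection enters. Your inductive unwinding---only the $V^{j-1}$-component of $\Tor[j-1]$ survives under $\pi^j\tor(X_1,\cdot)$, and that component spans $V^{j-1}_p$ by the inductive hypothesis---is the natural justification; once $\Omega^j_p$ is a compact subset of a line containing a nonzero element, the containment $\text{St}(\Omega^j_p,V^j_p)\subseteq\{\pm\mathrm{id}\}$ is immediate.
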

 
For manifolds that are vertically discrete at two successive levels, it is possible to greatly reduce the dimension of the vector space upon which $\mB$ acts as  a skew-symmetric operator. 
For $U \in V_pM$ define maps
\[ \mathcal{T}^m_U \colon H_p M \to V^m_p, \qquad \mathcal{T}^m_U = \pi^{m} \tor(U,X). \]

\begin{lemma}\label{L:vdm} Suppose that $K \in \mathscr{K}_p^R$. If $M$ is vertically discrete at steps $m+1$ and $m+2$ at $p$ then for any $U \in V^m_p$ 
\[ \text{range}\left( \mB \right)  \subseteq \ker \left( \mathcal{T}^{m+1}_U \right).\]
\end{lemma}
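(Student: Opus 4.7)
The plan is to exploit the flow of $K$ together with the discreteness hypotheses on the stabilizers. Let $\Phi_t$ be the one-parameter family of regular $H$-isometries generated by $K$; since $K_p=0$, each $\Phi_t$ fixes $p$, and the tangent maps $(\Phi_t)_*$ form a continuous one-parameter family of linear automorphisms of $T_pM$ preserving each $V^j_p$. By Lemma \ref{L:FR}, $(\Phi_t)_*|_{V^j_p}$ lies in $\text{St}(\Omega^j_p, V^j_p)$. The hypothesis then says this stabilizer is discrete for $j=m$ and $j=m+1$, so the continuous curves $t\mapsto (\Phi_t)_*|_{V^m_p}$ and $t\mapsto (\Phi_t)_*|_{V^{m+1}_p}$ must be constantly the identity. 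Since $K_p=0$, the assignment $v\mapsto [K,Y]_p$ for any vector field $Y$ with $Y_p=v$ is well-defined and linear on $T_pM$ and equals $-\frac{d}{dt}\big|_{t=0}(\Phi_t)_*(v)$, so I conclude $[K,Y]_p=0$ whenever $Y_p\in V^m_p$ or $Y_p\in V^{m+1}_p$.

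Next I would fix a section $U$ of $V^m$, a horizontal vector field $X$, and apply Jacobi:
\[ [K,[U,X]] \;=\; [[K,U],X] \,+\, [U,[K,X]]. \]
Because $K$ is regular, $[K,\cdot]$ preserves each $V^j$, so decomposing an arbitrary vector field into its graded components shows that $\pi^{m+1}$ commutes with $[K,\cdot]$; the left-hand side therefore projects to $[K,\pi^{m+1}[U,X]]_p$, which vanishes since $\pi^{m+1}[U,X]_p\in V^{m+1}_p$ and the previous paragraph applies. The first term on the right projects to zero as well: $\pi^{m+1}[Z,X]_p=-\pi^{m+1}\tor(Z,X)_p$ is tensorial in $Z$, and here $Z=[K,U]$ vanishes at $p$. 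What remains is
\[ \pi^{m+1}[U,[K,X]]_p \;=\; 0. \]

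To finish, I would translate this into the language of the lemma. Since $K_p=0$, $\tor(K,X)_p=0$; combined with Lemma \ref{L:EP}(a), which gives $\nabla_X K_V|_p = \tor(X,K)_V|_p = 0$, this yields $[K,X]_p = -\mB(X)_p$. Moreover, for $U\in V^m$ and $Y\in H$, parallelism of the $V^j$'s forces $\nabla_U Y\in V^0$ and $\nabla_Y U\in V^m$, so $\pi^{m+1}\tor(U,Y)=-\pi^{m+1}[U,Y]$. Substituting into the displayed identity yields $\mathcal{T}^{m+1}_{U_p}(\mB(X)_p)=0$ for every $X\in H_pM$ and every $U_p\in V^m_p$, and since Lemma \ref{L:EP}(e) gives $\mB(VM)=0$, the entire range of $\mB$ lies in $\ker\mathcal{T}^{m+1}_{U_p}$, as claimed.

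The main obstacle is the first step, where both discreteness hypotheses do the real work: one must freeze the flow simultaneously on $V^m_p$ (to annihilate $[K,U]_p$) and on $V^{m+1}_p$ (to annihilate $[K,\pi^{m+1}[U,X]]_p$), and it is exactly this simultaneous freezing on consecutive graded pieces that lets the Jacobi identity collapse to the desired torsion-commutation statement.
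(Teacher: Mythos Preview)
Your proof is correct and follows essentially the same route as the paper's: both arguments use the discreteness at steps $m+1$ and $m+2$ to force $[K,\,\cdot\,]_p$ to vanish on $V^m_p$ and $V^{m+1}_p$, then expand $[K,\pi^{m+1}\tor(U,X)]$ (the paper via a Leibniz rule for torsion, you via Jacobi on the underlying brackets) so that only the term $\mathcal{T}^{m+1}_{U}(\mB(X))$ survives. Your version is slightly more explicit in justifying why $\pi^{m+1}$ commutes with $[K,\cdot]$ and why $[K,X]_p=-\mB(X)_p$, but the substance is identical.
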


\begin{proof}
These follow from  very similar computations to that of Lemma \ref{L:vd2}. First since $M$ is vertically discrete at step $m+2$, we see
\begin{align*} 
0 &= [K, \pi^{m+1} \tor(U,X) ]_{|p}   = \pi^{m+1} [ K, \tor(U,X) ]_{|p}  \\
&= \pi^{m+1}  \tor([K,U],X)_{|p} + \pi^{m+1} \tor(U,[K,X])_{|p}
\end{align*}
As  $M$ is vertically discrete at step $m+1$, the first term on the last line vanishes. Hence at $p$, $\mathcal{T}^{m+1}(\mB(X) )=0 $. 
\end{proof}

From this we obtain our main theorem for regular higher step manifolds.
 
 \begin{thm}\label{T:higher}
 Suppose $M$ is vertically discrete at steps $m+1$ and $m+2$ at $p$. If
 \[ L = \bigcap\limits_{U \in V_p^m} \ker \mathcal{T}^{m+1}_U \]
 then at $p$, $L^\bot \subseteq \ker{\mB}$ and $\mB$ restricts to a map $L\to L$.
 \end{thm}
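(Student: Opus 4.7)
The plan is to reduce both assertions to Lemma \ref{L:vdm} combined with the elementary fact that a skew-symmetric endomorphism of an inner product space has kernel equal to the orthogonal complement of its range. The real work has already been done in Lemma \ref{L:vdm}; the theorem is essentially a repackaging.

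First I would fix $K \in \mathscr{K}_p^R$ and apply Lemma \ref{L:vdm} once for each $U \in V_p^m$: the hypothesis that $M$ is vertically discrete at steps $m+1$ and $m+2$ at $p$ is exactly what that lemma requires, so $\operatorname{range}(\mB) \subseteq \ker(\mathcal{T}^{m+1}_U)$ for every such $U$. Intersecting these containments over $U \in V_p^m$ gives $\operatorname{range}(\mB) \subseteq L$. Since $\mB \colon T_pM \to H_pM$ by Definition \ref{D:Bil} and $\mB(V_pM) = 0$ by Lemma \ref{L:EP}(e), we may regard $\mB|_{H_pM}$ as an endomorphism of $H_pM$ whose image lies in $L$. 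In particular $\mB(L) \subseteq \operatorname{range}(\mB) \subseteq L$, which establishes the second assertion.

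For the first assertion, invoke Lemma \ref{L:EP}(f) to conclude that $\mB|_{H_pM}$ is skew-symmetric. For any skew-symmetric endomorphism $A$ of an inner product space, $\ker(A) = (\operatorname{range}(A))^\bot$, because $\langle Ax, y\rangle = -\langle x, Ay\rangle$ shows $y \in \ker A$ iff $y$ is orthogonal to every vector of the form $Ax$. Applied here, together with $\operatorname{range}(\mB) \subseteq L$, this yields $L^\bot \subseteq (\operatorname{range}(\mB))^\bot = \ker(\mB)$, as claimed.

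I do not foresee any genuine obstacle here; the theorem follows as a formal corollary of Lemma \ref{L:vdm}. The one point worth noting is that both halves depend on $K$ being a strong (rather than merely weak) $H$-Killing field, since we use parts (e) and (f) of Lemma \ref{L:EP}; this is automatic from the convention $K \in \mathscr{K}_p^R$ inherited through Lemma \ref{L:vdm}.
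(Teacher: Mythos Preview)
Your proposal is correct and follows essentially the same argument as the paper: both derive $\operatorname{range}(\mB)\subseteq L$ from Lemma~\ref{L:vdm} and then invoke skew-adjointness of $\mB$ on $H_pM$ to conclude $L^\bot\subseteq\ker(\mB)$. You have simply supplied a bit more detail, in particular spelling out the restriction $\mB\colon L\to L$ as a consequence of $\operatorname{range}(\mB)\subseteq L$, which the paper leaves implicit.
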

 
 \begin{proof}  It follows immediately from Lemma \ref{L:vdm} that  $\text{range}(\mB) \subseteq L$ at $p$. Since $\mB$ is skew-adjoint as a map $H_pM \to H_pM$, we have $\ker(\mB) = \text{range}(\mB)^\bot \supseteq L^\bot$. 
  
 \end{proof}
 
 From this we can easily  obtain the following dimension bound.
 
 \begin{cor}\label{C:higher}
 Under the same conditions as Theorem \ref{T:higher}, 
then
 \[ \dim \mathscr{K}_p \leq \frac{1}{2} \dim L (\dim L -1).\]
 \end{cor}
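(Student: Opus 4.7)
The plan is to combine the injectivity result of Theorem \ref{T:Basic} with the structural restriction on $\mathcal{B}_K$ provided by Theorem \ref{T:higher}. Since every element of $\mathscr{K}_p$ vanishes at $p$, the map $\chi_p$ from Theorem \ref{T:Basic} restricts on $\mathscr{K}_p$ to $K \mapsto \bigl( 0, (\mathcal{B}_K)_p \bigr)$. Injectivity of $\chi_p$ then gives
\[
\dim \mathscr{K}_p \;\leq\; \dim\bigl\{ (\mathcal{B}_K)_p : K \in \mathscr{K}_p \bigr\} \;\leq\; \dim \mathcal{S},
\]
where $\mathcal{S}$ is any ambient subspace of $\operatorname{End}(H_pM)$ known to contain every $(\mathcal{B}_K)_p$. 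So the task reduces to identifying the smallest such $\mathcal{S}$ permitted under the hypotheses.

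Next, I would invoke Theorem \ref{T:higher}: under the vertical discreteness hypotheses at steps $m+1$ and $m+2$, any such $\mathcal{B}_K$ annihilates $L^\bot$ and carries $L$ into itself. Together with the skew-symmetry of $\mathcal{B}_K$ on $H_pM$ (Lemma \ref{L:EP}(f)), the restriction $\mathcal{B}_K\big|_L$ is a skew-symmetric endomorphism of $L$, and $\mathcal{B}_K$ is completely determined by this restriction because it vanishes on the orthogonal complement. Thus $\mathcal{S}$ embeds into $\mathfrak{skew}(L)$, which has dimension $\tfrac{1}{2}\dim L\,(\dim L - 1)$. Combining with the inequality above yields the claimed bound.

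There is essentially no obstacle here beyond reading off the two facts: the content sits entirely in Theorem \ref{T:Basic} (which converts the problem into bounding the space of $\mathcal{B}_K$) and Theorem \ref{T:higher} (which pins $\mathcal{B}_K$ down to an element of $\mathfrak{skew}(L)$). The only minor point worth stating cleanly is that skew-symmetry on $H_pM$ automatically restricts to skew-symmetry on the invariant subspace $L$, so that the ambient dimension count on $\mathfrak{skew}(L)$ is the correct one rather than the full $\dim L\cdot \dim L$.
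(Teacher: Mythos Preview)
Your argument is correct and is precisely the approach the paper intends: the paper does not spell out a proof of Corollary~\ref{C:higher}, merely stating it follows ``easily'' from Theorem~\ref{T:higher}, and what you have written is exactly that easy deduction via the injectivity of $\chi_p$ from Theorem~\ref{T:Basic}. The only cosmetic point is that the skew-symmetry of $\mathcal{B}_K$ on $H_pM$ is already recorded in Lemma~\ref{L:EP}(b) (and is built into the codomain $\mathfrak{skew}(H_p)$ of $\chi_p$), so you could cite that instead of part~(f).
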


 \section{Curvature constraints}\label{Neg}
 
 In classical Riemannian geometry, the curvature of the manifold imposes both local and global conditions on the isometries and Killing fields. In this section we shall explore how this theory generalizes to the sub-Riemannian setting. First we shall look at the pointwise relationship between the curvature of the canonical sub-Riemannian connection and the bracket structure of $H$-Killing fields. Next we shall define a sub-Riemannian analogue of the Ricci curvature and use a Bochner type methodology to study how this Ricci curature effects the isometry group. 
 
 Here and in the sequel, we small use $\srm$ to denote the full curvature tensor associated to $\nabla$ and $R$ to denote the associated endomorphism of $TM$. Thus
 \begin{equation}
 \begin{split} R(A,B)C &= \left( \nabla_{A} \nabla_B -\nabla_B \nabla_A - \nabla_{[A,B]}\right)C,\\
 \srm(A,B,C,D) &= \aip{R(A,B)C}{D}{}.
 \end{split} 
 \end{equation}
 
  For sRC-manifolds with a large degree of symmetry, it should be expected that the curvatures are determined by the Lie algebra structure on $\mathscr{K}$ or $\mathscr{K}^*$. For our  results in this direction, we have the following pair of lemmas. The first which can be interpreted as stating that the curvature tensor measures the degree to which $\mathscr{K}^*_B$ fails to be a Lie subalgebra. 

\begin{lemma}\label{L:Bil}
For $K,L \in \mathscr{K}^*$, as operators on $HM$ we have
\[ \mB[{[K,L]}]  = [\mB , \mB[L] ] + \nabla_K \mB[L]  - \nabla_L \mB[K] - R(K,L). \]
\end{lemma}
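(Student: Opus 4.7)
The plan is to establish the identity by direct tensorial expansion on horizontal vector fields. Both sides are $(1,1)$-tensors on $HM$, so it suffices to evaluate them on an arbitrary $X \in \Gamma^\infty(HM)$ and verify equality pointwise.

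The key initial simplification is that for any weak $H$-Killing field $M$ and any horizontal $X$, both $[M, X]$ and $\nabla_M X$ lie in $HM$: the former by the weak $H$-Killing condition, and the latter because $HM$ is parallel under $\nabla$. Hence the defining formula $\mB[M](X) = (\nabla_M X - [M, X])_H$ collapses to $\mB[M](X) = \nabla_M X - [M, X]$ with no projection required. Since $\mathscr{K}^*$ is closed under Lie brackets, this applies with $M = [K, L]$, giving
\[ \mB[{[K,L]}](X) = \nabla_{[K, L]} X - [[K, L], X]. \]

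To expand the LHS, I would substitute the curvature identity $\nabla_{[K, L]} = \nabla_K \nabla_L - \nabla_L \nabla_K - R(K, L)$ together with the Jacobi identity $[[K, L], X] = [K, [L, X]] - [L, [K, X]]$, producing a five-term expression. For the RHS, the same simplified formula applies to the outer operators in $[\mB, \mB[L]](X) = \mB(\mB[L](X)) - \mB[L](\mB(X))$, since $\mB[L](X)$ and $\mB(X)$ are already horizontal; this yields eight terms. The covariant-derivative pieces are then expanded using the Leibniz rule for $(1,1)$-tensors, $(\nabla_K \mB[L])(X) = \nabla_K(\mB[L](X)) - \mB[L](\nabla_K X)$, contributing four further terms apiece.

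On collecting, the mixed contributions of the form $\nabla_K[L, X]$, $\nabla_L[K, X]$, $[K, \nabla_L X]$, and $[L, \nabla_K X]$ appear with opposite signs between the commutator piece and the derivative pieces and cancel in pairs, while the iterated second derivatives and nested brackets combine with the curvature term $-R(K, L) X$ to reproduce the five-term LHS expansion. The proof is purely computational; the only real obstacle is careful sign- and order-bookkeeping across the dozen or so terms that arise, and the enabling observation used throughout is that the weak $H$-Killing hypothesis on $K$, $L$, and $[K, L]$, combined with the parallelism of $HM$ under $\nabla$, allows us to drop every $H$-projection that would otherwise encumber the calculation.
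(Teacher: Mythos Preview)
Your proposal is correct and follows essentially the same route as the paper: both arguments drop the $H$-projection using parallelism of $HM$ and the weak $H$-Killing condition, then unwind $\mB[{[K,L]}](X)$ via the Jacobi identity and the definition of $R(K,L)$, and finally regroup terms using $\mB[M](X)=\nabla_M X-[M,X]$ and the Leibniz rule for $\nabla_K\mB[L]$. The only cosmetic difference is that the paper transforms the left-hand side step by step into the right-hand side, whereas you expand both sides and match; the term-by-term cancellations are identical.
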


\begin{proof} Let $X$ be a  horizontal vector field. Then
\begin{align*}
B_{[K,L]}(X) &= \left( \nabla_{[K,L]}X- [ [ K,L], X] \right)_H \\
&=  \nabla_{[K,L]}(X) - [ [ K,L], X] \\
& = \nabla_{[K,L]}(X) + [ [ L,X], K]  +[[X,K],L]\\
&= \nabla_{[K,L]}(X)  -\nabla_K [L,X] + \nabla_L [K,X] \\
& \qquad + \mB ([L,X])  - \mB[L] ([K,X])\\
&=  \nabla_{[K,L]}(X)  -\nabla_K \nabla_L X + \nabla_K ( \mB[L](X))   \\
& \qquad   -\nabla_L \nabla_K X + \nabla_L ( \mB[K](X))  + \mB ([L,X])  - \mB[L] ([K,X])\\
&= -R(K,L)X  + \nabla_K ( \mB[L](X))  + \nabla_L ( \mB[K](X)) \\
& \qquad   + \mB ([L,X])  - \mB[L] ([K,X]).
\end{align*}
To complete the argument, we note that
\begin{align*}
(\nabla_L \mB ) (X) &= \nabla_L (\mB(X)) - \mB (\nabla_L X) \\
\mB ([L,X]) &= \mB ( \nabla_L X + \mB[L] (X) ) 
\end{align*}
and the required identity follows easily.

\end{proof} 

The weak $H$-Killing fields in  $\mathscr{K}^*_B$ can be thought of as those that generate isometries that have no local rotation component, or are pure translation. From the previous lemma, we see that the commutation of such vector fields imposes a flatness condition on the manifold.

Next we see that if the manifold admits a lot of weak $H$-Killing fields then the curvature can be computed from properties of the operators $\mB$.

\begin{lemma}\label{L:Curv2}
For $K \in \mathscr{K}^*$ and horizontal vector fields $X,Y,Z $, 
\[ \begin{split} \srm (X,K,Y,Z) & = \aip{\nabla \mB (Y,X)}{Z}{} + \frac{1}{2} \mB( \tor(X,Y) ,Z) \\
 & \qquad -  \frac{1}{2}\mB( \tor(X,Z),Y) - \frac{1}{2}  \mB( \tor(Y,Z),X) . \end{split} \]
If $K \in \mathscr{K}$ then $R(X,K)Y = \nabla \mB(Y,X)$.
\end{lemma}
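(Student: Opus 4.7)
The plan is to reduce the identity to the analysis of $\mathcal{L}_K\nabla$ evaluated on horizontal inputs, exploiting three structural facts from the excerpt: $HM$ is parallel under $\nabla$, $[K, HM] \subseteq HM$ for weak $H$-Killing $K$, and $\tor(HM, HM) \cap HM = \{0\}$ by Theorem \ref{T:Connection}. First I would substitute $\nabla_K A = \mB(A) + [K, A]$, which is valid for horizontal $A$ since parallelism makes $\nabla_K A$ horizontal and weak Killing makes $[K, A]$ horizontal, into $R(X, K)Y = \nabla_X \nabla_K Y - \nabla_K \nabla_X Y - \nabla_{[X, K]} Y$, using it with $A = Y$ and $A = \nabla_X Y$. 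After combining the $\mB$-derivative terms into $(\nabla_X \mB)(Y)$, this produces
\[ R(X, K)Y = (\nabla_X \mB)(Y) - \bigl([K, \nabla_X Y] - \nabla_{[K, X]} Y - \nabla_X [K, Y]\bigr), \]
and I denote the bracketed expression $(\mathcal{L}_K \nabla)(X, Y)$.

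Next I would establish two auxiliary identities for weak $K$. (i) For $V \in VM$, expanding $\mB(V) = \nabla_V K_H + \tor(K, V)_H$ via $\tor(K, V) = \nabla_K V - \nabla_V K - [K, V]$ and using that $VM$ is parallel (so $(\nabla_K V)_H = 0$) yields $\mB(V) = -[K, V]_H$. Applying this with $V = \tor(X, Y) \in V^1$, using the Jacobi identity $[K, [X, Y]] = [[K, X], Y] + [X, [K, Y]]$, and invoking the identity $[A, B]_V = -\tor(A, B)$ for horizontal $A, B$ (direct from $\tor(HM, HM) \cap HM = \{0\}$ and $HM$ parallel), one collects terms to obtain
\[ \mB(\tor(X, Y)) = (\mathcal{L}_K\nabla)(Y, X) - (\mathcal{L}_K\nabla)(X, Y). \]
(ii) From metric compatibility of $\nabla$, the weak Killing identity $K\aip{A}{B}{} = \aip{[K, A]}{B}{} + \aip{A}{[K, B]}{}$ for horizontal $A, B$, and the commutator relation $KX f - XK f = [K, X]f$ applied to $f = \aip{Y}{Z}{}$, one deduces the skew-symmetry
\[ \aip{(\mathcal{L}_K \nabla)(X, Y)}{Z}{} + \aip{Y}{(\mathcal{L}_K \nabla)(X, Z)}{} = 0 \]
for horizontal $X, Y, Z$. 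All three resulting vectors are horizontal (by the parallelism and weak Killing facts above), so the inner product is well defined without a metric extension.

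Finally, I would expand the three $\mB(\tor(\cdot, \cdot), \cdot)$ terms on the claimed right-hand side via (i), producing six terms of the form $\aip{(\mathcal{L}_K\nabla)(\cdot, \cdot)}{\cdot}{}$; applying the skew-symmetry (ii) in the last two entries collapses these pairwise to exactly $-\aip{(\mathcal{L}_K\nabla)(X, Y)}{Z}{}$, matching the first paragraph. For the second statement, $K \in \mathscr{K}$ strong implies $\mB(VM) = 0$ by Lemma \ref{L:EP}(5); since each of $\tor(X, Y), \tor(X, Z), \tor(Y, Z)$ lies in $V^1 \subseteq VM$, every torsion term on the right vanishes, leaving $R(X, K)Y = \nabla \mB(Y, X)$. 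The main obstacle is step (i): one must carefully apply Jacobi and the splitting $[A, B] = (\nabla_A B - \nabla_B A) - \tor(A, B)$ to reduce the horizontal projection of the iterated bracket $[K, [X, Y]_V]$ precisely to the two $(\mathcal{L}_K\nabla)$ terms; step (ii) is routine metric-compatibility bookkeeping once one is careful to keep all pairings on horizontal arguments.
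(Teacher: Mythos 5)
Your proposal is correct, but it follows a genuinely different route from the paper's proof. The paper works directly with $\nabla\mB$: it expands $\aip{\nabla \mB(Y,X)}{Z}{}-\aip{\nabla\mB(X,Y)}{Z}{}+\aip{\mB(\tor(X,Y))}{Z}{}$ from the definition of $\mB$, converts the resulting torsion-derivative terms into curvature via the Algebraic Bianchi Identity \eqref{E:abi} (quoted from \cite{Hladky4}), obtaining $\aip{R(X,K)Y-R(Y,K)X}{Z}{}$, and then runs a long chain of substitutions based on the skew-symmetry $\aip{\nabla\mB(Y,X)}{Z}{}=-\aip{\nabla\mB(Z,X)}{Y}{}$ to isolate $2\aip{R(X,K)Y}{Z}{}$. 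You instead organize the argument around $\mathcal{L}_K\nabla$: the relation $R(X,K)Y=\nabla\mB(Y,X)-(\mathcal{L}_K\nabla)(X,Y)$ follows at once from $\nabla_K A=\mB(A)+[K,A]$ on horizontal $A$; your identity (i), proved with the Jacobi identity and $[A,B]_V=-\tor(A,B)$, says the antisymmetrization of $\mathcal{L}_K\nabla$ in its two arguments is exactly $-\mB\circ\tor$; your identity (ii) gives metric skew-symmetry of $\aip{(\mathcal{L}_K\nabla)(X,\cdot)}{\cdot}{}$ (all three facts check out, including $\mB(V)=-[K,V]_H$ for vertical $V$ and the horizontality of every term); and the standard three-term symmetrization then solves for $\aip{(\mathcal{L}_K\nabla)(X,Y)}{Z}{}$ and yields precisely the stated $\tfrac12\mB(\tor(\cdot,\cdot),\cdot)$ combination with the correct signs. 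The strong case is also fine: $\mB(VM)=0$ kills the torsion terms, and since $R(X,K)Y$, $\nabla\mB(Y,X)$ and $(\mathcal{L}_K\nabla)(X,Y)$ are all horizontal, the scalar identity against arbitrary horizontal $Z$ upgrades to the vector identity. What your route buys: it bypasses the Algebraic Bianchi Identity (the Jacobi identity plays its role), it is shorter and more conceptual, mirroring the classical Riemannian fact that $\mathcal{L}_K\nabla=0$ for Killing fields of the Levi-Civita connection, with the torsion terms here measuring exactly the failure of that vanishing; and it gives, as a by-product of independent interest, that for strong $H$-Killing fields $\mathcal{L}_K\nabla$ vanishes on horizontal fields. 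What the paper's route buys is that it stays entirely within the tensors already in play ($\nabla\mB$, $\tor$ and its iterates) and makes explicit how the Bianchi identity mediates the curvature--Killing interaction.
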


\begin{proof}  First we show that  
\begin{align*}
\aip{\nabla \mB  (Y,X)}{Y}{} & =\aip{\nabla_X \mB(Y) }{Y}{} - \mB(\nabla_X Y,Y) \\
 &= X \mB (Y,Y) - \mB(Y,\nabla_X Y)  - \mB (\nabla_X  Y ,Y)\\ 
 &=0.
\end{align*} 
Thus
\begin{equation}\label{E:skew1} \aip{\nabla \mB  (Y,X)}{Z}{}  = - \aip{\nabla \mB  (Z,X)}{Y}{}.\end{equation}
Now
\begin{align*}
&\aip{\nabla \mB  (Y,X)}{Z}{}  -\aip{\nabla \mB  (X,Y)}{Z}{}+  \aip{ \mB(\tor(X,Y))}{Z}{}\\
& \qquad  =  \aip{ \nabla_X \nabla_Y K_H -\nabla_Y \nabla_X K_H }{Z}{}+  \aip{ \mB(\tor(X,Y))}{Z}{} \\
& \qquad  \qquad + \aip{\nabla_X \tor(K,Y)-\nabla_Y \tor(K,X) }{Z}{}   -\aip{ \nabla_{\nabla_X Y -\nabla_Y X} K_H}{Z}{} \\ & \qquad  \qquad  -\aip{ \tor(K,\nabla_XY -\nabla_Y X)}{Z}{}   \\
&\qquad = R(X,Y)K_H - \aip{\nabla_{\tor(X,Y)} K_H}{Z}{}  +  \aip{ \mB(\tor(X,Y))}{Z}{}\\
& \qquad  \qquad + \aip{\nabla_X \tor(K,Y)-\nabla_Y \tor(K,X) }{Z}{}  \\ & \qquad \qquad  -\aip{ \tor(K, \tor(X,Y) +[X,Y] )}{Z}{}   \\
&\qquad = R(X,Y)K_H+ \aip{\nabla_X \tor(K,Y)-\nabla_Y \tor(K,X) }{Z}{}\\
& \qquad \qquad -\aip{\tor(K,[X,Y]) }{Z}{}  \\
&\qquad = R(X,Y)K_H+ \aip{ (\nabla \tor)(K,Y,X)  - (\nabla \tor)(K,X,Y) }{Z}{}  \\
&\qquad  \qquad  +  \aip{ \tor(\nabla_X K,Y)-\tor(\nabla_Y K,X)+ \Tor(K,X,Y) }{Z}{} \\
&\qquad = R(X,Y)K_H- \aip{  \mathcal{C} (\nabla \tor)(K,X,Y) }{Z}{}  \\
&\qquad  \qquad  +  \aip{ \tor(\tor(X,K),Y)-\tor(\tor(Y,K),X)+ \Tor(K,X,Y) }{Z}{} \\
&\qquad = \aip{ R(X,Y)K_H-  \mathcal{C} (\nabla \tor)(K,X,Y) +\mathcal{C}\Tor(K,X,Y) }{Z}{},
\end{align*}
where $\mathcal{C}$ represents the cyclic sum and $\Tor(A,B,C) = \tor(A,\tor(B,C))$. The Algebraic Bianchi Identity (see Lemma 3.4 in \cite{Hladky4}), implies that 
\begin{equation}\label{E:abi}
 \mathcal{C} R(A,B)C = \mathcal{C} (\nabla \tor)(A,B,C) - \mathcal{C} \Tor(A,B,C),
 \end{equation}
and so 
\begin{align*}
&\aip{\nabla \mB  (Y,X)}{Z}{}  -\aip{\nabla \mB  (X,Y)}{Z}{}+  \aip{ \mB(\tor(X,Y))}{Z}{}\\
& \qquad= \aip{ R(X,Y)K_H-  \mathcal{C} R(K,X)Y }{Z}{}\\
&\qquad = \aip{R(X,K)Y-R(Y,K)X }{Z}{}.\\
\end{align*}
Hence 
\begin{align*}
&\aip{ \nabla \mB(Y,X) }{Z}{} = - \aip{\nabla \mB  (Z,X)}{Y}{} \\
& \quad = -\aip{\nabla \mB  (X,Z)}{Y}{}- \aip{R(X,K)Z-R(Z,K)X + \mB \tor(X,Z) }{Y}{}\\
& \quad= \aip{\nabla \mB (Y,Z)}{X}{} - \aip{R(X,K)Z-R(Z,K)X }{Y}{} \\
& \quad= \aip{\nabla \mB (Z,Y)}{X}{}  - \aip{R(X,K)Z-R(Z,K)X + \mB \tor(X,Z)}{Y}{}  \\ & \qquad \quad +   \aip{ R(Z,K)Y-R(Y,K)Z  - \mB(\tor(Z,Y)}{X}{} \\ 
& \quad= -\aip{\nabla \mB(X,Y)}{Z}{}  - \aip{R(X,K)Z-R(Z,K)X + \mB \tor(X,Z) }{Y}{} \\
& \quad \qquad + \aip{R(Z,K)Y-R(Y,K)Z  - \mB \tor(Z,Y)}{X}{} \\
& \quad= -\aip{\nabla \mB(Y,X) }{Z}{} - \aip{R(X,K)Z-R(Z,K)X + \mB \tor(X,Z) }{Y}{}  \\
 & \quad\qquad +   \aip{R(Z,K)Y-R(Y,K)Z - \mB \tor(Z,Y) }{X}{}  \\
 & \quad \qquad -\aip{ R(Y,K)X-R(X,K)Y +\mB \tor(Y,X) }{Z}{}\\
 & \quad= -\aip{\nabla \mB(Y,X) }{Z}{} + 2\aip{R(X,K)Y}{Z}{} \\
 & \quad \qquad  +   \mB(\tor(X,Z),Y)+   \mB(\tor(Y,Z),X)+  \mB(\tor(Y,X),Z).
\end{align*}
The result immediately follows 

\end{proof}

A corollary to this last lemma will become useful later.

\begin{cor}\label{C:Ric1} For $K \in  \mathscr{K}$,
\[ \aip{ \text{tr }\nabla \mB }{Z}{} =  - \sum\limits_i  \srm(E_i,K,Z,E_i)  \]
where $E_i$ is any orthonormal frame for $HM$. 
\end{cor}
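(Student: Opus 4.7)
The plan is to read this as a direct algebraic consequence of the second half of Lemma \ref{L:Curv2}, which asserts $R(X,K)Y = \nabla \mB(Y,X)$ whenever $K \in \mathscr{K}$. The key interpretive point is that, as evident from the derivation of equation \eqref{E:skew1} in the preceding proof, $\nabla \mB(Y,X)$ denotes $(\nabla_X \mB)(Y) = \nabla_X(\mB(Y)) - \mB(\nabla_X Y)$, i.e.\ the second slot is the differentiation direction. Under this convention, $\operatorname{tr} \nabla \mB$ is naturally defined as $\sum_i (\nabla_{E_i} \mB)(E_i) = \sum_i \nabla \mB(E_i, E_i)$, and it is routine to check using an orthogonal change of frame that this sum is independent of the choice of orthonormal frame $\{E_i\}$ for $HM$.

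The first step is to set $X = Y = E_i$ in Lemma \ref{L:Curv2}'s second identity, giving $R(E_i, K) E_i = \nabla \mB(E_i, E_i)$. Summing over $i$ and pairing with $Z$ yields
\[ \aip{\operatorname{tr} \nabla \mB}{Z}{} = \sum_i \aip{R(E_i,K)E_i}{Z}{} = \sum_i \srm(E_i, K, E_i, Z). \]

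The second step is to invoke the skew-symmetry of $\srm$ in its last two arguments. This skew-symmetry follows directly from metric compatibility of $\nabla$ (Theorem \ref{T:Connection}): since $\nabla$ preserves $\aip{}{}{}$, the operator $R(A,B)$ is skew-adjoint on $TM$, so $\srm(A,B,C,D) = -\srm(A,B,D,C)$. Applying this to each summand gives $\srm(E_i, K, E_i, Z) = -\srm(E_i, K, Z, E_i)$, which produces the claimed formula.

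There is no real obstacle here — the only subtlety is bookkeeping the convention for $\nabla \mB$ and noting that the formula involves the curvature only through the $HM$-valued endomorphism $R(\cdot,K)$, so pairing against $Z$ (which may be taken horizontal, since $\mB$ takes values in $HM$) produces a well-defined horizontal covector. The result is therefore essentially a one-line corollary of Lemma \ref{L:Curv2}.
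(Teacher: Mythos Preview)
Your proof is correct and follows exactly the route the paper intends: the corollary is stated without proof precisely because it is the immediate trace of the identity $R(X,K)Y = \nabla \mB(Y,X)$ from Lemma~\ref{L:Curv2}, combined with the skew-symmetry of $\srm$ in its last two arguments coming from metric compatibility. Your reading of the convention $\nabla \mB(Y,X) = (\nabla_X \mB)(Y)$ and of $\operatorname{tr}\nabla\mB = \sum_i (\nabla_{E_i}\mB)(E_i)$ is also the one used later in the proof of Theorem~\ref{T:gradalt}.
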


%\begin{cor}\label{C:Vflat}
%If $K,L \in \mathscr{K}$ then for all $ T \in VM$
%\[ R(K,L)T  \equiv 0. \]
%\end{cor}

%A second curvature result will be key to later computations with the Ricci curvature on compact manifolds.

\subsection{Compact sRC-manifolds and Ricci Curvature}

In this sub-section, we study $H$-Killing fields on compact sRC-manifolds and look at the relation with a sRC analogue of the Ricci curvature. Throughout this section we shall assume that $M$ is a compact, oriented sRC-manifold equipped with the basic grading.

At first glance, the most natural generalization of the Ricci curvature to sRC-manifolds would appear to be
\[ \text{tr }\srm(A,B) = \sum\limits_k \srm(E_k,A,B,E_k) \]
where $E_k$ is any orthonormal frame for $HM$. However, this tensor is not in general symmetric  even when restricted to horizontal vectors.
\begin{defn}\label{D:sRicci}
The {\em sub-Ricci curvatures} of an sRC-manifold $M$ are the tensors
\begin{align*}
 \src(A,B) &= \text{tr }\srm(A,B) -  \frac{1}{2}  \sum\limits_k  \aip{\Tor(E_k,A_H,B_H)}{E_k}{}\\
 & \qquad -  \aip{\text{tr }\Tor(A_H)}{B_H}{} \\
 & \qquad -\sum\limits_k  \aip{ (\nabla \tor  - \Tor )(E_k,A_V,B_H) }{E_k}{} \\
 %& \qquad + \sum\limits_k \aip{\Tor(A_V,B_H,E_k)}{E_k}{}  \\
& \qquad + \aip{\text{tr}(\nabla \tor - \Tor) (A_V)}{B_H}{}
\end{align*}
where $\{E_k\}$ is any horizontal orthonormal frame. 
\end{defn}

\begin{lemma}\label{L:asym}
The sub-Ricci curvature $\src$ is symmetric and satisfies
\[ \src (HM,VM)=\src(VM,HM) =\src(VM,VM) = 0 .\]
The sub-Ricci curvature is also independent of the choice of metric extension.
\end{lemma}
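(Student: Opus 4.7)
My plan is to verify the three assertions---vanishing with a vertical second argument, vanishing with a vertical first argument, and full symmetry---in that order, leveraging the parallelism of $HM$ and $VM$ under $\nabla$ together with the Algebraic Bianchi Identity \eqref{E:abi}. Independence of the extension will then follow from a direct inspection of each summand against the last sentence of Theorem \ref{T:Connection}.

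For the first statement, the starting observation is that $R$ preserves the decomposition $TM = HM \oplus VM$ because each $V^j$ (and hence $HM$) is parallel. Consequently, for any $B_V \in VM$, we have $R(E_k,A)B_V \in VM$, so $\text{tr}\,\srm(A,B_V) = \sum_k \aip{R(E_k,A)B_V}{E_k}{} = 0$. Every other term in the definition of $\src$ contains an explicit $B_H$ factor (or takes $B_H$ as an input into $\Tor$ or $\nabla\tor$), so all of them vanish as well. This shows $\src(A,B_V)=0$.

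For $\src(A_V,B_H)=0$, the $\Tor(E_k,A_H,B_H)$ and $\text{tr}\,\Tor(A_H)$ terms drop out, and I need to check that the remaining curvature term is exactly cancelled by the two vertical-correction terms. Apply \eqref{E:abi} to the triple $(E_k,A_V,B_H)$ and take $\aip{\cdot}{E_k}{}$ summed over $k$. The cyclic term $\aip{R(A_V,B_H)E_k}{E_k}{}$ vanishes by antisymmetry of $R$ in its first two slots, and the cyclic term $\aip{R(B_H,E_k)A_V}{E_k}{}$ vanishes because $R$ preserves $VM$. What survives on the curvature side is precisely $\text{tr}\,\srm(A_V,B_H)$, equated to the contracted cyclic sum of $(\nabla\tor - \Tor)$. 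Grouping the cyclic terms by which slot carries $E_k$ rewrites this sum as $\sum_k \aip{(\nabla\tor-\Tor)(E_k,A_V,B_H)}{E_k}{} - \aip{\text{tr}(\nabla\tor-\Tor)(A_V)}{B_H}{}$, which is exactly the combination subtracted in the definition of $\src$. The symmetry statement on horizontal pairs proceeds by the same contract-and-Bianchi maneuver applied to $(E_k, A_H, B_H)$: this produces $\text{tr}\,\srm(A_H,B_H) - \text{tr}\,\srm(B_H,A_H) = \sum_k \aip{\mathcal{C}(\nabla\tor-\Tor)(E_k,A_H,B_H)}{E_k}{}$, and the antisymmetric parts of the $\tfrac12 \Tor$ and $\text{tr}\,\Tor$ correction terms are designed so that their swap under $A_H \leftrightarrow B_H$ exactly absorbs the right-hand side. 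Combined with the vanishing on mixed arguments, this gives symmetry in general.

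Finally, for independence of the extension, invoke the last sentence of Theorem \ref{T:Connection}: $\nabla X$, $\tor(X,Y)$ and $\pi^0\tor(X,T)$ are all invariant for $X,Y$ horizontal and $T$ vertical. Each of $R(E_k, A)B_H$ and its inner product with $E_k$ unpacks, via $R(E_k,A) = \nabla_{E_k}\nabla_A - \nabla_A\nabla_{E_k} - \nabla_{[E_k,A]}$, into expressions of $\nabla$ applied to horizontal fields followed by extraction of a horizontal component---all invariant quantities. The remaining summands are built from $\tor(\cdot,\cdot)$ on horizontal pairs and from $\pi^0\tor(E_k,\cdot)$ with vertical second input, both invariant; and $\nabla\tor$ contracted against horizontal frames inherits invariance from the invariance of $\tor$ on horizontal pairs and of $\nabla$ on horizontal fields. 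The main obstacle here is bookkeeping in the symmetric and mixed-vertical cases: one must correctly identify the three pieces of the cyclic Bianchi sum with $\text{tr}\,\Tor$, $\text{tr}(\nabla\tor-\Tor)$, and the single-contracted pieces, and verify the factor $\tfrac12$ on the $\Tor$ term; this is routine but tedious, and is where the specific form of the definition of $\src$ is forced.
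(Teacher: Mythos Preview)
Your proposal is correct and follows essentially the same strategy as the paper: both arguments rest on contracting the Algebraic Bianchi Identity \eqref{E:abi} against a horizontal orthonormal frame and matching the resulting $\Tor$ and $\nabla\tor$ terms to the correction terms in the definition of $\src$. The only organizational difference is that the paper first derives the four-index pair-swap identity $\aip{R(Z,X)Y}{W}{}-\aip{R(W,Y)X}{Z}{}=\text{(torsion terms)}$ and then specializes with $Z=W=E_k$, whereas you contract the three-index Bianchi identity directly; these are equivalent maneuvers. You are also more explicit than the paper about the trivial vanishing when the second argument is vertical and about independence of the metric extension, neither of which the paper's proof addresses in detail.
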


\begin{proof}
We first show symmetry on $HM$. Note that if $A,B$ are horizontal, then the last three terms vanish. Now it follows from the Algebraic Bianchi Identity \eqref{E:abi} that
\[ \mathcal{C} R(X,Y)Z = \mathcal{C} \nabla \tor(X,Y,Z)  -\mathcal{C} \Tor(X,Y,Z)\]
and hence from elementary properties of curvature that
\begin{align*} 2 \aip{ R(Z,X)Y}{W}{} & - 2\aip{R(W,Y)X}{Z}{} \\
& = \mathcal{C} \aip{ \mathcal{C} \nabla \tor(X,Y,Z)  -\mathcal{C} \Tor(X,Y,Z)}{W}{} 
\end{align*}
If $X,Y,Z,W$ are all horizontal, then the $\nabla \tor$ terms all vanish. Thus using elementary properties of $\Tor$, yields
\begin{align*}  \aip{ R(Z,X)Y}{W}{} & - \aip{R(W,Y)X}{Z}{} =\mathcal{C} \aip{\Tor(X,Y,Z)}{W}{}  
\end{align*}
If $Z=W$, then 
\begin{align*}  \aip{ R(Z,X)Y}{Z}{} & - \aip{R(Z,Y)X}{Z}{} = \aip{ \Tor(Z,X,Y)}{Z}{}\\
& \qquad + \aip{\Tor(Z,Z,X)}{Y}{}  - \aip{\Tor(Z,Z,Y)}{X}{}  
\end{align*}
Letting $Z$ run over an orthonormal frame for $HM$ thus produces the desired symmetry result.

The non-trivial part remaining is to show that $\src(T,X) =0$ for all $T \in VM$ and $X \in HM$, but  this follows from a similar argument again using the horizonal Bianchi identities. 

\end{proof}

%\begin{cor}\label{C:S}
%For all $t \in \mathbb{R}^{}$, the tensors 
%\begin{equation}\label{E:S} \begin{split}
%\mathcal{S}_{t} (X,Y) &=  \src(X,Y) - \frac{1}{2} \aip{\Tor(E_k,X,Y)}{E_k}{}\\
 %& \qquad -t \aip{\text{tr }\Tor(X)}{Y}{} +(1-t)\aip{\text{tr }\Tor(Y)}{X}{}\\
 %\end{split}
%\end{equation}
%are symmetric on $HM$.
%\end{cor}

If $M$ is $H$-normal then the sub-Ricci curvatures take on  a much more familiar form,

\begin{cor}\label{C:sym}
If $M$ is $H$-normal with $VM$ integrable, then 
\[ \src(A,B) = \sum\limits_k \srm(E_k,A_H,B_H,E_k)\]
and hence the latter is symmetric.
\end{cor}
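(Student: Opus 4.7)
The plan is to exploit Lemma~\ref{L:asym} to reduce to horizontal arguments, and then observe that the torsion corrections in the definition of $\src$ collapse in the $H$-normal, $VM$-integrable setting. By Lemma~\ref{L:asym}, $\src$ is symmetric and vanishes whenever either argument is vertical, so expanding bilinearly in $A = A_H + A_V$ and $B = B_H + B_V$ gives $\src(A,B) = \src(A_H,B_H)$. This means I may restrict attention to the case $A_V = 0 = B_V$, where the third and fourth summands in the definition drop out identically and the leftover is
\[
\src(A_H,B_H) = \sum_k \srm(E_k,A_H,B_H,E_k) - \tfrac{1}{2}\sum_k \aip{\Tor(E_k,A_H,B_H)}{E_k}{} - \aip{\text{tr}\,\Tor(A_H)}{B_H}{}.
\]

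It therefore suffices to show the two remaining horizontal correction terms vanish. This is where the hypotheses enter. Applied to the basic grading (so $V^0 = HM$), the third bullet of Theorem~\ref{T:Connection} gives $\tor(HM,HM) \subseteq VM$. Combining this with $H$-normality $\tor(HM,VM) \subseteq VM$, I obtain $\Tor(E_k,A_H,B_H) = \tor(E_k, \tor(A_H,B_H)) \in VM$ and likewise $\tor(E_k,\tor(A_H,E_k)) \in VM$. Pairing against the horizontal vectors $E_k$ and $B_H$ then kills each correction. The $VM$-integrability assumption contributes through the same third connection axiom (now applied with $j=1$): integrability gives $[VM,VM] \subseteq VM$, which with $\tor(VM,VM)\cap VM = \{0\}$ forces $\tor(VM,VM) = 0$, ensuring no extraneous vertical torsion reappears and making the reduction globally consistent.

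The asserted symmetry of $\sum_k \srm(E_k,A_H,B_H,E_k)$ then follows immediately from the symmetry of $\src$ proved in Lemma~\ref{L:asym}. There is no real obstacle here: the argument is essentially definitional bookkeeping, confirming that the correction terms in Definition~\ref{D:sRicci} were engineered to vanish in precisely this regime, with Lemma~\ref{L:asym} doing the work of absorbing the $A_V, B_V$ dependence.
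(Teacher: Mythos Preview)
Your argument is correct and is exactly the route the paper has in mind: reduce to horizontal arguments via Lemma~\ref{L:asym}, then kill the two remaining torsion corrections using $\tor(HM,HM)\subseteq VM$ together with $H$-normality, so that $\Tor(E_k,A_H,B_H)$ and $\text{tr}\,\Tor(A_H)$ are vertical and pair to zero against horizontal vectors.

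One remark: your paragraph invoking $VM$-integrability to force $\tor(VM,VM)=0$ is correct as a side fact but plays no role in your proof. None of the terms you are controlling involve $\tor(VM,VM)$, so the sentence ``ensuring no extraneous vertical torsion reappears and making the reduction globally consistent'' is decorative rather than load-bearing. The integrability hypothesis is carried along here because it is a standing assumption in the surrounding results (Corollary~\ref{C:Ric}, Theorem~\ref{T:gradalt}) where it genuinely matters; for this particular corollary, $H$-normality alone does the work. It would be cleaner either to drop the integrability remark or to say explicitly that the hypothesis is not used in this step.
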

%\begin{lemma}\label{L:Rsym}
%\[ \src(A,B) = \src(B,A) \]
%\end{lemma}

%\begin{proof}
 %\begin{align*}
 %\C \aip{ \C R(X,Y)Z }{ W}{}  = - \C \aip{\srt(X,Y,Z)}{W}{} + \C \aip{ \C (\nabla \tor)(X,Y,Z)}{W}{}
 %\end{align*}
 %When we set $X=
%\end{proof}

One of our purpose in introducing the sub-Ricci curvatures is to use Bochner type results to study the relationship between curvature and symmetry on sub-Riemannian manifolds. To use this theory, we shall need a geometrically defined subelliptic Laplacian. %Unfortunately, as seen in \cite{Hladky4}, this requires a mild condition on the vertical complement.

\begin{defn}\label{D:HL}
For a tensor $\tau$, the {\em horizontal gradient} of $\tau$ is defined by
\[ \nabla_H \tau = \nabla_{E_i} \tau \otimes E_i, \]
the {\em horizontal Hessian} of $\tau$ by
\[ \nabla^2  \tau (B,A)=\left(  \nabla_{A} \nabla_{B} - \nabla_{\nabla_{A} B} \right)  \tau\]
for $X,Y \in HM$ and zero otherwise.
Finally, the {\em horizontal Laplacian} of $\tau$ is defined by
\[ \lp \tau = \text{tr }\left( \nabla^2  \tau \right)  =\left(  \nabla_{E_i} \nabla_{E_i} - \nabla_{\nabla_{E_i}E_i} \right)  \tau  \]
\end{defn}

The Laplacian on a Riemannian manifold has a rich and interesting $L^2$-theory. To replicate this for sRC-manifolds, it is necessary to choose a metric extension. This metric extension then yields a volume form and we have meaningful $L^2$-adjoints. Unfortunately, the horizontal Laplacian defined here, does not always behave as nicely as the Riemannian operator. However, if we make the mild assumption that $M$ has a vertically rigid metric extension, then it is shown in \cite{Hladky4} that $\lp$ is formally self-adjoint and on functions $\lp = - \nabla_H^* \nabla_H$.

The vertically rigid requirement comes from the observation (see \cite{Hladky4}) that for a vector field $A$,
\begin{equation} \text{div }A = \text{tr }\left( \nabla A \right)+  \mathfrak{R}(A)=  \text{tr }\left( \nabla A \right)+  \aip{\widehat{\mathfrak{R}}}{A}{}.\end{equation}
Without assuming vertical rigidity, many integration-by-parts results will include terms involving the rigidity vector that are hard to analyze. 

With this in place Corollary \ref{C:Ric1} can be re-interpreted and improved on for the $H$-normal case.

\begin{cor}\label{C:Ric} If  $M$ is $H$-normal and $VM$ is integrable, then for $K \in  \mathscr{K}$,
\[ \aip{ \lp K_H}{ Z}{} =  - \src(K,Z).  \]
\end{cor}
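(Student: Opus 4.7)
The plan is to bridge Corollary \ref{C:Ric1}, which gives $\aip{\text{tr }\nabla\mB}{Z}{} = -\sum_i \srm(E_i,K,Z,E_i)$, with Definition \ref{D:sRicci} of $\src$, by expanding $\text{tr }\nabla\mB$ explicitly in terms of $\lp K_H$ plus torsion corrections, and then using the $H$-normal and $VM$-integrable hypotheses to cancel everything except $-\src(K,Z)$.

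First I would expand $(\nabla_X\mB)(Y) = \nabla_X\mB(Y) - \mB(\nabla_X Y)$ using the identity $\mB(Y) = \nabla_Y K_H + \tor(K,Y)_H$ from Definition \ref{D:Bil}. Summing over an orthonormal horizontal frame $\{E_i\}$, the leading piece assembles into $\lp K_H$ via Definition \ref{D:HL}, and one picks up correction terms of the form $\sum_i [(\nabla_{E_i}\tor)(K,E_i)]_H + \sum_i[\tor(\nabla_{E_i}K,E_i)]_H$, using that $HM$ is parallel (Theorem \ref{T:Connection}).

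Next I would simplify these correction terms using $H$-normality. Since $\tor(HM,VM)\subseteq VM$ and both $HM$ and $VM$ are parallel, one checks that every contribution of $K_V$ to $[(\nabla_{E_i}\tor)(K,E_i)]_H$ and $[\tor(\nabla_{E_i}K,E_i)]_H$ lies in $VM$ and hence drops out when paired with a horizontal $Z$. So the surviving correction terms involve only $K_H$, and by Corollary \ref{C:Ric1} we have
\[
\aip{\lp K_H}{Z}{} = -\sum_i \srm(E_i,K,Z,E_i) - \sum_i \aip{(\nabla_{E_i}\tor)(K_H,E_i) + \tor(\nabla_{E_i}K_H,E_i)}{Z}{}.
\]
By Corollary \ref{C:sym}, the $H$-normal plus $VM$-integrable hypotheses give $\src(K,Z) = \sum_i \srm(E_i,K_H,Z,E_i)$, so the curvature piece splits as $-\src(K,Z) - \sum_i \srm(E_i, K_V, Z, E_i)$.

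The hardest step is then to show that the residual $K_V$ curvature term cancels with the surviving torsion corrections. Here I would invoke the Algebraic Bianchi Identity \eqref{E:abi} with $A=K_V$, $B=E_i$, $C=Z$, summed over $i$, to rewrite $\sum_i\srm(E_i,K_V,Z,E_i)$ as a combination of $\nabla\tor$ and $\Tor$ terms; $H$-normality forces most of these to vanish and the remainder to coincide with the $\tor(\nabla_{E_i} K_H,E_i)$ and $(\nabla_{E_i}\tor)(K_H,E_i)$ corrections after using skew-symmetry of $\tor$ and the identity $\nabla_{E_i}K_H = \mB(E_i) - \tor(K,E_i)_H$ combined with $\mB(VM)=0$ from Lemma \ref{L:EP}. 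The primary obstacle is bookkeeping the cyclic Bianchi terms carefully enough to confirm the matching; the $VM$-integrable assumption is essential because it is exactly what makes Corollary \ref{C:sym} applicable and keeps the vertical Bianchi remainders confined to $VM$.
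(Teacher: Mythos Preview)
Your overall plan---bridging Corollary~\ref{C:Ric1} with Definition~\ref{D:sRicci}---is exactly what the paper intends, but you have missed a basic simplification that makes the argument one line rather than several paragraphs.  The point you overlook is that the connection of Theorem~\ref{T:Connection} satisfies $\tor(HM,HM)\cap HM=\{0\}$, i.e.\ $\tor(HM,HM)\subseteq VM$.  Combined with $H$-normality ($\tor(HM,VM)\subseteq VM$), this gives $\tor(K,E_i)_H=0$ outright, with \emph{no} surviving $K_H$ piece.  Hence for $K\in\mathscr{K}$ and $H$-normal $M$ one has $\mB(X)=\nabla_X K_H$ on $HM$, and since $\mB(VM)=0$ by Lemma~\ref{L:EP}(e) it follows immediately that $\text{tr}\,\nabla\mB=\lp K_H$.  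Your ``surviving correction terms'' $\sum_i\aip{(\nabla_{E_i}\tor)(K_H,E_i)+\tor(\nabla_{E_i}K_H,E_i)}{Z}{}$ are in fact identically zero for horizontal $Z$, because every torsion appearing there lies in $VM$.

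Consequently the ``hardest step'' you describe---cancelling $\sum_i\srm(E_i,K_V,Z,E_i)$ against those $K_H$ torsion corrections---is not a cancellation at all: both quantities vanish separately.  The vertical curvature trace vanishes for the same reason the extra terms in Definition~\ref{D:sRicci} do under $H$-normality (every $\nabla\tor$ and $\Tor$ term in the Bianchi expansion lands in $VM$ and pairs to zero with $E_i$), which is precisely the content behind Corollary~\ref{C:sym}.  So rather than invoking Bianchi to engineer a matching, you should simply observe that under the hypotheses $\src(K,Z)=\text{tr}\,\srm(K,Z)$ directly from the definition, and then Corollary~\ref{C:Ric1} gives the result.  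Your proposal would ultimately succeed, but only because you would eventually discover that both sides of your ``cancellation'' are zero; the narrative as written suggests a mechanism that is not really there.
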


From this we can trivially obtain the following.

\begin{cor}\label{C:Par}
If $M$ is $H$-normal, $VM$ is integrable and $A$ is any parallel vector field, then for all $K \in \mathscr{K}$, the inner product $\aip{K_H}{A}{}$ is $H$-harmonic. 
\end{cor}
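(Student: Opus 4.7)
The plan is to reduce $\lp \aip{K_H}{A}{}$ to $\aip{\lp K_H}{A_H}{}$ via metric compatibility, then apply Corollary \ref{C:Ric} and exploit that $\nabla A = 0$ forces the relevant curvature trace to vanish.

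First I would observe that, since the subbundles $V^j$ (and in particular $HM$ and $VM$) are parallel for $\nabla$ by Theorem \ref{T:Connection}, the projections commute with $\nabla$. Consequently $\nabla A = 0$ implies $\nabla A_H = 0$ and $\nabla A_V = 0$, so $A_H$ is itself a parallel horizontal vector field. Because the metric extension makes the grading orthogonal, $\aip{K_H}{A}{} = \aip{K_H}{A_H}{}$.

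Next I would compute $\lp f$ where $f = \aip{K_H}{A_H}{}$ directly from Definition \ref{D:HL}. Using metric compatibility of $\nabla$ together with $\nabla A_H = 0$, we have $E_i f = \aip{\nabla_{E_i} K_H}{A_H}{}$ for any local horizontal orthonormal frame $\{E_i\}$, and iterating one more derivative gives $E_j E_i f = \aip{\nabla_{E_j}\nabla_{E_i} K_H}{A_H}{}$ and $(\nabla_{E_i} E_i) f = \aip{\nabla_{\nabla_{E_i} E_i} K_H}{A_H}{}$. Summing produces $\lp f = \aip{\lp K_H}{A_H}{}$, which by Corollary \ref{C:Ric} equals $-\src(K, A_H)$.

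It remains to show $\src(K, A_H) = 0$. By Lemma \ref{L:asym}, $\src(K, A_H) = \src(K_H, A_H)$, and under $H$-normality with $VM$ integrable, Corollary \ref{C:sym} rewrites this as $\sum_k \srm(E_k, K_H, A_H, E_k) = \sum_k \aip{R(E_k, K_H) A_H}{E_k}{}$. But since $A_H$ is parallel, $R(X, Y) A_H = \nabla_X \nabla_Y A_H - \nabla_Y \nabla_X A_H - \nabla_{[X, Y]} A_H = 0$ for all $X, Y$, so each summand vanishes. The only conceptual subtlety is the first step — that parallelism is inherited by the horizontal component — but this is a direct consequence of the parallelism of the grading built into the canonical connection. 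Everything else is a routine calculation.
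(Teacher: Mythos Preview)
Your proof is correct and matches the paper's approach: the paper simply says the result follows trivially from Corollary \ref{C:Ric}, and what you have written is exactly the natural way to unpack that---pass the Laplacian through the inner product using $\nabla A_H=0$, invoke Corollary \ref{C:Ric}, and then kill $\src(K,A_H)$ via Corollary \ref{C:sym} and the observation that $R(\cdot,\cdot)A_H=0$ for parallel $A_H$.
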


We can now begin the technical task of studying the relationship between the horizontal Laplacian, the sub-Ricci curvatures and $H$-Killing fields. The begin with the following technical lemma.
\begin{lemma}\label{L:KillingMix}
For $K \in \mathscr{K}$,
\[ \text{tr }\srm(K_V,K_H) = \aip{ \text{tr}(\nabla \tor-\Tor)(K_V) }{K_H}{}
 - K_H \aip{ \widehat{\mathfrak{R}}}{K_V}{}\]
\end{lemma}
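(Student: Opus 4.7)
The plan is to exploit the symmetry of $\src$ established in Lemma~\ref{L:asym}, which forces $\src(K_V,K_H)=0$, together with a direct computation of the horizontal derivative of the rigidity function. Unpacking the definition of $\src$ at $(A,B)=(K_V,K_H)$, the purely-horizontal terms vanish because $A_H=0$ and $B_V=0$, producing the reduction
\[
\text{tr}\,\srm(K_V,K_H) \;=\; \sum_k \aip{(\nabla\tor-\Tor)(E_k,K_V,K_H)}{E_k}{} - \aip{\text{tr}(\nabla\tor-\Tor)(K_V)}{K_H}{}.
\]
Writing $T_1$ and $T_2$ for the two trace expressions on the right, the claim is then equivalent to the pointwise torsion identity
\[
T_1 + K_H\aip{\widehat{\mathfrak{R}}}{K_V}{} \;=\; 2T_2.
\]

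To establish this I would compute $K_H\aip{\widehat{\mathfrak{R}}}{K_V}{} = K_H\mathfrak{R}(K_V)$ directly at a fixed point $p$ using an orthonormal frame $\{E_i\}\cup\{U_l\}$ chosen parallel at $p$. All frame-derivative terms vanish at $p$, leaving only
\[
K_H\mathfrak{R}(K_V) \;=\; \sum_i \aip{(\nabla_{K_H}\tor)(E_i,K_V)}{E_i}{} + \sum_l \aip{(\nabla_{K_H}\tor)(U_l,K_V)}{U_l}{} + \mathfrak{R}(\nabla_{K_H}K_V),
\]
where the last piece is handled by the Killing identity $\nabla_{K_H}K_V=\tor(K_H,K_V)_V$ from Lemma~\ref{L:EP}(a). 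The remaining $(\nabla_{K_H}\tor)$ terms are then converted using the Algebraic Bianchi identity~\eqref{E:abi}, applied to the triples $(K_H,E_i,K_V)$ and $(K_H,U_l,K_V)$: this rewrites each such term as a sum of curvature contributions, of $(\nabla_{E_i}\tor)$ or $(\nabla_{U_l}\tor)$ contributions (which feed into $T_2$), of $(\nabla_{K_V}\tor)$ contributions (which can be controlled via Lemma~\ref{L:EP}(d) for vertical directions), and of cyclic $\Tor$ terms. The curvature pieces reassemble into $\text{tr}\,\srm(K_V,K_H)$, feeding back into the first step, while the remaining derivative and $\Tor$ pieces conspire to yield the desired combination $2T_2-T_1$.

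The principal obstacle is the bookkeeping of these many pieces. The coefficient $2$ in front of $T_2$ arises because both the horizontal trace $\sum_i(\nabla_{K_H}\tor)(E_i,K_V)$ after Bianchi substitution and the correction term $\mathfrak{R}(\tor(K_H,K_V)_V)$ each contribute a copy of $T_2$; verifying the precise cancellations of the cyclic $\Tor$ pieces against the $\Tor$ terms embedded in $T_1$ and $T_2$ (and of the $(\nabla_{K_V}\tor)$ pieces produced by Bianchi against those carried in through the Killing identity) is the technical heart of the argument. Throughout one must maintain consistent sign conventions and cyclic orderings, and it is helpful to use the parallel frames together with the skew-symmetry of $\tor$ to collapse pairs of terms whenever possible.
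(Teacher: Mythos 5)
Your opening moves coincide with the paper's: you invoke Lemma \ref{L:asym} to kill $\src(K_V,K_H)$, you differentiate $\aip{\widehat{\mathfrak{R}}}{K_V}{}=\mathfrak{R}(K_V)$ along $K_H$ in a frame adapted at the point, and you convert $\nabla_{K_H}K_V$ into torsion by the Killing identity. That is exactly the computational core of the paper's argument (the paper phrases the frame bookkeeping through the cancellation $\sum_k\aip{\tor(\nabla_{K_H}E_k,K_V)}{E_k}{}=0$, equation \eqref{E:skew}, which follows from the symmetry property $g(\tor(Z,X),Y)=g(\tor(Z,Y),X)$ of the canonical connection, rather than through a parallel frame, but these are the same device). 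The point at which your proposal stops being a proof is the closing step: the paper needs no Bianchi identity at all, because expanding $(\nabla_{K_H}\tor)(E_k,K_V)$, using metric compatibility together with \eqref{E:skew}, and substituting $\nabla_{K_H}K_V=-\tor(K_V,K_H)_V$ already expresses the trace appearing in the fourth term of $\src$ through $K_H\aip{\widehat{\mathfrak{R}}}{K_V}{}$ and the $\Tor$-trace, after which Lemma \ref{L:asym} finishes the statement. Your detour through the Algebraic Bianchi identity \eqref{E:abi} applied to $(K_H,E_i,K_V)$ and $(K_H,U_l,K_V)$ is never executed: the alleged cancellations of the cyclic $\Tor$ pieces and of the $(\nabla_{K_V}\tor)$ pieces are only asserted to ``conspire,'' and that is precisely the identity you were supposed to prove.

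Two of the specific claims propping up that step are not merely unverified but wrong as stated. First, Lemma \ref{L:EP}(d) controls $\nabla_T K_H$ for $T\in VM$; it says nothing about $(\nabla_{K_V}\tor)$, which is a covariant derivative of the torsion tensor of the manifold in a vertical direction and is not governed by any Killing-field identity, so the Bianchi substitution would leave these terms uncontrolled unless you exhibit an explicit cancellation, which you do not. Second, your accounting for the factor $2$ attributes one copy of $T_2$ to $\mathfrak{R}(\tor(K_H,K_V)_V)$; this is a purely algebraic (zeroth-order) torsion expression, whereas $T_2=\aip{\text{tr}(\nabla\tor-\Tor)(K_V)}{K_H}{}$ contains frame-direction derivatives of torsion, so it cannot supply ``a copy of $T_2$.'' (A minor further point: the vertical-frame contributions $\sum_l\aip{(\nabla_{K_H}\tor)(U_l,K_V)}{U_l}{}$ you carry along vanish identically, since $\tor(VM,VM)\subseteq HM$ and $HM$ is parallel, so they are harmless but should be discharged rather than fed into the bookkeeping.) To repair the argument, drop the Bianchi step entirely and finish as the paper does: after the parallel-frame computation and the Killing identity you already hold the required relation between the torsion traces, $K_H\aip{\widehat{\mathfrak{R}}}{K_V}{}$, and the terms of $\src(K_V,K_H)$, and Lemma \ref{L:asym} closes the proof.
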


\begin{proof} This is largely a straight-forward computation, but we shall make a preliminary remark first. Namely, due to a simple  symmetry/skew-symmetry argument we have
\begin{equation}\label{E:skew}
\sum\limits_k \aip{\tor(\nabla_{K_H} E_k,K_V)}{E_k}{}  = 0.
\end{equation}
Then using \eqref{E:skew} and Lemma \ref{L:EP} (c) we see
\begin{align*}
\sum\limits_k  & \aip{\nabla \tor(E_k,K_V,K_H)}{E_k}{} =\sum\limits_k \aip{ \nabla_{K_H} \tor(E_k,K_V)}{E_k}{}  \\
& \qquad \qquad - \aip{ \tor(\nabla_{K_H} E_k,K_V) + \tor(E_k,\nabla_{K_H} K_V)}{E_k}{}\\ 
&\qquad = K_H \aip{ K_V}{\widehat{\mathfrak{R}}}{}  - \sum\limits_k \aip{\tor(E_k, \tor(K_V,K_H)}{E_k}{} \\
&\qquad = K_H \aip{ K_V}{\widehat{\mathfrak{R}}}{} - \sum\limits_k \aip{\Tor(E_k,K_V,K_H)}{E_k}{}
\end{align*}
The result then follows easily from Lemma \ref{L:asym}.

\end{proof}

We next show how to integrate a key portion of the torsion.

\begin{lemma}\label{L:int2}
If $M$ is compact and vertically rigid with $VM$ integrable, then for $K \in \mathscr{K}$
\[ \int_M  \aip{ \text{tr}(\nabla \tor-\Tor)(K) }{K_H}{} dV = \int_M  \left| \tau_H^H(K) \right| ^2\; dV\]
\end{lemma}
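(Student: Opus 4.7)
The plan is to use the divergence theorem applied to a carefully chosen horizontal auxiliary vector field, combined with repeated application of the $H$-Killing identities from Lemma \ref{L:EP}. Because $M$ is compact and vertically rigid, the rigidity-vector contribution $\aip{\widehat{\mathfrak{R}}}{A}{}$ in the divergence formula will pair only with vertical parts, so any horizontal $A$ integrates by parts cleanly: $\int_M \text{div } A\,dV = 0$.

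First I would fix a local orthonormal horizontal frame $\{E_i\}$ and a local orthonormal vertical frame $\{U_a\}$ (which exists with $V$-parallel properties since $VM$ is integrable) and expand
\[ \aip{\text{tr}(\nabla\tor -\Tor)(K)}{K_H}{} = \sum_i \aip{(\nabla_{E_i}\tor)(K,E_i) - \Tor(E_i,K,E_i)}{K_H}{}, \]
splitting $K = K_H + K_V$ throughout so that the derivative terms can be tracked against their horizontal and vertical arguments separately.

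Next, I would introduce the horizontal vector field $A$ defined by $A_H = \sum_i \aip{\tor(K,E_i)}{K_H}{} E_i$ and $A_V = 0$. Expanding $\text{div } A$ in the graded frame $\{E_j,U_a\}$, the vertical-rigidity hypothesis eliminates $\aip{\widehat{\mathfrak{R}}}{A}{}$, and the Leibniz rule produces three types of terms: (i) the desired $\sum_i \aip{(\nabla_{E_i}\tor)(K,E_i)}{K_H}{}$, (ii) contractions involving $\nabla_{E_j} K$ or $\nabla_{U_a} K$ paired with $\tor(K,\cdot)$, and (iii) iterated torsion pieces coming from differentiating $K$ in the direction of $\tor(K,E_i)$. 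Applying Lemma \ref{L:EP}, namely $\nabla_X K_V = \tor(X,K)_V$ for horizontal $X$, $\nabla_T K_H = \tor(K,T)_H$ for vertical $T$, and the skew-symmetry of $\mB$ on $HM$ together with $\mB(VM)=0$, converts every $\nabla K$ factor back into a torsion term. The $\Tor$ contribution on the left matches precisely the nested-torsion piece produced by type (iii), allowing the two to cancel when we integrate and invoke $\int_M \text{div }A\, dV = 0$.

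After this cancellation, the remaining expression is a sum of squared-norm contributions of the form $\sum_{a,i}\aip{\tor(K,U_a)_H}{E_i}{}^2$ (together with the purely horizontal analogues), which is exactly the square-norm $|\tau_H^H(K)|^2$ of the horizontal-to-horizontal torsion operator induced by $K$. The main obstacle is bookkeeping in step two: carefully tracking the vertical-frame contributions to $\text{div } A$ and verifying that the Killing identities convert them into precisely the terms needed so that the $\Tor$-piece on the left cancels and the residue is a genuine perfect square rather than an indefinite quadratic form. The integrability of $VM$ (which makes $\nabla_{U_a}U_a$ well-behaved relative to the splitting) and vertical rigidity combine to eliminate exactly the would-be obstruction terms.
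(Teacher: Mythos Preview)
Your strategy is essentially the paper's: by the torsion symmetry $\aip{\tor(K_V,E_i)}{K_H}{}=\aip{\tor(K_V,K_H)}{E_i}{}$ your auxiliary field $A$ is exactly $\tor(K,K_H)_H$, and the paper likewise computes $\operatorname{div}\tor(K,K_H)_H$ and integrates.  So the architecture is right.

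Two points in the write-up need repair.  First, the vertical frame plays no role: since $A\in HM$ and $HM$ is $\nabla$-parallel, $\aip{\nabla_{U_a}A}{U_a}{}=0$ automatically, so $\operatorname{div}A=\sum_k\aip{\nabla_{E_k}A}{E_k}{}$ and there are no ``vertical-frame contributions'' to chase; in particular the integrability of $VM$ is not used here in the way you suggest.  Second, and more importantly, you misidentify the residual square.  The object $\tau_H^H(K)$ is the operator $X\mapsto\tor(K,X)_H$ on $HM$, so $|\tau_H^H(K)|^2=\sum_k|\tor(K,E_k)_H|^2=\sum_k|\tor(K_V,E_k)_H|^2$; it does \emph{not} arise as $\sum_{a,i}\aip{\tor(K,U_a)_H}{E_i}{}^2$.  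Concretely, after applying Lemma~\ref{L:EP}(a) to replace $\nabla_{E_k}K_V$ by $\tor(E_k,K)_V$ and writing $\nabla_{E_k}K_H=\mB(E_k)-\tor(K,E_k)_H$, the $\mB$ contribution vanishes because $\tor(K_V,\cdot)_H$ is symmetric on $HM$ while $\mB$ is skew, and what remains is exactly $\aip{\operatorname{tr}(\nabla\tor-\Tor)(K)}{K_H}{}-\sum_k|\tor(K,E_k)_H|^2$.  With those corrections your outline becomes the paper's proof.
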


\begin{proof}
Since $M$ is vertically rigid,
\begin{equation}\label{E:int1}
\begin{split}
\text{div} \tor(K,K_H)_H &= \sum\limits_k  \aip{\nabla_{E_k} \tor(K,K_H)}{E_k}{}  \\
& =\sum\limits_k   \aip{ \nabla \tor(K_V,K_H,E_k) + \tor(\nabla_{E_k} K_V,K_H)}{E_k}{} \\
& \qquad +\aip{\tor(K_V,\nabla_{E_k} K_H)}{E_k}{} \\
& = \sum\limits_k  \aip{  \nabla \tor(K,E_k,E_k) +\tor(\tor(E_k,K)_V,E_k) }{K_H}{} \\
& \qquad  + \aip{\tor(K,E_k)}{\mB(E_k)}{} - \left| \tor(K,E_k)_H \right|^2\\
& = \aip{ \text{tr}(\nabla \tor-\Tor)(K) }{K_H}{}- \left| \tau_H^H(K) \right| ^2
\end{split}
\end{equation}
and the result follows immediately.

\end{proof}

\begin{cor}\label{C:int2} If $M$ is compact and totally rigid with $VM$ integrable, then for $K \in \mathscr{K}$
\[ \int_M \text{tr }\srm  (K,K_H) dV  = \int_M \src (K_H,K_H) + \left| \tau_H^H(K) \right| ^2 \; dV \]
\end{cor}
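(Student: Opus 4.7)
The plan is to integrate the pointwise identity of Lemma~\ref{L:KillingMix}, invoke Lemma~\ref{L:int2}, and match what remains against the definition of $\src$. I would begin by splitting $\text{tr }\srm(K, K_H) = \text{tr }\srm(K_H, K_H) + \text{tr }\srm(K_V, K_H)$. Since total rigidity forces $\widehat{\mathfrak{R}} \equiv 0$, the correction term in Lemma~\ref{L:KillingMix} drops out, leaving the clean pointwise identity
\[ \text{tr }\srm(K_V, K_H) = \aip{\text{tr}(\nabla\tor - \Tor)(K_V)}{K_H}{}. \]

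Next, total rigidity implies vertical rigidity, so Lemma~\ref{L:int2} is applicable. The skew-symmetry $\tor(K_H, K_H) = 0$ reduces the divergence $\text{div}\,\tor(K, K_H)_H$ to $\text{div}\,\tor(K_V, K_H)_H$, so the integrand in Lemma~\ref{L:int2} effectively depends only on $K_V$ in its first slot. Integrating the pointwise identity above and identifying it with the output of Lemma~\ref{L:int2} gives
\[ \int_M \text{tr }\srm(K_V, K_H)\, dV \;=\; \int_M |\tau_H^H(K)|^2\, dV. \]
Meanwhile, Definition~\ref{D:sRicci} applied to $A = B = K_H$ simplifies dramatically: the two $A_V$-lines vanish because $A_V = 0$, and the $\Tor(E_k, K_H, K_H)$ summand vanishes because $\tor(K_H, K_H) = 0$. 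What remains identifies $\text{tr }\srm(K_H, K_H)$ with $\src(K_H, K_H)$ up to the scalar $\aip{\text{tr}\Tor(K_H)}{K_H}{}$.

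Summing the integrated contributions from the two pieces produces the desired formula, provided the residual $\int_M \aip{\text{tr}\Tor(K_H)}{K_H}{}\, dV$ is absorbed. This is the step I expect to be the main obstacle: either it vanishes pointwise via the skew-symmetry of $\tor$ together with the basic-grading property $\tor(HM, HM) \subset VM$ applied to the sum $\sum_k \aip{\tor(E_k, \tor(K_H, E_k))}{K_H}{}$, or it is tacitly included in the trace convention of Lemma~\ref{L:int2} (since $\text{tr}(\nabla\tor - \Tor)$ appears in Definition~\ref{D:sRicci} only with vertical input, one might take its horizontal action to be zero by convention, in which case the cancellation is automatic). Reconciling the trace conventions used in Definition~\ref{D:sRicci} versus Lemma~\ref{L:int2} is the bookkeeping step I would spend the most time on; once that is settled, the corollary follows by addition of the two integrated identities.
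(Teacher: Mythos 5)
Your overall strategy---split $\text{tr }\srm(K,K_H)$ into its $K_H$ and $K_V$ pieces, use Lemma \ref{L:KillingMix} with $\widehat{\mathfrak{R}}\equiv 0$, integrate via Lemma \ref{L:int2}, and compare with Definition \ref{D:sRicci}---is the same as the paper's. The gap is exactly the step you flag, and neither of your proposed resolutions works. Lemma \ref{L:int2} is stated for the full field $K$: its right-hand integrand is $\aip{\text{tr}(\nabla\tor-\Tor)(K)}{K_H}{}$, and the observation $\tor(K_H,K_H)=0$ only shows that the divergence being integrated, $\text{div}\,\tor(K,K_H)_H$, equals $\text{div}\,\tor(K_V,K_H)_H$; it does not allow you to replace $K$ by $K_V$ on the right-hand side. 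Tracking the expansion \eqref{E:int1}, the difference $\aip{\text{tr}(\nabla\tor-\Tor)(K)}{K_H}{}-\aip{\text{tr}(\nabla\tor-\Tor)(K_V)}{K_H}{}$ consists of a $\nabla\tor$ piece, which is vertical (since $\tor(HM,HM)\subseteq VM$ and the splitting is parallel) and pairs to zero with $K_H$, and a $\Tor$-type piece built from $\sum_k\tor(E_k,\tor(K_H,E_k))$, which has a nontrivial horizontal part whenever $M$ is not $H$-normal---and $H$-normality is not assumed here, only total rigidity and integrability of $VM$. Your option (a), pointwise vanishing by skew-symmetry, is false as a tensorial statement: with the rototranslation torsion $\tor(X,\Theta)=-T$, $\tor(T,\Theta)=X/2$, one gets $\sum_k\aip{\tor(E_k,\tor(X,E_k))}{X}{}=\aip{\tor(\Theta,-T)}{X}{}=1/2\neq 0$. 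Option (b) is not the resolution either; the term is genuinely present both in Lemma \ref{L:int2} and in Definition \ref{D:sRicci}.

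What closes the argument is a cancellation, not a vanishing. The paper keeps the full $K$ throughout: by Lemma \ref{L:KillingMix} and total rigidity,
\begin{align*}
\text{tr }\srm(K_V,K_H) &= \aip{ \text{tr}(\nabla \tor-\Tor)(K_V) }{K_H}{} \\
&= \aip{\text{tr}(\nabla \tor -\Tor)(K)}{K_H}{} - \aip{ \text{tr }\Tor(K_H) }{K_H}{},
\end{align*}
while Definition \ref{D:sRicci} with $A=B=K_H$ (using $\tor(K_H,K_H)=0$ to kill the $\Tor(E_k,K_H,K_H)$ term and $A_V=0$ to kill the vertical lines) gives
\[
\text{tr }\srm(K_H,K_H) = \src(K_H,K_H) + \aip{ \text{tr }\Tor(K_H) }{K_H}{}.
\]
Adding these, the two $\aip{\text{tr }\Tor(K_H)}{K_H}{}$ contributions cancel, yielding the pointwise identity $\text{tr }\srm(K,K_H)=\src(K_H,K_H)+\aip{\text{tr}(\nabla\tor-\Tor)(K)}{K_H}{}$; integrating and applying Lemma \ref{L:int2} (valid since total rigidity implies vertical rigidity) converts the last term into $\int_M\left|\tau_H^H(K)\right|^2 dV$. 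With that repair your argument coincides with the paper's proof.
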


\begin{proof} First note that since we are assuming total rigidity then
\begin{align*}
\text{tr }\srm(K_V,K_H)& = \aip{ \text{tr}(\nabla \tor-\Tor)(K_V) }{K_H}{} \\
&= \aip{\text{tr}(\nabla \tor -\Tor)(K)}{K_H}{} - \aip{ \text{tr }\Tor(K_H) }{K_H}{}
\end{align*}
Thus \[ \begin{split} \text{tr }\srm(K,K_H) &= \src (K_H,K_H)+ \aip{\text{tr}(\nabla \tor -\Tor)(K)}{K_H}{}  \\
& \qquad - K_H \aip{ \widehat{\mathfrak{R}}}{K_V}{} \end{split}\]

\end{proof}

We are now finally in a position to state our main Bochner result for $H$-Killing fields.

\begin{thm}\label{T:gradalt}
If $K \in \mathscr{K}$ and $Z \in HM$, then
\begin{enumerate}
\item $\displaystyle \text{tr }\srm (K,Z) = - \aip{ \lp K_H +\text{tr}( \nabla \tor-\Tor)(K) }{Z}{}$
\end{enumerate}
If $M$ is totally rigid with $VM$ integrable, then furthermore
\begin{enumerate}
\addtocounter{enumi}{1}
\item $\displaystyle  \int_M \aip{\lp K_H}{K_H}{} dV = \int_M -\src (K_H,K_H)  - 2  \left| \tau_H^H(K) \right| ^2$
\item $\displaystyle 0 = \int_M \frac{1}{2} \lp \left| K_H \right|^2 dV = \int_M -\src (K_H,K_H)  -   \left| \tau_H^H(K) \right| ^2 + \left|\mB  \right|^2 \; dV$
\end{enumerate}
\end{thm}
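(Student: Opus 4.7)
The plan is to derive (1) from Corollary \ref{C:Ric1} by expanding $\text{tr}\nabla\mB$, then set $Z = K_H$ in (1) and integrate to obtain (2), and finally combine (2) with a Bochner-type identity and a pointwise vanishing of a cross-term to prove (3).

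For part (1), I expand $\nabla\mB$ using $\mB(X) = \nabla_X K_H + \tor(K,X)_H$. A direct trace in a horizontal orthonormal frame $\{E_k\}$, via the Leibniz rule for $\tor$, produces
\[
\text{tr}\nabla\mB = \lp K_H + \sum_k \left[(\nabla_{E_k}\tor)(K, E_k) + \tor(\nabla_{E_k} K, E_k)\right]_H.
\]
Writing $\nabla_{E_k} K = \mB(E_k) + \tor(E_k, K)$ (a consequence of Lemma \ref{L:EP}(a) and the definition of $\mB$), the piece $\tor(\mB(E_k), E_k)$ has vanishing horizontal component by the connection axiom $\tor(HM,HM)\subseteq V^1$ from Theorem \ref{T:Connection}, and the remaining term rearranges via $\tor(\tor(E_k, K), E_k) = -\Tor(E_k, E_k, K)$ into the $-\Tor$ portion of $\text{tr}(\nabla\tor - \Tor)(K)$. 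Pairing against $Z \in HM$ and invoking Corollary \ref{C:Ric1} gives (1).

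For part (2), set $Z = K_H$ in (1) and integrate over $M$. Corollary \ref{C:int2} evaluates $\int \text{tr}\srm(K, K_H)\,dV$ as $\int \src(K_H, K_H) + |\tau_H^H(K)|^2\,dV$ under the totally rigid hypothesis, while Lemma \ref{L:int2} provides $\int \aip{\text{tr}(\nabla\tor - \Tor)(K)}{K_H}{}\,dV = \int |\tau_H^H(K)|^2\,dV$. Combining these produces (2).

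For part (3), the pointwise Bochner identity
\[
\frac{1}{2}\lp|K_H|^2 = \aip{\lp K_H}{K_H}{} + |\nabla_H K_H|^2
\]
holds by a routine computation. Total rigidity forces $\int \lp f\,dV = 0$ for every smooth $f$, so integrating and applying (2) yields $\int |\nabla_H K_H|^2\,dV = \int \src(K_H, K_H) + 2|\tau_H^H(K)|^2\,dV$. Using $\nabla_{E_k} K_H = \mB(E_k) - \tor(K, E_k)_H$ gives the pointwise expansion
\[
|\nabla_H K_H|^2 = |\mB|^2 - 2\sum_k \aip{\mB(E_k)}{\tor(K, E_k)_H}{} + |\tau_H^H(K)|^2.
\]
The cross term vanishes pointwise: because $\tor(HM,HM)\subseteq V^1$ only the $K_V$ part contributes, and by the symmetry clause of Theorem \ref{T:Connection} the map $X \mapsto \tor(K_V, X)_H$ is symmetric on $HM$, whereas $\mB$ is skew-symmetric on $HM$ by Lemma \ref{L:EP}, so the trace of their product vanishes. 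Assembling yields (3).

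The main obstacle I anticipate is the bookkeeping in part (1): the Leibniz expansion generates several torsion terms that must be collected into the precise combination $\text{tr}(\nabla\tor - \Tor)(K)$ matching the sub-Ricci definition, requiring careful use of both $\tor(HM,HM)\subseteq V^1$ and the identity $\nabla_X K = \mB(X) + \tor(X, K)$.
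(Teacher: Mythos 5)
Your proposal is correct and follows essentially the same route as the paper: part (1) is Corollary \ref{C:Ric1} plus the Leibniz expansion of $\text{tr}\,\nabla\mB$, part (2) is (1) with $Z=K_H$ integrated using Corollary \ref{C:int2} and Lemma \ref{L:int2}, and part (3) is the standard Bochner identity for $\tfrac{1}{2}\lp\left|K_H\right|^2$. One small remark: your pointwise identity $\left|\nabla_H K_H\right|^2=\left|\mB\right|^2+\left|\tau_H^H(K)\right|^2$ (skew part $\mB$, symmetric part $-\tor(K_V,\cdot)_H$) is the version that actually yields (3) as stated, whereas the paper's closing observation writes the two squared norms transposed, which appears to be a typo.
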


\begin{proof}
From Lemma \ref{L:Curv2}, we see that
\begin{align*}
 -\text{tr }\srm (K,Z) &=\aip{ \text{tr }\nabla \mB}{Z}{}  \\
 &= \sum\limits_i   \aip{ \nabla_{E_i}  \left( \nabla_{E_i} K_H + \tor(K,E_i) \right) }{Z}{}  \\& \qquad -\sum\limits_i \aip{\nabla_{\nabla_{E_i} E_i}  K_H - \tor(K,\nabla_{E_i} E_i) }{Z}{}\\
 &=  \aip{ \lp K_H +\text{tr }(\nabla \tor-\Tor)(K) }{Z}{}.
\end{align*}
The remaining results then follow easily from Lemma \ref{L:KillingMix},  Corollary \ref{C:int2} and the observation that
\[ \left| \mB \right|^2 = \left| \nabla_H K_H \right|^2 + \left| \tau_H^H(K) \right|^2 .\]
\end{proof}
This result can be interpreted as saying that the Ricci curvature measures the difference in size between the symmetric and skew-symmetric parts of $\nabla K_H$. The most useful results come in the $H$-normal case, where $\nabla K_H$ is known to be skew-symmetric

This Bochner type theorem can be used to derive some consequences of curvature on the space of Killing forms. For negative curvatures, we have the following generalization of a classical result of Bochner.

\begin{lemma}\label{L:Bochner2}
Suppose that $M$ is $H$-normal, $VM$ is integrable and $\src(X,X) \leq 0$ for all $X \in HM$, then for all $K \in \mathscr{K}$, $\nabla K_H \equiv 0$. Thus $\mathscr{K}=\mathscr{K}_B$.

Furthermore if there is some $p \in M$  such that $\src(X,X)<0$ for all $X \in H_pM -\{0\}$  then $\mathscr{K} = \mathscr{K}_V$.\end{lemma}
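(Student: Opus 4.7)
The plan is the standard Bochner argument, adapted to the $H$-normal setting. The starting point is Corollary \ref{C:Ric}, which under the stated hypotheses asserts $\aip{\lp K_H}{Z}{}=-\src(K,Z)$ for every horizontal $Z$. Setting $Z=K_H$ and using $\src(K_V,K_H)=0$ from Lemma \ref{L:asym} yields the pointwise inequality $\aip{\lp K_H}{K_H}{} = -\src(K_H,K_H)\ge 0$.

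I would then combine this with the pointwise Bochner identity
\[ \tfrac{1}{2}\lp|K_H|^2 = \aip{\lp K_H}{K_H}{} + |\nabla_H K_H|^2, \]
which follows from the definition of $\lp$ together with the parallelism of $HM$ under $\nabla$ (guaranteed by Theorem \ref{T:Connection}). The inequality above shows $\lp|K_H|^2 \ge 0$ on the compact manifold $M$. The main obstacle is the passage from subharmonicity to actual vanishing; the cleanest route is to invoke the strong maximum principle for the subelliptic operator $\lp$, valid since $HM$ bracket-generates, which forces $|K_H|^2$ to be constant. Hence $\lp|K_H|^2\equiv 0$ and both non-negative summands must vanish: $\nabla_H K_H \equiv 0$ and $\src(K_H,K_H)\equiv 0$. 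Alternatively, if the metric extension is chosen vertically rigid, $\int_M \lp f\,dV=0$ and the conclusion follows by direct integration.

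To upgrade $\nabla_H K_H\equiv 0$ to $\nabla K_H\equiv 0$, I would check that $\nabla_T K_H=0$ for every $T\in VM$. With the basic grading, the axiom $\tor(V^0,V^0)\cap V^0=\{0\}$ forces $\tor(HM,HM)\subseteq VM$, while $H$-normality gives $\tor(HM,VM)\subseteq VM$; since $VM$ is parallel and integrable, $\tor(VM,VM)\subseteq VM$ as well. Consequently $\tor(K,A)_H=0$ for every $A\in TM$. Lemma \ref{L:EP}(d) then gives $\nabla_T K_H=\tor(K,T)_H=0$, while the same vanishing reduces $\mB$ on $HM$ to the operator $\nabla_{(\cdot)}K_H$. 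Thus $\nabla_H K_H\equiv 0$ is equivalent to $\mB\equiv 0$, proving $\mathscr{K}=\mathscr{K}_B$.

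For the strict-negativity assertion, if $\src(X,X)<0$ on $H_pM\setminus\{0\}$ at some $p$, the pointwise identity $\src(K_H,K_H)\equiv 0$ forces $K_H(p)=0$; since $|K_H|$ is constant on the connected manifold $M$, we conclude $K_H\equiv 0$, and so $K\in\mathscr{K}_V$.
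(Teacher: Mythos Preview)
Your argument is correct and follows essentially the same Bochner route as the paper: the paper invokes Theorem \ref{T:gradalt}(c) to integrate $\tfrac{1}{2}\lp|K_H|^2$ directly and obtain $\int_M\bigl(|\nabla_H K_H|^2-\src(K_H,K_H)\bigr)=0$, while you reach the same identity pointwise from Corollary \ref{C:Ric} and then offer either the strong maximum principle or integration; the passage to $\nabla_T K_H=0$ and the strict-negativity step are likewise the same as the paper's, only phrased via Lemma \ref{L:EP}(d) rather than by writing out the torsion identity explicitly.
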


\begin{proof} For the main part of the corollary, we simply note that if $M$ is $H$-normal and $VM$ integrable then Theorem \ref{T:gradalt} implies 
\begin{equation}\label{E:IntL} 0 = \int_M \lp f = \int  \left( \left| \nabla_H K_H \right|^2 - \src(K_H,K_H) \right) \geq \int \left| \nabla_H K_H \right|^2 \geq 0  .\end{equation}
This clearly implies that $\nabla_H K_H=0$. However if $T$ is a section of $VM$, then
\[  \nabla_K T - \nabla_T K - [K,T] = \tor(K,T) \in VM.\]
But then projecting to $HM$, we see $\nabla_T K_H=0$, so $K_H$ is parallel.

Now if there is a point $p$ where $\src$ is strictly negative, then we would  have an impossible strict inequality in \eqref{E:IntL} unless $K_H = 0$ at $p$. As $K_H$ is parallel, this then implies that $K_H \equiv 0$.

\end{proof}

The Heisenberg group in  Example \ref{X:Heisn} shows that the compactness condition is necessary here. From this we can obtain a sequence of easy corollaries.

\begin{cor}\label{C:negnot}
There are no compact, $H$-normal, sRC-manifolds with $VM$ integrable and quasi-negative Ricci curvature that are homogeneous under the action of $\text{Iso}(M)$
\end{cor}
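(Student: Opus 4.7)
The plan is to argue by contradiction, combining Lemma \ref{L:Bochner2} with the standard fact that a transitive smooth Lie group action has evaluation maps that are surjective onto tangent spaces. Suppose such an $M$ exists.

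First, I would apply Lemma \ref{L:Bochner2} directly. Its hypotheses, namely compactness, $H$-normality, integrability of $VM$, and $\src(X,X)\leq 0$ on $HM$ with strict inequality on $H_pM\setminus\{0\}$ at some $p$, are exactly what ``quasi-negative Ricci curvature'' supplies. The lemma then yields $\mathscr{K}=\mathscr{K}_V$, so every smooth $H$-Killing field is purely vertical; in particular $K_q \in V_qM$ for every $K\in\mathscr{K}$ and every $q\in M$.

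Next, I would invoke Corollary \ref{C:LG} to identify $G=\text{Iso}(M)$ as a Lie group, whose Lie algebra is $\mathscr{K}$ by Lemma \ref{L:Killing}(3). Transitivity of the action means the orbit map $G\to M$, $g\mapsto g\cdot q$, is a smooth submersion at each $q\in M$. Differentiating at the identity shows that the evaluation map $\mathrm{ev}_q\colon \mathscr{K}\to T_qM$, $K\mapsto K_q$, is surjective. Combining with the previous step gives
\[ T_qM \;=\; \mathrm{ev}_q(\mathscr{K}) \;\subseteq\; V_qM, \]
so $H_qM=\{0\}$. This contradicts the requirement that $HM$ be a nonzero bracket-generating sub-bundle, which is part of the definition of a sub-Riemannian manifold. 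Hence no such $M$ exists.

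I do not anticipate a substantive obstacle here: all of the analytic work has already been absorbed into Lemma \ref{L:Bochner2}, and the only remaining ingredient is the elementary observation that the fundamental vector fields of a transitive Lie group action span the tangent space at every point. The one subtlety worth stating carefully in the write-up is the identification of $\mathscr{K}$ as both the Lie algebra of $\text{Iso}(M)$ and as the space of fundamental vector fields, but this is already encoded in Lemma \ref{L:Killing} and Corollary \ref{C:LG}.
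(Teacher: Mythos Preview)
Your proposal is correct and follows essentially the same approach as the paper: both invoke Lemma \ref{L:Bochner2} to conclude $\mathscr{K}=\mathscr{K}_V$ and then argue this is incompatible with transitivity. The only cosmetic difference is that the paper phrases the contradiction as a dimension bound ($\dim\text{Iso}(M)\leq\dim VM<\dim M$), whereas you argue directly that the image of the evaluation map lies in $V_qM$ and hence cannot be all of $T_qM$.
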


\begin{proof}
Under these conditions $\mathscr{K} =\mathscr{K}_V$ and hence the dimension of the group $\text{Iso}(M)$ is at most $\dim VM$. 
\end{proof}

\begin{cor}\label{C:flat2}
If $M$ is $H$-normal, $\src(X,X) = 0$ and $\dim \mathscr{K} = \dim M$ then $M$ is parallelizeable and $H$-flat.
\end{cor}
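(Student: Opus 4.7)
The plan is to combine the preceding Bochner-type lemma with the injectivity of the evaluation map $\chi_p$ to show that the $H$-Killing fields give a parallelization of $M$, and then to feed this back into Lemma \ref{L:Curv2} to kill the curvature. First I would apply Lemma \ref{L:Bochner2} to the hypothesis $\src(X,X)=0$ (a special case of $\src(X,X)\le 0$), so that every $K\in\mathscr{K}$ satisfies $\nabla K_H\equiv 0$, and indeed $\mathscr{K}=\mathscr{K}_B$. Thus $\mB\equiv 0$ for every Killing field on $M$.

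Next I would use Theorem \ref{T:Basic}: the map $\chi_p(K)=(K_{|p},(\mB)_{|p})$ is injective, and since its second component vanishes identically on $\mathscr{K}=\mathscr{K}_B$, the restricted evaluation map $\mathrm{ev}_p\colon \mathscr{K}\to T_pM$, $K\mapsto K_{|p}$, is injective at every point $p\in M$. Because $\dim\mathscr{K}=\dim M=\dim T_pM$, the map $\mathrm{ev}_p$ is a linear isomorphism for every $p$. Choosing any vector space basis $K_1,\dots,K_n$ of $\mathscr{K}$ (with $n=\dim M$) therefore produces smooth vector fields whose values are linearly independent at every point, i.e.\ a global frame for $TM$. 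Hence $M$ is parallelizable.

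Finally, for $H$-flatness, I would apply the second assertion of Lemma \ref{L:Curv2}: for every $K\in\mathscr{K}$ and all horizontal $X,Y$,
\[
R(X,K)Y=\nabla\mB(Y,X)=0,
\]
since $\mB\equiv 0$ implies $\nabla\mB\equiv 0$ as well. For a fixed $p$ and arbitrary $W\in T_pM$, the isomorphism $\mathrm{ev}_p$ gives $W=K_{|p}$ for some $K\in\mathscr{K}$, so $R(X,W)Y=0$ for every $W\in T_pM$ and all horizontal $X,Y\in H_pM$. Skew-symmetry of $R$ in its first two slots then yields $R(W,X)Y=0$ for all such $W,X,Y$, and in particular the purely horizontal sectional curvatures vanish, which is exactly the asserted $H$-flatness.

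There is not really a hard step here: once one invokes the Bochner conclusion $\mathscr{K}=\mathscr{K}_B$, the dimension hypothesis does the rest through the injectivity of $\chi_p$. The only subtlety is that Lemma \ref{L:Bochner2} is stated under the standing assumptions of this subsection (compactness of $M$, vertical rigidity of the chosen metric extension, and $VM$ integrable), which are tacitly in force here as well; the proof would note this and otherwise just string together Lemma \ref{L:Bochner2}, Theorem \ref{T:Basic}, and Lemma \ref{L:Curv2} as above.
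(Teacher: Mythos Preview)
Your proof is correct and follows the same overall strategy as the paper: invoke Lemma~\ref{L:Bochner2} to get $\mathscr{K}=\mathscr{K}_B$, use the injectivity of $\chi_p$ together with the dimension hypothesis to produce a global frame of Killing fields, and then read off flatness from a curvature identity. The only difference is in this last step. The paper appeals to Lemma~\ref{L:Bil}: once $\mB[K]\equiv 0$ for every $K\in\mathscr{K}$, that identity collapses to $R(K,L)=0$ as an operator on $HM$, and since the Killing fields span $T_pM$ at every point this gives $R(A,B)X=0$ for all $A,B\in T_pM$ and horizontal $X$. You instead use the second assertion of Lemma~\ref{L:Curv2} to obtain $R(X,K)Y=\nabla\mB(Y,X)=0$ for horizontal $X,Y$, and then fill in the second slot using the pointwise span. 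Both routes are valid; the paper's choice via Lemma~\ref{L:Bil} is slightly cleaner in that it immediately gives vanishing of $R(A,B)$ on $HM$ for \emph{arbitrary} $A,B$, while your argument nails down the case where at least one of the first two arguments is horizontal and then invokes skew-symmetry.
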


\begin{proof}
Since $\dim \mathscr{K} = \dim M$ and $\mathscr{K}=\mathscr{K}_B$, we see that there is a global frame of $H$-Killing fields.

From Lemma \ref{L:Bil} it follows immediately that $M$ is $H$-flat.

\end{proof}

Positive Ricci curvature appears to place fewer restrictions on the Killing fields. However for manifolds with positive sectional curvature, we have the following generalization of a classical theorem of Berger  (see \cite{Berger} or \cite{Wallach} ).

\begin{lemma}\label{L:Berger}
Suppose $M$ is $H$-normal   with $VM$ integrable and  $\text{dim}(HM)$ even. If every horizontal sectional curvature is positive, then every Killing field $K$ is vertical at some point $p\in M$.
\end{lemma}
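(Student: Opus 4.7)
I would adapt Berger's classical argument to the sub-Riemannian setting. Suppose for contradiction that $K_H$ is nowhere zero; since $M$ is compact, the smooth function $f = |K_H|^2$ attains a positive minimum at some $p \in M$. Using the basic grading, $H$-normality and $VM$-integrability together with the torsion axioms of Theorem~\ref{T:Connection}, one sees that $\tor(HM,HM) \subseteq VM$, $\tor(HM,VM)\subseteq VM$, and $\tor(VM,VM)=0$. In particular $\tor(K,X)_H = 0$ for horizontal $X$, so $\nabla_X K_H = \mB(X)$, while $\nabla_T K_H = \tor(K,T)_H = 0$ for vertical $T$ by Lemma~\ref{L:EP}(d). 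Applying the critical-point condition $X(f)|_p = 0$ for horizontal $X$, together with the skew-symmetry of $\mB$ on $HM$, then forces $\mB(K_H)|_p = 0$.

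The parity hypothesis enters next. The operator $\mB|_p$ is skew-adjoint on the even-dimensional space $H_pM$ and annihilates $K_H|_p$, hence restricts to a skew-adjoint endomorphism of the $(\dim HM - 1)$-dimensional, and therefore odd-dimensional, subspace $K_H|_p^{\perp} \cap H_pM$; such an operator has a nontrivial kernel. Pick $X \in H_pM$ with $X \perp K_H|_p$, $X \ne 0$ and $\mB(X)|_p = 0$. Differentiating $X(f) = 2\aip{\mB(X)}{K_H}{}$ once more, invoking the identity $R(X,K)Y = (\nabla_X \mB)(Y)$ from Lemma~\ref{L:Curv2}, and using $\nabla_X K_H = \mB(X)$ together with $\mB(K_H)|_p = 0$, yields the Hessian formula
\[
\mathrm{Hess}(f)(X,X)\bigl|_p \;=\; 2\aip{R(X,K)X}{K_H}{}\bigl|_p + 2|\mB(X)|_p|^2 \;=\; 2\aip{R(X,K)X}{K_H}{}\bigl|_p,
\]
which must be non-negative at the minimum.

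The hard part is controlling the vertical contribution in the decomposition $R(X,K)X = R(X,K_H)X + R(X,K_V)X$, since the horizontal sectional curvature hypothesis only constrains the first summand. The key intermediate claim to establish is that $R(X,T)Y \equiv 0$ for all horizontal $X,Y$ and vertical $T$ in this $H$-normal, $VM$-integrable setting. Since both $HM$ and $VM$ are parallel under $\nabla$, the curvature preserves the splitting, so $R(Y,X)T \in VM$ while $R(X,T)Y,\, R(T,Y)X \in HM$; the torsion containments above ensure that each summand of $\mathcal{C}(\nabla\tor)(X,T,Y) - \mathcal{C}\Tor(X,T,Y)$ is vertical. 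Projecting the algebraic Bianchi identity \eqref{E:abi} onto $HM$ therefore yields $R(X,T)Y + R(T,Y)X = 0$, so $R(X,T)Y = R(Y,T)X$ is symmetric in its horizontal arguments; combining this symmetry with the metric skew-symmetry $\aip{R(X,T)Y}{Z}{} = -\aip{R(X,T)Z}{Y}{}$ forces $R(X,T)Y = 0$. Taking $T = K_V$ gives $\aip{R(X,K_V)X}{K_H}{} = 0$, and hence
\[
\aip{R(X,K)X}{K_H}{}\bigl|_p \;=\; \srm(X,K_H,X,K_H)\bigl|_p \;=\; -\mathrm{sec}(X,K_H)\,|X \wedge K_H|^2 \;<\; 0
\]
by positivity of the horizontal sectional curvature and $X \perp K_H$, both nonzero in $H_pM$. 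This contradicts the non-negativity of the Hessian at $p$, and therefore $K_H$ must vanish at some point of $M$.
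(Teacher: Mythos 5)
Your argument is correct and follows essentially the same route as the paper: minimize $|K_H|^2$ on the compact manifold, use Lemma \ref{L:Curv2} (with $H$-normality giving $\nabla_X K_H = \mB(X)$) to express the Hessian at the minimum as a curvature term plus $|\mB(X)|^2$, and exploit the parity of the skew-adjoint operator $\mB$ on the even-dimensional $H_pM$ to produce a null direction independent of $K_H$, contradicting positive horizontal sectional curvature. The only notable difference is that you verify the vanishing of the mixed term $\aip{R(X,K_V)X}{K_H}{}$ explicitly via the algebraic Bianchi identity \eqref{E:abi}, a step the paper dispatches more briefly.
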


\begin{proof}
Choose $K \in \mathscr{K}$ and set $f = \frac{1}{2} \left| K_H \right|^2$. Then Lemma \ref{L:Curv2}  implies that for any horizontal vector field $E$,
\begin{align*}
 \nabla^2 f (E,E) &= \aip{\nabla^2 K_H(E,E)}{K_H}{} + \left| \nabla_E K_H \right|^2\\
 &= \aip{ R(E,K)E}{K_H}{} + \left| \nabla_E K_H \right|^2\\
 &= -R(E,K_H,K_H,E) + \left| \mB(E) \right|^2
 \end{align*}
 where the last line follows from Lemma \ref{L:KillingMix} and $H$-normality.

Recall that for all $p \in m$, the map $\left( \mB \right)_{|p} \colon H_pM \to H_pM$ is skew-symmetric. Now if $v  \in \ker{(\mB)_{|p}} $ we see that
\[\nabla^2 f (v,v)  = -R(v,K_H,K_H,v.)\]
If $v$ is linearly independent from $K_H$, then the positive sectional curvature constraint implies that $\nabla^2 f(v,v)<0$. 

Now let $p\in M$ be the point at which $f$ attains it's minimum. Then at $p$,  $\nabla f =0$ and $\nabla^2 f \geq 0$. But at $p$, we also have $K_H \in \ker{(\mB)_{|p}} $. As $HM$ is even dimensional, $\ker{(\mB)_{|p}} $ is also even dimensional and so if $K_H \ne 0$ then $\ker{(\mB)_{|p}} $ must contain a vector $v$ linearly independent from $K_H$. This is a contradiction and $K_H$ must vanish at $p$.

\end{proof}

For the final result in this section, we illustrate one way in which several results of this paper can be combined. We note that $M$ has positive sectional curvatures and a full set of purely vertical $H$-Killing fields then a large number of  components of the curvature must vanish identically.

\begin{cor}\label{C:Berger2}
Under the same conditions as Lemma \ref{L:Berger}, if $\dim \mathscr{K}_V = \dim VM$ then  $\mathscr{K}_B = \mathscr{K}_V$ and  hence
\[ R(T,U)X =0 \]
for all $T,U \in VM$ and $X \in HM$.
\end{cor}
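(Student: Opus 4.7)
The plan is to prove both inclusions of $\mathscr{K}_B = \mathscr{K}_V$ separately and then derive the curvature vanishing from Lemma \ref{L:Bil}. The inclusion $\mathscr{K}_V \subseteq \mathscr{K}_B$ is easy from the definitions: for $T \in \mathscr{K}_V$, Lemma \ref{L:EP}(e) already gives $\mB(VM) = 0$, and for horizontal $X$ we have $\mB(X) = \nabla_X T_H + \tor(T,X)_H = \tor(T_V, X)_H$, which vanishes by Corollary \ref{C:vertelem}.

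For the reverse inclusion, I would take $K \in \mathscr{K}_B$ and invoke Lemma \ref{L:Berger} — whose hypotheses ($H$-normal, $VM$ integrable, $\dim HM$ even, positive horizontal sectional curvatures) are exactly those assumed here — to produce a point $p \in M$ where $K_H(p) = 0$, so $K(p) \in V_pM$. The hypothesis $\dim \mathscr{K}_V = \dim VM$ (together with Corollary \ref{C:pureV}, which ensures nonzero elements of $\mathscr{K}_V$ never vanish) implies that $\mathscr{K}_V$ furnishes a global frame for $VM$, so I can pick $\tilde K \in \mathscr{K}_V$ with $\tilde K(p) = K(p)$. Then $K - \tilde K \in \mathscr{K}_B$ (using $\mathscr{K}_V \subseteq \mathscr{K}_B$ just established), so its $\mB$ operator vanishes identically, and it vanishes at $p$. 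Injectivity of $\chi_p$ from Theorem \ref{T:Basic} then forces $K = \tilde K \in \mathscr{K}_V$.

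For the curvature statement, I would apply Lemma \ref{L:Bil} to $K, L \in \mathscr{K}_V = \mathscr{K}_B$. Because $VM$ is integrable and both fields are sections of $VM$, the bracket $[K, L]$ is vertical; being also a Killing field, $[K, L] \in \mathscr{K}_V = \mathscr{K}_B$, so $\mB[{[K,L]}] \equiv 0$. The other terms $[\mB, \mB[L]]$, $\nabla_K \mB[L]$, and $\nabla_L \mB[K]$ all vanish since $\mB = \mB[L] = 0$, and Lemma \ref{L:Bil} collapses to $R(K, L) X = 0$ for all horizontal $X$. Since $\mathscr{K}_V$ spans $V_p M$ pointwise, tensoriality of $R$ in its first two arguments yields $R(T, U) X = 0$ for all $T, U \in VM$ and $X \in HM$.

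The main obstacle is really just assembling the pieces correctly. The one subtle point is verifying that Lemma \ref{L:Berger} gives exactly what is needed — a point where $K$ becomes entirely vertical — so that the matching argument via $\mathscr{K}_V$ and Theorem \ref{T:Basic} can be executed; the rest is mechanical application of previously established lemmas.
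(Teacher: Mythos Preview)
Your proposal is correct and follows essentially the same argument as the paper: establish $\mathscr{K}_V \subseteq \mathscr{K}_B$ directly (the paper cites $H$-normality, you cite Corollary~\ref{C:vertelem}, but the content is the same), then for $K \in \mathscr{K}_B$ use Lemma~\ref{L:Berger} to find a point where $K$ is vertical, match it there with an element of $\mathscr{K}_V$ via the frame provided by $\dim\mathscr{K}_V = \dim VM$ and Corollary~\ref{C:pureV}, and conclude by injectivity of $\chi_p$; the curvature vanishing is then read off Lemma~\ref{L:Bil} exactly as you describe. Your write-up simply fills in more of the details that the paper leaves implicit.
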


\begin{proof}
As $M$ is $H$-normal, from the basic definitions, we must have $\mathscr{K}_V \subseteq \mathscr{K}_B$. But as $\dim \mathscr{K}_V = \dim VM$,  at every point $p \in M$, the first component of the map $\chi_p$ from Theorem \ref{T:Basic} must induce a  bijection between $\mathscr{K}_V$ and $V_pM$. 

Suppose $K \in \mathscr{K}_B$. As  every $H$-Killing field must be purely vertical at some point of $M$, there must be a point $p$ and $L \in \mathscr{K}_V$ such that $\chi_p (K) = \chi_p(L) \in V_p M \times \{0\} $. But as $\chi_p$ is injective by Theorem \ref{T:Basic}, this implies that $K = L \in \mathscr{K}_V$.

Since $VM$ is integrable, we see that $\mathscr{K}_V$,  and hence  $\mathscr{K}_B$, is a Lie subalgebra of $\mathscr{K}$.  The result then follows easily from  Lemma \ref{L:Bil}.

\end{proof}

%If $(M,g)$ is strictly normal then
%\begin{align*}
%\frac{1}{2} \lp \left| K \right|^2 &= -G(K,K)  - \aip{\nabla_{E_i} K_H}{E_j}{} \aip{ \tor(E_i,E_j)}{K_V}{} + \left| \nabla_H K \right|^2 \\
%& \qquad + \frac{1}{4}  \aip{ \tor(E_i,E_j)}{K_V}{} ^2\\
%& \geq \left| \nabla_H K_V \right|^2 -G(K,K) - \left| \nabla_H K_H\right|^2 
%\end{align*}
%Thus
%\[ \int_M \left| \nabla_H K\right|^2  \leq \int_M G(K,K) - 2\src(K_H,K_H) dV \]
%so
%\[ \int_M G(K,K) dV  \geq 2 \int_M \src(K_H,K_H) dV\]
%\[ \int_M  \left| \nabla_H K_V \right|^2 -G(K,K) + \src(K_H,K_H) \leq 0 \]
%or
%\[ \int_M  \left| \nabla_H K_V \right|^2 dV  \leq \int_M G(K,K) - \src(K_H,K_H) dV \]

\section{Examples}\label{Ex}

\subsection{Carnot Groups}

A Carnot group (of step size $r$) is a connected, simply connected Lie group $G$ with a stratified Lie algebra $\mathfrak{g}$ 
\[ \mathfrak{g} = \mathfrak{g}_1 \oplus \dots  \oplus \mathfrak{g}_r \quad \text{such that} \quad [\mathfrak{g}_i ,\mathfrak{g}_j] = \mathfrak{g}_{i+j} . \]
The horizontal bundle $HG$ is spanned by the left translates of $\mathfrak{g}_1$ with $VM$ spanned by the left translates of $\mathfrak{g}_2 \oplus \dots \oplus \mathfrak{g}_r$. To complete the sRC-structure, we assume that $G$ is equipped with a left invariant metric on $HG$.

It is easy to see that this sRC-structure for $G$ is $H$-normal. It is typical in the literature to extend the metric to a left-invariant metric that respects the stratification. However when $r>2$ this leads to a vertically rigid, but not $V$-normal structure. We shall take a different approach here.  

The left translations of $G$ clearly are sRC-isomorphisms. Thus the one- parameter subgroups of $G$ induce a subspace of $\mathscr{K}$ with dimension $\dim G$. Now since the Lie bracket of any left invariant vector fields has no horizontal  component,  it is easy to check that any left-invariant horizontal vector field is parallel. This implies that if $K$ is an $H$-Killing field associated to a one-parameter subgroup, then for any left-invariant horizontal vector field $X$
\[ 0=[ K,X] = \nabla_K X - \nabla_X K_H - \tor(K,X)_H =  - \nabla_X K_H.\]
Thus $\mB \equiv 0$. This implies that $K \in \mathscr{K}_B$ and so $\dim \mathscr{K}_B = \dim G$. But this immediately implies that $\dim \mathscr{K}_V = \dim VG$. Hence by Corollary \ref{C:HNorm}, $G$ admits a strictly normal metric extension. We shall call such a metric a Killing metric for $G$. In general however, a Killing metric may not be left-invariant. 

 If we let $X_1,\dots,X_k$ be an orthonormal left-invariant frame for $HG$ and set $\theta^1,\dots,\theta^k$ be the dual frame of $1$-forms (which also annihilate $VG$), then it is clear that each $\theta^i$ is closed. Since any Carnot group is known to be diffeomorphic to Euclidean space, this implies that each $\theta^i$ is globally exact and hence there are functions $x^1,\dots,x^k$ such that $X_i x^j = \delta_i^j$. We shall call these the horizontal coordinates for the frame  $X_1,\dots,X_k$.

Since all left-invariant horizontal vector fields are parallel, from Lemma \ref{L:EP} (f) and Lemma \ref{L:Curv2} we see that  $\nabla K_H(VG) \equiv 0$ and $\nabla^2 K_H \equiv 0$. Thus the coefficients of $K_H$ with respect to any left-invariant orthonormal frame must be affine linear functions of the  corresponding horizontal coordinates.

\begin{exm}\label{X:Heisn2}
We consider the Heisenberg groups of Example \ref{X:Heisn}. If we ordering the global  basis of left invariant vector fields by $X_1,\dots,X_n,Y_1,\dots,Y_n$, the torsion matrix is
\[ \breve{T} = \begin{pmatrix} 0 & I_n \\ -I_n & 0 \end{pmatrix} \otimes T .\]
As $\dim V\mathbb{H}^{n} =1$,  we immediately see from the examples of Example \ref{X:Heisn} and Lemma \ref{L:vd2}  that all isotropy groups $\text{Iso}_p(\mathbb{H}^{n})$ are isomorphic to $U(n)$. Hence as vector spaces
\[ \mathscr{K}^*(\mathbb{H}^{n})  \cong \text{Lie}(\mathbb{H}^{n}) \oplus  \mathfrak{u}(n).\]

\end{exm}

\begin{exm}\label{X:Engel}
The Engel group $G$ is $\mathbb{R}^{4}$ with  $HG =\text{span} \langle X,Y \rangle$ and $VM =\text{span} \langle T_1,T_2 \rangle$ are spanned by the left-invariant vector fields
\[ X = \pd{}{x} -y \pd{}{t^1} - t^1\pd{}{t^2} , \quad  Y =\pd{}{y} , \quad T_1= \pd{}{t^1}  , \quad T_2 = \pd{}{t^2}\]
The left-invariant metric on $HM$ is defined by declaring $X,Y$ to be orthonormal. The space of vertical $H$-Killing fields is easily seen to be spanned by
\[ S_1= T_1-x T_2 , \qquad S_2 =T_2.\]
If we let $V^1 = \text{span}(T_1)$ and $V^2 =\text{span}(T_2)$ then we have a regular grading for $M$. It is also easy to verify that
\begin{equation}\label{E:englie} \mathscr{K}^R_B = \text{span} \left\langle S_1, \; S_2, \;   Y -xS_1\;  X-yS_1 -(xy+t^1)S_2 \right\rangle .\end{equation}

In the notation of Theorem \ref{T:higher}, applied with $m=1$,  we see that $L =\langle Y \rangle$.  But then $\mB$ is a skew-symmetric linear map on a one dimensional vector space. Hence $\mB \cong 0$ at $p$. Thus $\mathscr{K}^R \cong  \mathscr{K}^R_B$.

In fact, in this example we can use flatness to get a stronger result.  If we work with the basic grading, a Killing metric can be defined by declaring $S_1,S_2$ to be an orthonormal frame for $VM$.  Let $p=(0,0,0,0)$ and let $K \in \mathscr{K}_p$. Then using the general results from Carnot groups, we see that up to a constant rescaling we must have
\[ K_H = y X - x Y\]
But then
\begin{align*}
 0 &= \tor([K,X],Y) + \tor(X,[K,Y]) = [K,\tor(X,Y)] \\
 &= [ K, S_1 +xS_2]  = [K,S_1]  + y S_2 + x [K,S_2] 
\end{align*}
But $[K,S_i]$ must be a purely vertical $H$-Killing field for $i=1,2$ and hence must be a constant linear combination of $S_1,S_2$. This is a contradiction and hence $K \equiv 0$. Thus by dimension count,
 for the Engel group 
 \[ \mathscr{K} \equiv \mathscr{K}_B \equiv \mathscr{K}^R_B =\mathscr{K}^R .\]
\end{exm}

\subsection{Strictly pseudoconvex pseudohermitian manifolds}

A pseudohermitian manifold $M^{2n+1}$ is a odd dimensional manifold equipped with a non-vanishing $1$-form $\eta$ and an endomorphism $J \colon \ker{\eta} \to \ker{\eta}$ with $J^2=-1$. The manifold is strictly pseudoconvex if the bilinear form $d\eta(X,JY)$ is positive definite on $\ker{\eta}$ and hence is a sub-Riemannian metric for $\ker{\eta}$. There is then a unique characteristic vector field $T$ such that $\eta(T)=0$, $d\eta(T,\cdot)=0$.  Setting
\[ HM = \ker{\eta}, \qquad VM = \langle T \rangle \]
makes $M$ an sRC-manifold and we can the define the Levi metric extension by  defining $JT=0$ and setting
\[ g(A,B) = d\eta(A,JB) + \eta(A)\eta(B).\]
It was shown in \cite{Hladky4} that the sRC-connection associated to the Levi metric is exactly the well-known Tanaka-Webster connection.  It is then easy to see that with this metric extension $M$ is totally rigid and $V$-normal.

If we work with a $J$ graded orthonormal  frame $X_1,\dots,X_n,JX_1,\dots,JX_n$ the torsion operator $\tor \colon H_p M \times H_p M \to VM$ can be identified with a skew-symmetric matrix  
\begin{equation}\label{breve}
\breve{T} = \begin{pmatrix} 0 & I_n \\ -I_n & 0 \end{pmatrix}  \otimes T.
\end{equation}

Since $\dim VM=1$, we see that $M$ is vertically discrete.  Indeed $\Omega_p = [-1,1] \otimes T$ so $GL(\Omega_p)=\{ \pm 1\}$  It is then clear from  remarks following Lemma \ref{L:vd2} that $\text{Iso}_p(M) \subseteq U(n)$. From this we immediately have that for all $K \in \mathscr{K}$,  $[K,JX]=J[K,X]$. Since $ \nabla JX = J \nabla X$, this immediately implies that for all $X \in HM$
\[ \mB (JX) = J \mB(X).\]  
Thus $\mathscr{K}$ consists of the Riemannian Killing fields for the Levi metric that preserve the decomposition $TM=HM \oplus VM$.

For $p \in M$, the operator $\mathcal{T}^0_T (X) = \tor(T,X)$ is self-adjoint with respect to the pointwise inner product and so has a basis of eigenvectors. Since $\mathcal{T}^0_T$ anti-commutes with $J$, the eigenvalues come in pairs $\pm \lambda$ with $J \colon E_{\lambda} \to E_{-\lambda}$ an isomorphism.  Thus there is an orthogonal decomposition
\[ H_pM = E_0  \oplus \left( E_{\lambda_1} \oplus E_{(-\lambda_1)} \right)\oplus \dots \oplus  \left(E_{\lambda_k} \oplus E_{(-\lambda_k)} \right) \]
Let $2h_0 = \dim(E_0)$ and $h_i = \dim(E_{\lambda_i})=\dim(E_{-\lambda_i})$ for $i>0$.

 By \eqref{E:commutes}, if $K_p=0$ then $\mB$ and $\mathcal{T}^0_T$ commute at $p$. Thus at $p$, $\mB$ decomposes into skew-symmetric operators $E_{\lambda_i} \to E_{\lambda_i}$ on each of the eigenspaces of $\mathcal{T}^0_T$.

For $\lambda_i>0$, the dimension of skew-symmetric bilinear forms on $E_{\lambda_i}$ is given by $\frac{h_i(h_i-1)}{2}$. Furthermore, Theorem \ref{T:tbound2} and  \eqref{breve} imply that the component of $\mB$ on $E_{-\lambda_i}$ is completely determined by the component  on $E_{\lambda_i}$.

Since $E_0$ is $J$-invariant, we see that $\left( \mB \right) _{|E_0} \in U(h_0)$ which has dimension  $h_0^2$. Putting these together implies that $\dim \mathscr{K}_p  = h_0^2 +  \sum\limits_i \frac{h_i(h_i-1)}{2}$ and hence
\[ \dim \mathscr{K} \leq 2n+1+h_0^2 +  \sum\limits_{i} \frac{h_i(h_i-1)}{2}.\]

\begin{cor}\label{C:psidim}
Suppose $M$ is an $H$-normal strictly pseudoconvex pseudohermitian manifold of dimension $2n+1$. Then
\[\dim \mathscr{K} \leq (n+1)^2 \]
\end{cor}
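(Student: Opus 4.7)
The plan is to specialize the general dimension estimate already derived in the paragraph preceding the corollary,
\[ \dim \mathscr{K} \leq 2n+1+h_0^2 +  \sum_{i} \frac{h_i(h_i-1)}{2}, \]
to the $H$-normal setting. The key observation is that $H$-normality forces the operator $\mathcal{T}^0_T$ to vanish identically, which collapses the eigenspace decomposition of $H_pM$ to a single piece.

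First I would recall the definition of $H$-normality: $\tor(HM,VM) \subseteq VM$. Since $\mathcal{T}^0_T(X) = \pi^0 \tor(T,X)$ and $\tor(T,X)$ is purely vertical by assumption, we have $\mathcal{T}^0_T \equiv 0$ at every point. Hence the only eigenvalue of $\mathcal{T}^0_T$ on $H_pM$ is zero, and the eigenspace decomposition degenerates to $H_pM = E_0$. In particular the dimension count gives $2h_0 = \dim H_pM = 2n$, so $h_0 = n$, while there are no summands corresponding to nonzero eigenvalues.

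Substituting $h_0 = n$ and omitting the remaining (empty) sum in the estimate above yields
\[ \dim \mathscr{K} \leq 2n + 1 + n^2 = (n+1)^2, \]
which is the claimed bound.

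There is essentially no obstacle here, as the entire analytic work was done in the discussion leading to the pre-corollary estimate; the corollary is simply the observation that the $J$-invariant eigenspace $E_0$ absorbs all of $H_pM$ under the $H$-normality hypothesis, and that $\left(\mB\right)_{|E_0}$ then lies in $U(h_0) = U(n)$, contributing exactly $n^2$ to the isotropy dimension while left-invariant translations contribute $2n+1$.
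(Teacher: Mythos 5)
Your proposal is correct and follows essentially the same route as the paper: the paper's proof is the one-line observation that $H_pM=E_0$ so $h_0=n$, which is exactly your point that $H$-normality kills $\mathcal{T}^0_T=\pi^0\tor(T,\cdot)$ and collapses the eigenspace decomposition, after which the pre-corollary estimate gives $2n+1+n^2=(n+1)^2$. (Only a cosmetic quibble: the $2n+1$ comes from the $T_pM$-component of the injection $\chi_p$ of Theorem \ref{T:Basic}, not from "left-invariant translations," since a general pseudohermitian manifold carries no group structure.)
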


\begin{proof} Since $H_pM = E_0$ for all $p \in M$, we can take $h_0 =n$.

\end{proof}

%\begin{rem}\label{R:eigen}
%If the eigenvalues, eigenspaces of $\mathcal{T}$are not constant as $p \in M$ varies, the result can be further refined. An $H$-Killing field that does not vanish at $p$, must define a curve through $p$ along which the eigenspace decomposition doesn't vary.  Thus the $2n+1$ can be replaced by the dimension of the largest sub-manifold of $M$ through $p$ for which the eigenspace decomposition is locally constant at $p$.
%\end{rem}

\subsection{The Lie group $SO(n)$.}

Let $G$ be the Lie group $SO(n)$ with Lie algebra identified with the space of skew-symmetric $n\times n$ matrices. For notational purposes let $E_{ij}$ represent the matrix with $+1$ at position $i,j$ and zeros everywhere else.

Now for $i>2$, let $X_{i}$ be the left-invariant vector field extending $E_{1i}-E_{i1}$ and declare $X_2,\dots,X_n$ to be an orthonormal basis for $HG$, the subbundle of $TG$ that they span.
Next for $1<i<j$ let $T_{ij}$ be the left invariant vector field extending $E_{ij}-E_{ji}$ and let $VG$ be the bundle spanned by all such $T_{ij}$. This defines a sRC-manfiold structure for $G$. Declaring $\{T_{ij} \colon 1<i<j \}$ to be an orthonormal basis for $VG$ defines a bi-invariant metric extension.

The bracket structure of $G$ is then given by
\begin{align*}
 [X_i, X_j] =-T_{ij},& \qquad i<j\\
 [X_i, T_{ij}] =  X_j,& \qquad [X_i, T_{ji} ] = -X_j\\
  [T_{ij},T_{ik}] = -T_{jk},& \qquad [T_{ij},T_{ki}] =T_{jk}
 \end{align*} 
 with all other brackets zero. It is then easy to check that with this structure $G$ is $H$-normal and that the metric extension is strictly normal. We can also easily compute that $\dim HG = n-1$, $\dim VG = \frac{(n-1)(n-2)}{2}$ and $\dim G = \frac{n(n-1)}{2}$. From Corollary \ref{C:dim}, we have the estimate
\begin{equation}\label{E:Oest} \begin{split} \dim \text{Iso}(G) &\leq \dim G + \dim \text{Iso}_e (M) \\
& \leq  \frac{n(n-1)}{2}  + \frac{(n-1)(n-2)}{2} = (n-1)^2 \end{split}\end{equation}
where $e \in G$ is the identity. Indeed, it is clear that left multiplication provides a $\frac{1}{2} n(n-1)$ dimensional family of $H$-isometries with no fixed points. Furthermore the left invariant vector fields $T_{ij}$  are themselves $H$-Killing fields. So the only question is whether the dimension $\text{Iso}_e(G)$ is actually equal to  $\frac{1}{2} (n-1)(n-2)$ . 

With respect to the obvious orthonormal frame for $HG$, the torsion matrix at any point is $\breve{T}$, the skew-symmetric $(n-1) \times (n-1)$ matrix with $\breve{T}_{ij} =T_{(i-1)(j-1)}$ for $1 \leq i<j \leq n$.
Torsion viewed as a map $H_p G \times H_p G \to V_pG$ thus has essentially the same structure as the wedge-product viewed as a map $H_pG  \times H_pG \to \Lambda^2 H_p G$. From this it is easy to see that $\Omega_p$ is the intersection of the unit ball in $V_pG$ with the space of decomposable elements. The symmetry group of this set is large and so we cannot use torsion to reduce the dimension of $\text{Iso}(G)$.

Now for $A \in T_eG $, we can consider the $1$-parameter subgroup $e^{tA}$ and define the conjugation subgroups of diffeomorphisms $\Phi_{A,t} \colon G \to G$ by $\Phi_{A,t} (g) =e^{tA} g e^{-tA}$. Using a Taylor expansion, we see that for  $y = e^{sB} \in G$ 
\[\Phi_{A,t} y= y + t[A,y] + \frac{t^2}{2} [A,[A,y]] + \frac{t^3}{3!} [A,[A,[A,y]]]+\dots\] 
and so at $e$
\begin{equation}\label{E:phi} (\Phi_{A,t})_* B =B+  t[A,B] + \frac{t^2}{2} [A,[A,B]] + \frac{t^3}{3!} [A,[A,[A,B]]]+\dots.\end{equation}
Since  any elements $A \in V_eG$ generates an $H$-Killing fields,  it is clear that for $A \in V_eG$ the pushforward  $(\Phi_{A,t})_*$ maps $H_eG \to H_eG$ and $V_eG \to V_eG$.  Now we can apply the observation that $\Phi_{A,t} \circ L_g  = L_{\Phi_{A,t}(g)} \circ \Phi_{A,t}$ to see that the maps $\Phi_{A,t}$ with $A \in V_eG$ preserve the splitting everywhere. As the standard metric is bi-invariant it now follows that for any $A \in V_eG$ and $t\in \mathbb{R}^{}$ that $\Phi_{A,t} \in \text{Iso}_e(G)$.

Now if $K_A \in \mathscr{K}_e$ is the corresponding $H$-Killing field corresponding to the subgroup $\Phi_{A,t}$, then differentiating \eqref{E:phi} appropriately, we see that for any left-invariant vector field $Y$
\[ [K_A,Y]_e = - [ A,Y_e] \]
where the bracket on the right is the Lie algebra bracket in $\mathfrak{o}(n)$. These are distinct for distinct elements $A \in V_eG$ and so we do indeed have $\dim \mathscr{K}_e =\frac{(n-1)(n-2)}{2}$ and so \eqref{E:Oest} is the optimal estimate. 

One consequence of this is that the groups $SO(n)$ fill a role in complemented sub-Riemannian geometry that they do not in standard Riemannian geometry. Namely, they are the model spaces of step $2$, compact, homogeneous sRC- manifolds with maximal symmetry groups and maximal vertical dimension.

\subsection{The Lie group $SL_n(\mathbb{R}^{})$.}

Let $G = SL_n(\mathbb{R}^{})$, the Lie group of $n \times n$ matrices with determinant $+1$.  The Lie algebra $\mathfrak{g}$ can then be identified with trace free $n \times n$ matrices. For $i \ne j$, let $X_{ij}$ be the left invariant vector field generated by $E_{ij}$ and for $i=1,\dots,n-1$ set $T_i$ to be the left invariant vector fields generated by $E_{ii}-E_{(i+1)(i+1)}$. Define an sRC-structure on $G$ by letting $X_{ij}$, $i\ne j$ be a global orthonormal frame for $HG$ and let $T_1,\dots,T_{n-1}$ be a global frame for $VG$.   Then $\dim HG = n(n-1)$ and $\dim VG = n-1$. 

Left multiplication then provides a transitive family of sRC-isometries and so $G$ is homogeneous. Then Corollary \ref{C:dim}, estimates
\[ \dim \text{Iso}_e (G) \leq  \frac{1}{2} n(n-1)( n^2-n-1).\] 
In this instance however, we shall be able to greatly reduce this estimate using torsion. 

The only Lie brackets of horizontal left invariant vector fields that produce vertical terms are (with $i<j$)
\[ [X_{ij} ,X_{ji}] = E_{ii}-E_{jj} = \sum\limits_{k=i}^{j-1} T_k .\]
Therefore there is a weak torsion decomposition of $H_eG$ into two dimensional blocks. By Corollary \ref{C:2dsplit}, the Lie group $\text{St}(\Omega_p^1)$ is therefore discrete. Therefore for any $U \in V^1_e$ and $K \in \mathscr{K}_e$ we must have $[\mB , \mathcal{T}_U^0]=0$. 
Now 
\[ [T_i, X_{jk}]= (\delta_{ij} - \delta_{(i+1)j} -\delta_{ik} + \delta_{(i+1)k}) X_{jk}.\]
An immediate consequence of this is that $\mathcal{T}_U^0$ is diagonal for any $U \in V_e^1$ with respect to orthonormal basis above. This means that all these operators commute and hence $\mB$ must preserve the intersections of the eigenspaces of all operators $\mathcal{T}_U^0$. It is straightforward to verify that each $X_{ij}$ generates an intersection of eigenspaces and so $\mB$ itself must be diagonal. As $\mB$ must also be skew-adjoint, we see that $\mB =0$ for all $K \in \mathscr{K}$. Hence $\text{Iso}(G)$ is isomorphic to a finite number of disjoint copies of  $G$ itself.

\subsection{The rototranslation group.}

A complemented sub-Riemannian manifold that arises from problems in neurobiology and computer imaging is the rototranslation group, (see for \cite{CS} and \cite{HP} ). This is the group $G = \mathbb{R}^{2} \times \mathbb{S}^{1}$ with group structure
\[ (a,b,\gamma) \cdot (x,y, \theta)  = (a+ x \cos \gamma - y \sin \gamma, b+ y\cos \gamma+ x\sin \gamma, \gamma + \theta).\]
The identity is $e =(0,0,0)$ and
\[ (x,y,\theta)^{-1} = ( -x\cos \theta- y\sin \theta,x \sin \theta-y \cos \theta,-\theta) .\]
An sRC-structure is created by setting $X,\Theta$ to be an orthonormal frame for $HG$ and $T$ a global frame for $VG$ where
\begin{align*}
X &= \cos \theta \pd{}{x} + \sin \theta \pd{}{y}, \quad 
\Theta  = \pd{}{\theta}, \quad T  = \sin \theta \pd{}{x} - \cos \theta \pd{}{y}.
\end{align*}
It is easy to check that this frame and the metric on $HG$ is left-invariant. The bracket structure on $G$ is then
\[ [X,\Theta]=T, \qquad [X,T]=0, \qquad [\Theta,T] =X.\]
From this we can compute the connection and torsion as
\begin{align*}
\nabla_X \Theta &=0, \qquad \nabla_\Theta X = 0, \qquad \nabla T =0,\\
\nabla_T X &= \Theta/2, \qquad \nabla_T \Theta = -X/2\\
\tor(T,\Theta) &= X/2, \qquad \tor(T,X)= \Theta/2, \qquad \tor(X,\Theta) = -T.
\end{align*}
As $VG$ is 1-dimensional, the rototranslation group is vertically discrete at step $2$ and so the operator $\mathcal{T} (\cdot) = \tor(T,\cdot)$ commutes with $\mB$ on $H_pG$ for any $K \in \mathscr{K}_p$. Now $\mathcal{T}$ has eigenvalues $\pm 1/2$ and so splits $H_pG$ into two 1-dimensional eigenspaces. Hence $\mB =0$ at $p$. 

It can then be checked by direction computation that there is a  global frame of non-vanishing Killing fields corresponding to the right-invariant vector fields.
\begin{align*}
\widehat{X}  & = \pd{}{x}  = \cos \theta X +\sin \theta T, \quad  \widehat{Y}  = \pd{}{y} = \sin \theta X -\cos \theta T\\
\widehat{\Theta}& = \pd{}{\theta} -y \pd{}{x} + x \pd{}{y} = \Theta + (x \sin \theta -y \cos \theta)X -(y \sin \theta + x \cos \theta)T
\end{align*}
Thus $\mathscr{K}$ is $3$-dimensional and spanned by the list above. 

In fact, similar to the Heisenberg groups, we can make a stronger statement. Suppose $K =a X + b \Theta + c T$ is a weak $H$-Killing field then
\begin{align*}
 b+Xc&= 0, \qquad -a+\Theta c=0 \\
 Xa=\Theta b&=0, \qquad Xb+\Theta a+c =0.
\end{align*}
Then 
\[ Tc = X\Theta c - \Theta Xc = Xa + \Theta b  =0\]
Now
\[ Tb = -TX c = -XT c =0, \qquad Ta = T \Theta c = \Theta T c -Xc =  b. \]
Since
\[ [T,K] = (Ta -b)X + Tb \; \Theta + Tc \; T \]
we immediately see that $K$ is a strong $H$-Killing field. Since $G$ admits a transitive family of non-vanishing $H$-Killing fields, we must therefore have  $\mathscr{K}^* = \mathscr{K}$.

\bibliographystyle{plain}
\bibliography{References}
\end{document}